\numberwithin{equation}{section}
\newtheorem{thm}{Theorem}[section]
\newtheorem{cor}[thm]{Corollary} 
\newtheorem{lem}{Lemma}[section]
\newtheorem{prop}{Proposition}[section]
\theoremstyle{definition}
\newtheorem{defn}{Definition}[section]
\theoremstyle{remark}
\newtheorem{rem}{Remark}[section]
\newtheorem{cond}{Condition}[section]
\newcommand{\condt}{a}
\newcommand{\condcor}{\gamma}
\newcommand{\pare}[1]{\left(#1\right)}
\newcommand{\Pare}[1]{\big(#1\big)}
\newcommand{\abs}[1]{\left\lvert #1 \right\rvert}
\newcommand{\RR}{\mathbb{R}}
\newcommand{\Ss}{\mathbb{S}}
\newcommand{\CC}{\mathbb{C}}
\newcommand{\im}{\mathtt{i}}
\newcommand{\In}{\mathrm{in}}
\newcommand{\Sc}{\mathrm{sc}}
\DeclareMathOperator{\supp}{\mathrm{supp}}
\DeclareMathOperator{\Id}{\mathrm{Id.}}
\begin{document} 
\title{On Corner Scattering for Operators of Divergence Form and Applications to Inverse Scattering}
\author{ Fioralba Cakoni \ and Jingni Xiao\footnote{Department of Mathematics, Rutgers University, Piscataway, NJ 08854-8019, USA,
 emails:  fc292@math.rutgers.edu,  jingni.xiao@rutgers.edu }}
\date{}

\maketitle

	
\begin{abstract}
	We consider the scattering problem governed by the Helmholtz equation with inhomogeneity  in both ``conductivity'' in the divergence form and  ``potential'' in the lower order term.
	The support of the inhomogeneity is assumed to contain a convex corner. We prove that, due to the presence of such corner under appropriate assumptions on the potential and  conductivity in the vicinity of the corner,  any incident field scatters. Based on corner scattering analysis we present  a uniqueness  result on determination of  the polygonal convex hull of the support of admissible inhomogeneities, from scattering data corresponding to one single incident wave. These results require  only certain regularity around the corner for the coefficients modeling the  inhomogeneity, whereas away from the corner they can be quite general. Our main results on scattering and inverse scattering are established for $\RR^2$, while some analytic tools are developed in any dimension $n\geq 2$.
\end{abstract}

\noindent{\bf Key words:}  Inverse medium scattering, corner scattering, transmission eigenvalues, non-scattering wave numbers, shape determination

\noindent{\bf AMS subject classifications:} 35R30, 35J25, 35P25, 35P05, 81U40

\section{Introduction}
The existence of non-scattering wave numbers (otherwise referred to as frequencies or energies) in the scattering by inhomogeneous media, remains a perplexing question despite the recent progress starting with the pioneering paper \cite{BPS14}. A non-scattering wave number for a given inhomogeneity corresponds to the frequency for which there exists an incident wave that is not scattered by the media. It is easily seen that non-scattering wave numbers, if exist, are examples of the so-called transmission eigenvalues \cite{CakoniColtonHaddar2016}. The latter are the eigenvalues of a non-selfajoint eigenvalue problem with a deceptively simple formulation, given by two different elliptic equations in a bounded domain that coincides with the support of  inhomogeneity and sharing the same Cauchy data on the boundary. It has been shown that, under suitable assumptions on the contrast of scattering media, real transmission eigenvalues exist \cite{CakGinHad} and they can be seen in the scattering data \cite{CakColHad,KiLe}. However, for a transmission eigenvalue to be non-scattering wave number, one must be able to extend the part of the transmission eigenfunction corresponding to the background  equation as a solution to the background equation in the entire space, which is not a trivial question in general.  It is well-known that real transmission eigenvalues corresponding to a spherically stratified inhomogeneity are non-scattering wave numbers \cite{CoM88} and furthermore, all transmission eigenvalues uniquely determine the spherically stratified refractive index \cite{colyj}. However beyond the case of spherically stratified media, there is no other known type of bounded supported inhomogeneities for which non-scattering wave numbers are proven to exist. We remark that the existence of non-scattering waves has been observed in scattering problems for waveguides \cite{BoChe}. The connection between transmission eigenvalues and non-scattering energies is also studied in  some cases of hyperbolic geometries \cite{BlV18,CaCh}.

The absence of non-scattering wave numbers was first shown in \cite{BPS14} for inhomogeneities whose support contains a right corner.  {It was further studied in  \cite{PSV17} for convex conic corners, in \cite{ElH15} for 2D corners and 3D edges, in \cite{ElH18} for more general corners and edges, in \cite{LHY18} for weakly singular interfaces in 2D, in \cite{CDH18} for conductive boundary problem and in \cite{Bla18} for the source problem.} Further, in \cite{BlL18} authors proved a stability type of result which connects the local curvature with the scattering amplitude. Recently, in  \cite{BLX19arXiv} and \cite{LiX17Corner} the corner scattering investigation is extended to electromagnetic inhomogeneous scattering problems. The fact that corners and edges always scatter is employed to prove that the far field pattern corresponding to one single incident wave uniquely determines the support of  a convex polygonal inhomogeneous media, see e.g. \cite{BlL16arXiv}, \cite{ElH18} and \cite{HSV16}. Related studies  \cite{BLLW17} and \cite{BlL17}  discuss the properties of the transmission eigenfunctions and their possible extension in a neighborhood of a corner.   
We would also like to mention that there are several works on propagation of singularities for solutions of the wave equation in manifolds with conic and other types of singularities, using microlocal analysis related techniques (see, e.g, \cite{Leb97,MeW04,Vas08} and the references therein). However our choice of  the approach here is determined by particularity of the question under investigation. More specifically, we are concerned with the existence of non-scattering frequencies, which is related to the behavior of eigenfunctions of the nonstandard transmission eigenvalue value problem. Hence, we do not simply analyze the scattering phenomena near a corner,  but rather our problem becomes whether certain solutions of elliptic partial differential equations can be extended outside a corner \cite{Lew59}.
Moreover, our analysis applies to $L^\infty$ coefficients and our results have applications in inverse scattering problems.

In this paper we undertake a study of corner scattering for the scalar scattering problem corresponding to inhomogeneities with contrast in both the main operator and the lower term.  {For notational simplicity, with an abuse of terminology though, we call ``conductivity" the coefficient in the main operator and ``potential" the coefficient in the lower term, throughout the paper. We prove that,
any incident wave produces non-zero scattered field in the exterior of the inhomogeneity, providing the existence of a corner at the support of the potential with non-zero contrast  where the conductivity contrast vanishes to the second order at the vertex. In addition we show that if the aperture of the corner is an irrational factor of $\pi$, we have the same nontrivial scattering result for all incident waves. Otherwise, if the conductivity has  nontrivial contrast at the corner,  or the conductivity  contrast goes to zero slower than second order at the vertex, we need to exclude a certain class of incident fields from our results. For more detailed statements we refer the reader to Theorems \ref{thm:cornerscatt} and \ref{thm:cornerscattPot} in the paper. As an application of corner scattering we discuss an approximation property  of transmission eigenfunctions by Herglotz wave functions in the presence of corners on the support of the inhomogeneity, providing more insight to this  issue already discussed in \cite{BLLW17} and \cite{BlL17}.}  

{Another main result of our paper concerns the inverse scattering problem of shape determination for inhomogeneities, for which the uniqueness is proven by applying  corner scattering analysis.  We show that scattering data corresponding to a single incident field uniquely determines the polygonal convex hull of the support of the inhomogeneity under appropriate assumptions on conductivity and potential contrasts at the corners of the polygonal. In particular, Theorem~\ref{th:uni} states the uniqueness result for inhomogeneities whose polygonal convex hull has potential jump at all corners and at the vertices the conductivity contrast vanishes to the second order. However, we remark that this uniqueness result is  in fact valid for other types of inhomogeneities. For example, we could allow that all corners of the polygonal convex hull where the conductivity has a  jump, have aperture as irrational factor of $\pi$. More generally, if two inhomogeneities lead to the same scattering data when probed by the same incident wave, we can conclude that the difference between the two convex hulls cannot contain certain types of corners.}

{Our results generalize the previous work on corner scattering and shape determination in  \cite{BPS14,ElH15,ElH18,PSV17,BlL16arXiv,HSV16}, where the authors consider only the case when the conductivity is identically one in the whole space. In particular, this is a special case  of our setting where the contrast of the conductivity vanishes to second order at the corner. Nevertheless, we  recall that  here we do not assume any additional properties of the conductivity away from the corner, besides basic ellipticity and boundedness requirements for the forward problem, making our setting much more general. For example, we allow inhomogeneities with disconnected support or with voids inside  (see e.g. Figures~\ref{conf}, \ref{adm} and \ref{uni}, for a visualization of the support of admissible inhomogeneities), or even anisotropic materials could be allowed  away from the corners.  
The setting where the conductivity possesses contrast at the corner is a novelty of this study and it presents interesting questions related to potential exclusive incident waves for special corners, which  calls for deeper understanding.

{Finally, we remark that our approach is based on asymptotic analysis on the integrals appearing in an identity which is obtained as consequence of the non-scattering phenomenon. In order to do so, it is of fundamental importance to construct the so-called Complex Geometric Optics (CGO) solutions with desired estimates for the corresponding differential operator. We develop this analytical framework for any dimension $n\geq 2$, including the construction of CGO solutions as well as the derivation of asymptotic estimates on the integrals. However,  in the analysis of corner scattering we restrict ourselves to ${\mathbb R}^2$, avoiding technicalities that higher dimensions present in a key point of our analysis, namely deducing the strictly non-zero asymptotic behavior of a certain integral.}

The paper is organized as follows. Having formulated the problem in the next section, Sections \ref{cgo} and \ref{sec:corner} are devoted to the construction of CGO solutions for the considered problem and their use to analyze the behavior of solutions of the transmission eigenvalue problem in the vicinity of a generalized corner  both in ${\mathbb R}^2$ and ${\mathbb R}^3$. In Section \ref{non-scat} we restrict ourselves to the two-dimensional case, and provide a comprehensive analysis of  ``conductivity" and ``potential'' corner scattering in Theorems~\ref{thm:cornerscatt} and Theorem \ref{thm:cornerscattPot}}, respectively. Section \ref{applic} is devoted to the aforementioned applications of corner scattering to inverse scattering theory.

\section{Formulation of the scattering problem}\label{form}
We consider the scattering problem governed by
\begin{equation}\label{eq:MainGov1}
\nabla\cdot \condt \nabla u + k^2 c u=0\quad\mbox{in $\RR^n$},
\end{equation}
where the total field $u:=u^{\In}+u^{\Sc}\in H^1_{loc}(\RR^n)$ is composed of the incident field $u^{\In}$ and the scattered field $u^{\Sc}$.
The incident field satisfies the Helmholtz equation
\begin{equation}
\Delta u^{\In}+k^2u^{\In}=0\quad\mbox{in $\RR^n$},
\end{equation} 
and the scattered field satisfies the Sommerfeld radiation condition
\begin{equation}\label{eq:Radiat}
\hat{x}\cdot\nabla u^{\Sc}-\im k u^{\Sc}=o ( |x|^{-\frac{n-1}{2}} )
\end{equation} 
uniformly with respect to $\hat x=x/|x|$. The coefficients $\condt$ and $c$ in \eqref{eq:MainGov1} representing the constitutive material properties of the media, are real valued scalar $L^{\infty}$ functions satisfying
\begin{equation}\label{eq:gammEllip}
 \condt(x)\geq a_0>0
 \qquad \mbox{for almost all $x\in\RR^n$},
\end{equation}
with a constant $a_0$ and 
\begin{equation}\label{eq:gammBound}
\supp (c-1) \cup  \supp (\condt-1)  \subseteq \overline{\Omega},
\end{equation}
where $\Omega$ is a simply connected bounded region in $\RR^n$, i.e. the inhomogeneity is included in $\Omega$ and in the background media the constitutive material properties are $a=1$ and $c=1$. We sometimes denote such an inhomogeneity as $(a, c, \Omega)$, despite the fact that the specific domain $\Omega$ could be chosen differently. Note that (\ref{eq:MainGov1}) implicitly contains the continuity of the field and co-normal derivative wherever $a$ jumps.

\noindent
The far field pattern $u^\infty(\hat x)$ of the scattered field $u^s$ is defined via the following asymptotic expansion of the scattered field 
\[
u^{\Sc}(x)=\frac{\exp(\im k|x|)}{|x|^{\frac{n-1}{2}}}u^\infty(\hat x)+O\left(\frac{1}{|x|^{\frac{n+1}{2}}}\right),\;r\to\infty
\]
where $\hat x=x/|x|$ (c.f.~\cite{CakoniColtonHaddar2016}). 
We are particularly interested in the situation when the support of the contrast $\condt-1$ and/or $c-1$ has a corner at its shape. We would like to show that when there is such a corner, then for any incident field $u^{\In}$, the scattered wave $u^{\Sc}$ cannot vanish identically outside any region containing $\Omega$, or equivalently, the far-field aplitude $u^{\infty}$ cannot be trivially zero. 
Notice that this problem, in general, cannot be transformed into the parallel study on corner scattering  for the problem governed by
\begin{equation}
\Delta v + k^2\left(\frac{c}{\condt}-\frac{1}{k^2}\frac{\Delta\condt^{1/2}}{\condt^{1/2}}\right) v=0\quad\mbox{in $\RR^n$},
\end{equation}
by the dependent change of variables $v:=\condt^{1/2}u$, since it will introduce new singularities in the above equation. Later on, we impose  some regularity assumptions on the coefficients $a$ and $c$ in a neighborhood of the corner. Nevertheless we allow for jumps on $a$ and $c$ across the boundary of $\supp(a-1)\cup\supp(c-1)$. In fact, away from the corner, $a$ can be even $L^\infty$ positive-definite matrix-valued function.

\subsection{A consequence of the non-scattering phenomenon}
We assume that for a given inhomogeneity with constitutive parameters  $\condt(x)$ and $c(x)$ there exists a nontrivial incident field $u^{\In}$ which would not be perturbed by $\condt$ and $c$ when observed by a far-field observer. 
In this case, the corresponding far-field pattern $u^{\infty}$ will vanish identically or equivalently, the scattered field $u^{\Sc}$ will be zero in the exterior of any simple-connected Lipschitz domain $\Omega$ enclosing $\supp (c-1) \cup  \supp (\condt-1)$ (see Figure \ref{conf}).
As a consequence, the following transmission eigenvalue problem is satisfied 
\begin{eqnarray}
\nabla\cdot \condt \nabla u + k^2 c u=0,&  \Delta v + k^2 v=0,&\quad\mbox{in $\Omega$},\label{eq:ITPpde}\\
u=v,&  \condt\partial_{\nu} u=\partial_{\nu}v,&\quad\mbox{on $\partial\Omega$},\label{eq:ITPbc}
\end{eqnarray}
with $v:=u^{\In}$, where $\nu$ is the outward unit normal to $\partial \Omega$.

\begin{figure}[h]
	\centerline{
		\includegraphics[width=0.22\textwidth]{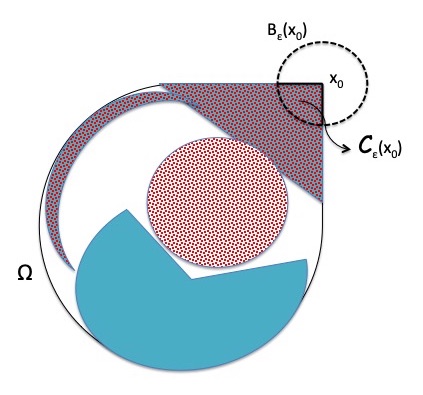}}
	\caption{Sketch of an inhomogeneity satisfying our assumptions. Dotted filling depicts $\supp(c-1)$, uniform coloring  depicts $\supp(a-1)$, while darker dotted filling depicts the  support of $\supp(c-1)\cap \supp(a- 1)$. The domain $\Omega$ must contain a corner for the contrast $c-1$ or/and $a-1$.}
	\label{conf}
\end{figure}

\begin{lem}\label{lem:Int}
	If $u$ and $v$ satisfy \eqref{eq:ITPpde}, then one has
	\begin{equation*}
	\int_{\Omega}\pare{\condt-1} \nabla v\cdot\nabla w\, dx -k^2\int_{\Omega}\pare{c-1} v w\, dx
	=\int_{\partial\Omega}\condt\partial_{\nu}w\pare{v-u}-w\pare{\partial_{\nu}v-\condt\partial_{\nu}u}\, ds
	\end{equation*}
	for any solution $w$ to
	\begin{equation*}
	\nabla\cdot \condt \nabla w + k^2c w=0,\quad \mbox{in $\Omega$}.
	\end{equation*}
\end{lem}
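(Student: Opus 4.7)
The plan is to derive the identity by pure integration by parts, splitting the contrast terms $(\condt-1)$ and $(c-1)$ into ``coefficient'' and ``background'' pieces, and then inserting $u$ into the boundary term using Green's second identity applied to $u$ and $w$.

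First I would write
\[
\int_{\Omega}(\condt-1)\nabla v\cdot\nabla w\,dx
=\int_{\Omega}\condt\nabla v\cdot\nabla w\,dx-\int_{\Omega}\nabla v\cdot\nabla w\,dx,
\]
and apply Green's first identity to each piece. For the first term, integration by parts (moving derivatives off $v$) together with the equation $\nabla\cdot\condt\nabla w=-k^{2}cw$ gives
\[
\int_{\Omega}\condt\nabla v\cdot\nabla w\,dx
=\int_{\partial\Omega}v\,\condt\partial_{\nu}w\,ds+k^{2}\!\int_{\Omega}cvw\,dx.
\]
For the second term, using $\Delta v=-k^{2}v$, one has
\[
\int_{\Omega}\nabla v\cdot\nabla w\,dx
=\int_{\partial\Omega}w\,\partial_{\nu}v\,ds+k^{2}\!\int_{\Omega}vw\,dx.
\]
Subtracting these two identities yields
\[
\int_{\Omega}(\condt-1)\nabla v\cdot\nabla w\,dx-k^{2}\!\int_{\Omega}(c-1)vw\,dx
=\int_{\partial\Omega}\bigl(\condt\partial_{\nu}w\cdot v-w\,\partial_{\nu}v\bigr)\,ds,
\]
which is close to the target identity but involves $v$ instead of the difference $v-u$.

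Next I would use that $u$ and $w$ both solve $\nabla\cdot\condt\nabla(\cdot)+k^{2}c(\cdot)=0$ in $\Omega$. Applying Green's second identity (for the divergence-form operator) to the pair $(u,w)$ produces
\[
\int_{\partial\Omega}\bigl(u\,\condt\partial_{\nu}w-w\,\condt\partial_{\nu}u\bigr)\,ds
=\int_{\Omega}\bigl(u\,\nabla\cdot\condt\nabla w-w\,\nabla\cdot\condt\nabla u\bigr)\,dx=0,
\]
because the bulk integrand is $-k^{2}cuw+k^{2}cwu\equiv 0$. I would then rewrite
\[
\int_{\partial\Omega}v\,\condt\partial_{\nu}w\,ds
=\int_{\partial\Omega}(v-u)\,\condt\partial_{\nu}w\,ds+\int_{\partial\Omega}u\,\condt\partial_{\nu}w\,ds,
\]
and substitute the vanishing Green's identity above to replace $\int_{\partial\Omega}u\,\condt\partial_{\nu}w\,ds$ by $\int_{\partial\Omega}w\,\condt\partial_{\nu}u\,ds$. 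Collecting the boundary pieces produces precisely the claimed right-hand side
$\int_{\partial\Omega}\condt\partial_{\nu}w(v-u)-w(\partial_{\nu}v-\condt\partial_{\nu}u)\,ds$.

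There is no real obstacle beyond bookkeeping; the only point to watch is the justification of Green's identity in the divergence-form case when $\condt$ is merely $L^{\infty}$. This is standard: $u,v,w\in H^{1}(\Omega)$, $\condt\nabla u$ and $\condt\nabla w$ are in $H(\mathrm{div},\Omega)$ by virtue of the PDEs they satisfy, so the integration by parts is interpreted in the usual duality pairing between $H^{1/2}(\partial\Omega)$ and $H^{-1/2}(\partial\Omega)$, and all manipulations above are rigorous.
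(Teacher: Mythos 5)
Your proposal is correct and follows essentially the same route as the paper: Green's identity applied to the pair $(v,w)$ using the Helmholtz equation for $v$ and the divergence-form equation for $w$, combined with the symmetric Green's identity for the pair $(u,w)$ (both solving $\nabla\cdot\condt\nabla(\cdot)+k^2c(\cdot)=0$) to trade $\int_{\partial\Omega}u\,\condt\partial_{\nu}w\,ds$ for $\int_{\partial\Omega}w\,\condt\partial_{\nu}u\,ds$. The only cosmetic difference is the order of the two steps, and your closing remark on interpreting the boundary terms via the $H^{1/2}$--$H^{-1/2}$ pairing is a sound (if implicit in the paper) justification.
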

\begin{proof}
	Since $u$ and $w$ are both solutions to the same equation, we have from the Green's formula that
	\begin{equation*}
	\int_{\partial\Omega}\condt u \partial_{\nu}w\, ds
	=\int_{\Omega}\condt\nabla u\cdot \nabla w-k^2c uw \, dx
 	=\int_{\partial\Omega}\condt w \partial_{\nu}u\, ds.
	\end{equation*}
	Similarly we have
	\begin{equation*}
	\begin{split}
	\int_{\Omega}\condt\nabla v\cdot \nabla w \,dx&- k^2\int_{\Omega}c vw\,dx=\int_{\partial\Omega}\condt v \partial_{\nu}w\,ds
	\\&=\int_{\Omega}\nabla v\cdot \nabla w\,dx- k^2\int_{\Omega} vw\,dx
	-\int_{\partial\Omega}w \partial_{\nu}v\,ds+\int_{\partial\Omega}\condt v \partial_{\nu}w\, ds,
	\end{split}
	\end{equation*}
	where in the second identity we have used that $v$ satisfies the Helmholtz equation. 
	It is hence obtained that
	\begin{equation*}
	\begin{split}
	\int_{\Omega}\pare{\condt-1} \nabla v\cdot\nabla w\, dx& -k^2\int_{\Omega}\pare{c-1} v w\, dx
	=\int_{\partial\Omega}\condt v \partial_{\nu}w-w \partial_{\nu}v
	\\&=\int_{\partial\Omega}\condt\partial_{\nu}w\pare{v-u}-w\pare{\partial_{\nu}v-\condt\partial_{\nu}u}\, ds.
	\end{split}
	\end{equation*}
\end{proof}
The identity in the above lemma is a  fundamental tool in our forthcoming analysis of the scattering by corners.
{Later in Section \ref{sec:corner}, we analyze in details these integrals near the corner, i.e. in $B_{\varepsilon}(x_0)\cap \Omega$ as shown in Figure~\ref{conf}.}
\section{Complex rapidly decaying solutions}\label{cgo}
In this section, we shall seek for solutions to the equation
\begin{equation}\label{eq:PDE}
\nabla\cdot \gamma \nabla w + k^2 \rho w=0\quad\mbox{in $\RR^n$},
\end{equation}
which are of the form
\begin{equation}\label{eq:CGO}
w=w_{\tau}=\gamma^{-1/2}(1+r(x))e^{\eta\cdot x}.
\end{equation}
\noindent
Here, $\eta\in\CC^n$ is defined as
\begin{equation}\label{eq:eta}
\eta=-\tau \pare{d+\im d^{\perp}},
\end{equation} 
with $d,d^{\perp}\in \Ss^{n-1}$ satisfying $d\cdot d^{\perp}=0$. These solutions are referred to in the literature as Complex Geometrical Optics (CGO) solutions. 
Given $s\in\RR$ and $p\geq 1$, we recall the Bessel potential space $$H^{s,p}:=\{f\in L^p(\RR^n);\ \mathcal{F}^{-1}[(1+|\xi|^2)^{s/2}\mathcal{F}f]\in L^{p}({\mathbb R}^n)\},$$ 
where $\mathcal{F}$ and $\mathcal{F}^{-1}$ denote the Fourier transform and its inverse, respectively. 

\begin{cond}\label{cond1}
Given $s\in \RR$, the coefficients $\gamma$ and $\rho$ are such that $q:=\gamma^{-1/2}\Delta \gamma^{1/2}-k^2\rho\gamma^{-1}\in H^{s,\tilde{p}}$ and
\begin{equation}\label{eq:condqf}
\|qf\|_{H^{s,\tilde{p}}}\le C_1\|f\|_{H^{s,p}}\quad \mbox{for any $f\in H^{s,p}$},
\end{equation}
with some  $1<\tilde{p}<2$ satisfying 
\begin{equation}\label{eq1}
2/(n+1)+1/p\le 1/\tilde{p}< 2/n+\min\{1/p, s/n\}.
\end{equation}
	\end{cond}
\noindent
Some instances of feasible choices for the  parameters $s,p,\tilde p$  such that  (\ref{eq1}) holds are given next. For $s=0$ in ${\mathbb R}^2$ for any $p>3$ one can find  $1<\tilde{p}<2$, whereas in ${\mathbb R}^3$ a number $\tilde p$ in the interval $(3/2, 2)$ can be chosen for any $p>6$. If  $s=1$ is chosen, then for any $p>6/5$  in ${\mathbb R}^2$ and any  $p>2$ in ${\mathbb R}^3$ one can find  $1<\tilde{p}<2$. We will be more specific when we apply the result from this section to our inverse scattering problem.
	
\noindent
In order to see the relevance of introducing the function $q$ in Condition \ref{cond1}, we note that if we let $v:=\gamma^{1/2} w$ then $v$ satisfies
$$\Delta v -qv=0\quad\mbox{in $\RR^n$}.$$
Thus we can make use of existing results on the CGO solutions for the above equation.
\begin{prop}\label{prop:CGO}
	Given $n=2,3$, $s\in\RR$, let $\gamma$ satisfy Condition \ref{cond1} with the constant $p$ subjected to $p>1+2/(n-1)$ and $n/p<2/(n+1)+s$. Then for any $\tau>0$ large enough, 
	there is a solution $w$ to \eqref{eq:PDE} which is of the form \eqref{eq:CGO} with the residual $r$ satisfying 
	\begin{equation}\label{eq:CGOesti_tau}
	\|r\|_{H^{s,p}}=O(\tau ^{s-n/p-\sigma}),
	\end{equation} 
	with a constant $\sigma>0$ independent of $\tau$ and $r$.
\end{prop}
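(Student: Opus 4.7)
The plan is to reduce the problem to the Schrödinger form and then construct $r$ by a contraction argument based on $\tau$-dependent estimates for the Faddeev-type resolvent of the conjugated Laplacian. Setting $v:=\gamma^{1/2}w$, the equation \eqref{eq:PDE} becomes $\Delta v - qv = 0$ in $\RR^n$, where $q$ is precisely the function in Condition~\ref{cond1}. Writing $v=(1+r)e^{\eta\cdot x}$ and using that $\eta\cdot\eta=0$ (which follows from $|d|=|d^\perp|=1$ and $d\cdot d^\perp=0$), a direct computation yields that $r$ must satisfy
\begin{equation*}
(\Delta + 2\eta\cdot\nabla)\,r \;=\; q(1+r)\qquad\text{in }\RR^n.
\end{equation*}

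Next I would introduce the Faddeev-type solution operator $G_\eta$ for $\Delta + 2\eta\cdot\nabla$, defined via the Fourier multiplier $-1/(|\xi|^2 + 2\im\eta\cdot\xi)$ with a standard choice of integration contour, and collect its mapping properties. The classical Sylvester--Uhlmann $L^2$-weighted bound, extended to $L^{\tilde p}\to L^{p}$ by Kenig--Ruiz--Sogge (and later refinements), gives an estimate of the form
\begin{equation*}
\|G_\eta f\|_{H^{s,p}}\;\le\; C\,\tau^{s - n/p + n/\tilde p - 1}\,\|f\|_{H^{s,\tilde p}},
\end{equation*}
valid precisely when $(p,\tilde p)$ satisfies $2/(n+1)+1/p\le 1/\tilde p$ together with the admissibility conditions $p>1+2/(n-1)$ and $n/p<2/(n+1)+s$. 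The commutation of $G_\eta$ with the Bessel potential $(1-\Delta)^{s/2}$ at the Fourier multiplier level is what lets us pass from $L^p$ to $H^{s,p}$ here.

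With these ingredients in place, I would set $T r := G_\eta\bigl(q(1+r)\bigr)$ and look for a fixed point in a small ball of $H^{s,p}$. By Condition~\ref{cond1} the multiplier $q$ sends $H^{s,p}\to H^{s,\tilde p}$, so combining with the resolvent estimate,
\begin{equation*}
\|Tr_1 - Tr_2\|_{H^{s,p}}\;\le\; C\,\tau^{s-n/p+n/\tilde p-1}\|r_1-r_2\|_{H^{s,p}}.
\end{equation*}
The hypothesis $1/\tilde p<2/n+\min\{1/p,s/n\}$ forces the exponent $s-n/p+n/\tilde p-1<0$ (and the companion constraint ensures the free term $G_\eta q$ already lies in $H^{s,p}$ with the desired decay), so for all sufficiently large $\tau$ the operator $T$ is a strict contraction, and Banach's fixed point theorem produces a unique $r$ with $\|r\|_{H^{s,p}}\lesssim\|G_\eta q\|_{H^{s,p}}=O(\tau^{s-n/p+\sigma})$ for some $\sigma>0$ arising from the strict inequalities in \eqref{eq1}. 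Setting $w=\gamma^{-1/2}(1+r)e^{\eta\cdot x}$ and unwinding the Liouville transform yields the claimed CGO solution.

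The main obstacle is the exponent bookkeeping in the resolvent estimate: one must ensure that the $\tau$-gain from $G_\eta$ strictly dominates any $\tau$-loss coming from $q(1+r)$ and from the $H^{s,p}$ norm itself. This is exactly what the two-sided constraint \eqref{eq1} is engineered to deliver, but verifying it requires invoking an $L^{\tilde p}\to L^{p}$ (not merely $L^2$) bound for the Faddeev resolvent and then lifting it to Bessel potential spaces while preserving the decay rate in $\tau$; the admissible ranges of $(p,\tilde p)$ listed in the remark after Condition~\ref{cond1} are precisely those for which these estimates are known.
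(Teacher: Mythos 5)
Your overall strategy coincides with the paper's: reduce to $(\Delta+2\eta\cdot\nabla)r=q(1+r)$, invoke the Kenig--Ruiz--Sogge--based $H^{s,\tilde p}\to H^{s,p}$ bound for the Faddeev-type solution operator (this is exactly Lemma~\ref{lem:CGO_PSV}, quoted from \cite{PSV17}), and solve by iteration --- your contraction for $Tr=\mathscr{G}_{\eta}\bigl(q(1+r)\bigr)$ is the Neumann-series construction $r=\pare{\Id-\mathscr{G}_{\eta}q}^{-1}\mathscr{G}_{\eta}q$ written in fixed-point language. So there is no difference of approach; the problem is the exponent you assign to the resolvent estimate.

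The correct bound, and the one the paper uses, is
\begin{equation*}
\|\mathscr{G}_{\eta} f\|_{H^{s,p}}\ \lesssim\ \tau^{-\left(2-n(1/\tilde{p}-1/p)\right)}\,\|f\|_{H^{s,\tilde{p}}},
\end{equation*}
whose exponent is independent of $s$ (the Bessel potential commutes with the Fourier multiplier, so the $L^{\tilde p}\to L^p$ rate lifts unchanged); its negativity follows at once from $1/\tilde{p}-1/p<2/n$, i.e.\ from the right half of \eqref{eq1}. You instead assert the gain $\tau^{s-n/p+n/\tilde{p}-1}$ and claim that \eqref{eq1} forces this to be negative. It does not: since $\tilde{p}<2<p$ for the admissible parameters, one has $n/\tilde{p}-n/p>0$, so for $s=1$ your exponent equals $n/\tilde{p}-n/p>0$, and in general it is negative only when $s<1-n(1/\tilde{p}-1/p)\le 1-2n/(n+1)<0$ (using the left half of \eqref{eq1}); in particular it fails for $s=0$ and $s=1$, the cases the paper actually needs. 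With a growing operator norm, $T$ is not a contraction and the free term $\mathscr{G}_{\eta}q$ does not decay, so the argument as written does not close. Substituting the correct exponent repairs everything: the contraction constant is $O\bigl(\tau^{-(2-n(1/\tilde{p}-1/p))}\bigr)\to 0$, and
$\|r\|_{H^{s,p}}\lesssim\|q\|_{H^{s,\tilde{p}}}\,\tau^{-(2-n(1/\tilde{p}-1/p))}=O(\tau^{s-n/p-\sigma})$ with $\sigma:=2-n/\tilde{p}+s>0$ by the right half of \eqref{eq1}, which gives (indeed more than) the claimed \eqref{eq:CGOesti_tau}.
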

In order to prove Proposition~\ref{prop:CGO}, we apply the following lemma from \cite[Proposition 3.3]{PSV17}, which is based on the uniform Sobolev inequalities given in \cite{KRS87}.
\begin{lem}\label{lem:CGO_PSV}
	Suppose that $n\ge 2$, $s\in\RR$, $1<\tilde{p}<2$, and $p>1$ satisfying $2/(n+1)\le 1/\tilde{p}-1/p< 2/n$.
	Then for any $\eta\in\CC^n$ of the form \eqref{eq:eta} with $\tau$ sufficiently large,
	there is an operator 
	$
	\mathscr{G}_{\eta}: H^{s,\tilde{p}} \to H^{s,p}
	$
	which maps $f\in H^{s,\tilde{p}}$ to a solution $r=\mathscr{G}_{\eta}f$ of 
	\begin{equation}\label{eq:CGO_PSV}
	(\Delta+2\eta\cdot\nabla)r=f\quad\mbox{in $\RR^n$},
	\end{equation}
	which satisfies 
	\begin{equation*}
	\|\mathscr{G}_{\eta} f\|_{H^{s,p}}\, {\lesssim}\, \frac{\|f\|_{H^{s,\tilde{p}}}}{\tau^{2-n(1/\tilde{p}-1/p)}}.
	\end{equation*}
\end{lem}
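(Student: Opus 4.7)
The plan is to define $\mathscr{G}_\eta$ explicitly as a Fourier multiplier and then invoke the Kenig-Ruiz-Sogge uniform Sobolev inequalities \cite{KRS87} to obtain the claimed mapping bound. The Fourier symbol of $\Delta + 2\eta\cdot\nabla$ is
\[
p_\eta(\xi) := -|\xi|^2 + 2\im\eta\cdot\xi,
\]
and the orthogonality $d\cdot d^\perp = 0$ together with $|d|=|d^\perp|=1$ gives $\eta\cdot\eta = 0$, hence the factorization $p_\eta(\xi) = -(\xi-\im\eta)\cdot(\xi-\im\eta)$. Thus $p_\eta$ vanishes exactly on the codimension-$2$ variety $\Sigma_\eta := \{\xi\in\RR^n : d\cdot\xi = 0,\ |\xi-\tau d^\perp|=\tau\}$. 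I would define
\[
\mathscr{G}_\eta f := \mathcal{F}^{-1}\!\left[\frac{\hat f(\xi)}{p_\eta(\xi)}\right]
\]
as a tempered distribution, either by limiting absorption $p_\eta \pm \im 0$ or as convolution with the Faddeev Green's function; both interpretations make $(\Delta + 2\eta\cdot\nabla)\mathscr{G}_\eta f = f$ manifest on the Fourier side.

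To promote the desired estimate from the $L^{\tilde p}\to L^p$ level to the Bessel potential level I would use that $\mathscr{G}_\eta$, being a Fourier multiplier, commutes with the Bessel operator $\Lambda^s := \mathcal{F}^{-1}[(1+|\xi|^2)^{s/2}\mathcal{F}\cdot\,]$, so
\[
\|\mathscr{G}_\eta f\|_{H^{s,p}} = \|\Lambda^s \mathscr{G}_\eta f\|_{L^p} = \|\mathscr{G}_\eta \Lambda^s f\|_{L^p}, \qquad \|\Lambda^s f\|_{L^{\tilde p}} = \|f\|_{H^{s,\tilde p}}.
\]
Consequently the problem reduces to establishing, for $g\in L^{\tilde p}$,
\[
\|\mathscr{G}_\eta g\|_{L^p(\RR^n)} \lesssim \tau^{-2+n(1/\tilde p - 1/p)}\,\|g\|_{L^{\tilde p}(\RR^n)}.
\]

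For this core estimate I would first modulate: replacing $g$ by $e^{\im\tau d^\perp\cdot x} g$ translates the frequency by $-\tau d^\perp$ and transforms the symbol into $-(\zeta\cdot d + \im\tau)^2 - |\zeta-(\zeta\cdot d)d|^2$, a constant-coefficient second-order symbol with characteristic parameter of magnitude $\tau^2$. The uniform Sobolev inequality of Kenig-Ruiz-Sogge \cite{KRS87} bounds the inverse of an arbitrary second-order constant-coefficient operator from $L^{\tilde p}$ into $L^p$ uniformly in its lower-order terms, precisely in the exponent range $2/(n+1)\le 1/\tilde p-1/p < 2/n$. A homogeneity scaling $x\mapsto \tau x$, comparing symbols at the characteristic scale $|\xi|\sim \tau$, then converts this uniform bound into the explicit decay $\tau^{-2+n(1/\tilde p - 1/p)}$.

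The main technical obstacle is the handling of the codimension-$2$ characteristic variety $\Sigma_\eta$ of $p_\eta$: the multiplier $1/p_\eta$ fails to be locally integrable there, so neither classical Mikhlin-type multiplier theorems nor the Hardy-Littlewood-Sobolev inequality apply directly. The full strength of \cite{KRS87}, itself relying on Stein-Tomas-type Fourier restriction estimates on spheres, is essential; the lower endpoint $2/(n+1)$ of the allowed range is precisely the Stein-Tomas exponent, while the upper endpoint $2/n$ corresponds to the scale-invariant Sobolev embedding. Once the KRS estimate is granted, the modulation and the Bessel commutation combine routinely to yield the stated bound.
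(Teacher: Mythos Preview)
Your proposal is correct and matches what the paper does: the paper does not prove this lemma but simply quotes it from \cite[Proposition~3.3]{PSV17}, noting that it rests on the uniform Sobolev inequalities of \cite{KRS87}. Your sketch---Fourier multiplier definition of $\mathscr{G}_\eta$, commutation with Bessel potentials to reduce to an $L^{\tilde p}\to L^p$ estimate, and then the Kenig--Ruiz--Sogge bound combined with scaling to extract the $\tau^{-2+n(1/\tilde p-1/p)}$ decay---is precisely the argument underlying that cited result.
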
 
\noindent
We remark that in the formulation of the  lemma and in what follows  throughout the paper,  the notation ${\lesssim}$ means less than or equal to up to a constant independent of $\tau$, for $\tau$ sufficiently large.
\begin{proof}[Proof of Proposition~\ref{prop:CGO}]
	Substituting the form \eqref{eq:CGO} into the equation~\eqref{eq:PDE} yields
	\begin{equation}\label{eq:PDEresidual}
	\Delta r +2\eta\cdot\nabla r=qr + q,
	\end{equation}
	where $q$ is the function defined in Condition~\ref{cond1}.
	Conversely, one can observe that if $w$ is defined as in \eqref{eq:CGO} with $r$ satisfying \eqref{eq:PDEresidual}, then $w$ is a solution to \eqref{eq:PDE}.
	
	We construct 
	\begin{equation}\label{eq:CGOresidual}
	r:=\pare{\Id - \mathscr{G}_{\eta}q}^{-1}\mathscr{G}_{\eta} q,
	\end{equation}
	by claiming that $\pare{\Id - \mathscr{G}_{\eta}q}^{-1}\mathscr{G}_{\eta}$ is a bounded operator mapping from $H^{s,\tilde{p}}$ to $H^{s,p}$, with 
	\begin{equation*}
	\|\pare{\Id - \mathscr{G}_{\eta}q}^{-1}\mathscr{G}_{\eta}f\|_{H^{s,p}}
	\lesssim \frac{\|f\|_{H^{s,\tilde{p}}}}{\tau^{2-n(1/\tilde{p}-1/p)}}.
	\end{equation*}
	If this claim is true, one can verify straightforwardly that the function $r$ defined in \eqref{eq:CGOresidual} satisfies \eqref{eq:PDEresidual}, by using that $\mathscr{G}_{\eta}$ is a solution operator of \eqref{eq:CGO_PSV}.
	Moreover, we have 
	\begin{equation*}
	\|r\|_{H^{s,p}}
	\lesssim  \frac{\|q\|_{H^{s,\tilde{p}}}}{\tau^{2-n(1/\tilde{p}-1/p)}}
	=\frac{\|q\|_{H^{s,\tilde{p}}}}{\tau^{\sigma -s +n/p }},
	\end{equation*}
	with the constant $\sigma:=2-n/\tilde{p}+s>0$.
	We are left to prove the claim.
	
	Notice that $\mathscr{G}_{\eta}q$ is a bounded operator on $H^{s,p}$. In particular, one has
	\begin{equation*}
	\|\mathscr{G}_{\eta}q f\|_{H^{s,p}}
	\lesssim  \frac{\|q f\|_{H^{s,\tilde{p}}}}{\tau^{2-n(1/\tilde{p}-1/p)}}
	\lesssim  \frac{\|f\|_{H^{s,p}}}{\tau^{2-n(1/\tilde{p}-1/p)}},
	\end{equation*}
	for any $f\in H^{s,p}$.
	Recall that $2-n(1/\tilde{p}-1/p)$ is positive. As a consequence, the operator $\Id - \mathscr{G}_{\eta}q$ is invertible on $H^{s,p}$ with a bounded inverse for $\tau$ sufficiently large.
	Therefore we have for any $f\in H^{s,p}$ that
	\begin{equation*}
	\|\pare{\Id - \mathscr{G}_{\eta}q}^{-1}\mathscr{G}_{\eta}f\|_{H^{s,p}}
	\lesssim  \|\mathscr{G}_{\eta}f\|_{H^{s,p}}
	\lesssim  \frac{\|f\|_{H^{s,p}}}{\tau^{2-n(1/\tilde{p}-1/p)}}=\frac{\|f\|_{H^{s,p}}}{\tau^{\sigma-s+n/p}}.
	\end{equation*}
	
	The proof is complete.
\end{proof}

\section{Local analysis of solutions near a corner}\label{sec:corner}
We now use the CGO solutions introduced in Section \ref{cgo} to analyze the behavior of solutions to the partial differential equations of interest in a neighborhood of a corner, providing the bridge to final goal of understanding the scattering by an inhomogeneity whose support contains a corner. In what follows, ${\Ss^{n-1}}$ denotes the unit sphere in ${\mathbb R}^n$ $n\geq 2$, and ${\Ss_+^{n-1}}$ denotes the upper half unit sphere. 

Let $\mathcal{K}$ be a given open subset of ${\Ss_+^{n-1}}$ which is Lipschitz and simply connected. We define the (infinite and open) ``generalized cone'' $\mathcal{C}=\mathcal{C}_{\mathcal{K}}$ as $\mathcal{C}:=\{x\in\RR^n;\ \hat{x}\in\mathcal{K}\}$.
Denote $\mathcal{C}_{\epsilon}:=\mathcal{C}\cap B_{\epsilon}$ and $\mathcal{K}_{\epsilon}:=\mathcal{C}\cap\partial B_{\epsilon}$, where $B_{\epsilon}$ is the ball centered at the origin of radius $\epsilon>0$. 
Given a positive constant $\delta$, we define $\mathcal{K}'_{\delta}$ as the open set of $\Ss^{n-1}$ which is composed of all directions $d\in\Ss^{n-1}$ satisfying that 
\begin{equation}\label{eq:dProperty}
d\cdot \hat{x}>\delta>0,\quad\mbox{for all $\hat{x} \in \mathcal{K}$}.
\end{equation}
Let $\rho\in L^{\infty}(\RR^n)$ and $\condcor\in L^{\infty}(\RR^n)$ satisfy
\begin{equation}\label{eq:gammEllipCor}
0<\lambda_1\le \condcor(x) \le \lambda_2<\infty,
\quad \mbox{for almost all $x\in\mathcal{C}_{\epsilon}$}.
\end{equation}

The following result is a direct consequence of Lemma~\ref{lem:Int}.
\begin{lem}\label{lem:IntCor}
	If $u,v\in H^1(\mathcal{C}_{\epsilon})$ satisfy
	\begin{equation}\label{eq:TEVPlocal}
	\begin{split}
	\nabla\cdot \condcor \nabla u + k^2 \rho u=0,\  \Delta v + k^2 v=0,\qquad&\mbox{in $\mathcal{C}_{\epsilon}$},\\
	u=v, \quad \condcor\partial_{\nu} u=\partial_{\nu}v,\qquad&\mbox{on $\partial\mathcal{C}_{\epsilon}\setminus\mathcal{K}_{\epsilon}$},
	\end{split}
	\end{equation}
	then one has 
	\begin{equation}\label{eq:IntCor}
	\int_{\mathcal{C}_{\epsilon}}\pare{\condcor-1} \nabla v\cdot\nabla w-k^2(\rho-1) v w \,dx
	=\int_{\mathcal{K}_{\epsilon}}\condcor\partial_{\nu}w\pare{v-u}-w\pare{\partial_{\nu}v-\condcor\partial_{\nu}u} \,ds
	\end{equation}
	for any solution $w$ to
	\begin{equation}\label{eq:PDEomega}
	\nabla\cdot \condcor \nabla w + k^2\rho w=0,\quad \mbox{in $\mathcal{C}_{\epsilon}$}.
	\end{equation}
\end{lem}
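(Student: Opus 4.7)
The plan is to observe that this statement is really the universal Green-type identity of Lemma~\ref{lem:Int} applied to the local domain $\mathcal{C}_\epsilon$, combined with the partial transmission conditions on $\partial\mathcal{C}_\epsilon\setminus\mathcal{K}_\epsilon$. The crucial point is that Lemma~\ref{lem:Int} imposes no boundary conditions at all in its hypothesis: it asserts an integration-by-parts identity for any triple $(u,v,w)\in H^1$ satisfying the three PDEs on the chosen domain, and the $(v-u)$ and $(\partial_\nu v-\condcor\partial_\nu u)$ terms on the boundary are simply what emerges from Green's formula, with no cancellation built in.

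First I would verify that the hypotheses of Lemma~\ref{lem:Int} are all met with $\Omega=\mathcal{C}_\epsilon$ and with $(a,c)$ replaced by $(\condcor,\rho)$. Since $\mathcal{K}\subset\Ss^{n-1}_+$ is Lipschitz and simply connected, the truncated cone $\mathcal{C}_\epsilon=\mathcal{C}\cap B_\epsilon$ is a bounded Lipschitz domain; $\condcor$ is uniformly elliptic and bounded on it by \eqref{eq:gammEllipCor}; and the three PDEs in \eqref{eq:TEVPlocal} and \eqref{eq:PDEomega} hold by assumption in $H^1(\mathcal{C}_\epsilon)$. Applying Lemma~\ref{lem:Int} then yields
\begin{equation*}
\int_{\mathcal{C}_\epsilon}(\condcor-1)\nabla v\cdot\nabla w-k^2(\rho-1)vw\,dx
=\int_{\partial\mathcal{C}_\epsilon}\condcor\partial_\nu w\,(v-u)-w(\partial_\nu v-\condcor\partial_\nu u)\,ds.
\end{equation*}

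Next I would decompose $\partial\mathcal{C}_\epsilon=(\partial\mathcal{C}_\epsilon\setminus\mathcal{K}_\epsilon)\cup\mathcal{K}_\epsilon$ and look at the integrand on each piece. On $\partial\mathcal{C}_\epsilon\setminus\mathcal{K}_\epsilon$ the transmission conditions in \eqref{eq:TEVPlocal} give $v-u=0$ in the $H^{1/2}$ trace sense and $\partial_\nu v-\condcor\partial_\nu u=0$ in the dual $H^{-1/2}$ sense, so both factors vanish in the appropriate duality pairing and the surface integral over that part is identically zero. What remains on the right-hand side is exactly the integral over $\mathcal{K}_\epsilon$ in the statement.

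I do not anticipate any genuine obstacle. The only minor point of care is interpreting the boundary terms through the standard $H^{1/2}$--$H^{-1/2}$ duality on the Lipschitz surface $\partial\mathcal{C}_\epsilon$: the conormal traces $\partial_\nu v$, $\condcor\partial_\nu u$ and $\condcor\partial_\nu w$ are defined as elements of $H^{-1/2}(\partial\mathcal{C}_\epsilon)$ through the PDEs, so that products such as $w(\partial_\nu v-\condcor\partial_\nu u)$ make sense. Once the vanishing of one factor on $\partial\mathcal{C}_\epsilon\setminus\mathcal{K}_\epsilon$ is established in the proper trace space, the duality localizes and the reduction to $\mathcal{K}_\epsilon$ is automatic.
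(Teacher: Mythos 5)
Your argument is exactly the paper's: the authors state Lemma~\ref{lem:IntCor} as ``a direct consequence of Lemma~\ref{lem:Int}'', and your proposal simply spells out that consequence --- apply the Green-type identity on $\mathcal{C}_\epsilon$, split $\partial\mathcal{C}_\epsilon$ into $\mathcal{K}_\epsilon$ and its complement, and use the transmission conditions of \eqref{eq:TEVPlocal} to annihilate the boundary contribution on $\partial\mathcal{C}_\epsilon\setminus\mathcal{K}_\epsilon$. The proof is correct, and your remark about interpreting the conormal traces in the $H^{1/2}$--$H^{-1/2}$ duality is the right technical caveat.
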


Denote the vector field $\vec{b}=\vec{b}_{\condcor}$ as
\begin{equation*}
\vec{b}=\vec{b}_{\condcor}(x):=-\frac{\nabla\condcor^{-1/2}}{\condcor^{-1/2}}
=\frac{\nabla\condcor^{1/2}}{\condcor^{1/2}}
=2\frac{\nabla\condcor}{\condcor}
=-2\frac{\nabla\condcor^{-1}}{\condcor^{-1}}
=2\nabla \ln \condcor.
\end{equation*} 
\begin{lem}\label{lem:RapidDecay}
	Given  $\condcor\in W^{1,\infty}$ satisfying \eqref{eq:gammEllipCor}, let $u,v\in H^1(\mathcal{C}_{\epsilon})$ satisfy \eqref{eq:TEVPlocal} and let $w$ be a solution to \eqref{eq:PDEomega} of the form \eqref{eq:CGO} with $r\in H^1(\mathcal{C}_{\epsilon})$ and $d\in \Ss^{n-1}$ satisfying \eqref{eq:dProperty}. 
	Then one has
	\begin{equation}\label{eq:DecayRapi}
	\abs{\int_{\mathcal{C}_{\epsilon}}\pare{\condcor-1} \nabla v\cdot\nabla w-k^2(\rho-1) v w \,dx}
	= O(\tau e^{-\delta\tau \epsilon}),
	\end{equation}
	for $\tau$ sufficiently large.
\end{lem}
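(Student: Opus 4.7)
First I would apply Lemma~\ref{lem:IntCor} to rewrite the volume integral as the boundary integral
$$\int_{\mathcal{K}_\epsilon}\gamma\,\partial_\nu w\,(v-u)-w\,(\partial_\nu v-\gamma\,\partial_\nu u)\,ds,$$
and then show that this boundary integral is of order $\tau e^{-\delta\tau\epsilon}$ by exploiting the exponential decay of the CGO weight $e^{\eta\cdot x}$ on the spherical cap $\mathcal{K}_\epsilon$.

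The key geometric fact is that on $\mathcal{K}_\epsilon$ one has $|x|=\epsilon$ and $\hat x\in\mathcal{K}$, so the defining property \eqref{eq:dProperty} of $d$ gives $d\cdot x\geq\delta\epsilon$, hence
$$\bigl|e^{\eta\cdot x}\bigr|=e^{-\tau d\cdot x}\leq e^{-\delta\tau\epsilon}\quad\text{pointwise on }\mathcal{K}_\epsilon.$$
Writing $w=\gamma^{-1/2}(1+r)e^{\eta\cdot x}$, and using that $\gamma\in W^{1,\infty}$ is bounded away from zero together with $|\eta|=\sqrt 2\,\tau$, I would derive pointwise estimates
$$|w|\lesssim e^{-\delta\tau\epsilon}(1+|r|),\qquad |\nabla w|\lesssim \tau\, e^{-\delta\tau\epsilon}(1+|r|)+e^{-\delta\tau\epsilon}|\nabla r|$$
on $\mathcal{K}_\epsilon$. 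Because $r\in H^1(\mathcal{C}_\epsilon)$, the traces of $r$ and, after using the PDE, of $\nabla r$ are under control in $L^2(\mathcal{K}_\epsilon)$ uniformly away from $\tau$; similarly, since $u,v$ satisfy the elliptic equations of \eqref{eq:TEVPlocal}, their $H^{1/2}$ traces and the distributional co-normal derivatives $\partial_\nu v,\,\gamma\partial_\nu u\in H^{-1/2}(\mathcal{K}_\epsilon)$ are well defined.

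The bound \eqref{eq:DecayRapi} then follows by pairing each factor in the boundary integrand with its partner in the appropriate $H^{\pm 1/2}$ duality: the $w$-factor contributes $e^{-\delta\tau\epsilon}$ while the $\partial_\nu w$-factor contributes $\tau e^{-\delta\tau\epsilon}$, and the dependencies on $u,v,r$ are $\tau$-independent. The main technical obstacle I anticipate is making the $H^{\pm 1/2}$ duality rigorous on the proper subset $\mathcal{K}_\epsilon\subsetneq\partial \mathcal{C}_\epsilon$, where co-normal derivatives are only defined as functionals on $\widetilde H^{1/2}(\mathcal{K}_\epsilon)$. This can be handled either by a smooth cut-off that localizes to $\mathcal{K}_\epsilon$ away from the lateral boundary $\partial\mathcal{C}_\epsilon\setminus\mathcal{K}_\epsilon$, or by replacing $\epsilon$ with a slightly smaller $\epsilon'<\epsilon$ and using interior regularity inside $\mathcal{C}_\epsilon$; in either case the argument produces the same exponential rate after absorbing the (harmless) change of constant into $\delta$.
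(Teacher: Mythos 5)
Your proposal is correct and follows essentially the same route as the paper: reduce to the boundary integral over $\mathcal{K}_\epsilon$ via Lemma~\ref{lem:IntCor}, use $\abs{e^{\eta\cdot x}}\le e^{-\tau\delta|x|}=e^{-\delta\tau\epsilon}$ on the cap, and extract the extra factor $\tau$ from $\partial_\nu w$ through the explicit formula $\nabla w=\condcor^{-1/2}\Pare{\nabla r+(1+r)(\eta-\vec{b})}e^{\eta\cdot x}$. The paper handles the boundary terms by simply bounding the integrands pointwise without the $H^{\pm1/2}$ duality discussion, but that is a presentational difference rather than a different argument.
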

\begin{proof}
	It is observed that
	\begin{equation*}
	\abs{e^{\eta\cdot x}}\le e^{-\tau d\cdot x}\le e^{-\tau \delta |x|},
	\quad \mbox{for any $x\in \mathcal{C}\cap \overline{B_{\epsilon}}$}. 
	\end{equation*}
	Hence for $w$ of the form \eqref{eq:CGO} we have
	\begin{equation*}
	\begin{split}
	\abs{\int_{\mathcal{K}_{\epsilon}}w\pare{\partial_{\nu}v-\condcor\partial_{\nu}u}}
	=&\abs{\int_{\mathcal{K}_{\epsilon}}\condcor^{-1/2}(1+r(x))\pare{\partial_{\nu}v-\condcor\partial_{\nu}u}e^{\eta\cdot x}}
	\\\le&  e^{-\epsilon  \delta \tau }\int_{\mathcal{K}_{\epsilon}}\abs{1+r(x)}\pare{\lambda_1^{-1/2}\abs{\partial_{\nu}v}+\lambda_2^{1/2}\abs{\partial_{\nu}u}} 
	\\\lesssim & e^{-\epsilon  \delta \tau },
	\end{split}	
	\end{equation*}
	where $\lambda_1$ and $\lambda_2$ are the constants in \eqref{eq:gammEllipCor}.
	Notice that
	\begin{equation*}
	\nabla \pare{\condcor^{-1/2}e^{\eta\cdot x}} 
	=(\eta-\vec{b})\condcor^{-1/2}e^{\eta\cdot x}.
	\end{equation*}
	Then we have
	\begin{equation}\label{eq:gradw}
	\nabla w = \condcor^{-1/2}\Pare{\nabla r +\pare{1+r}(\eta-\vec{b})}e^{\eta\cdot x},
	\end{equation}
	and hence
	\begin{equation*}
	\partial_{\nu} w =\condcor^{-1/2}\Pare{\partial_{\nu} r +\pare{1+r}(\eta-\vec{b})\cdot\nu}e^{\eta\cdot x}.
	\end{equation*}
	Therefore we have
	\begin{equation*}
	\begin{split}
	\abs{\int_{\mathcal{K}_{\epsilon}}\condcor\partial_{\nu}w\pare{v-u}}
	=&\abs{\int_{\mathcal{K}_{\epsilon}}\condcor^{1/2}\Pare{\partial_{\nu} r +\pare{1+r}(\eta-\vec{b})\cdot\nu}\pare{v-u}e^{\eta\cdot x}}
	\\\le&  \lambda_2^{1/2}\abs{\int_{\mathcal{K}_{\epsilon}}\Pare{\abs{\partial_{\nu} r}+2\abs{1+r}}\abs{v-u}}\tau e^{-\epsilon  \delta \tau }
	\\&+2\lambda_1^{1/2}\abs{\int_{\mathcal{K}_{\epsilon}}\partial_{\nu} \condcor \pare{1+r}\pare{v-u}} e^{-\epsilon  \delta \tau }
	\\\lesssim & \tau e^{-\epsilon \delta\tau}.
	\end{split}	
	\end{equation*}
	\noindent
	Now the proof can be completed by using the identity \eqref{eq:IntCor}.
\end{proof}

In the following, we will use repeatedly the estimate 
\begin{equation}\label{eq:EstiInt}
\int_{0}^{\epsilon}t^{b-1}e^{-\mu t}dt
=\Gamma(b)/\mu^{b} +o\pare{e^{-\epsilon \Re\mu /2}},\quad \mbox{as $\Re\mu\to\infty$},
\end{equation}
for any real number $b>0$ and any complex number $\mu$ satisfying $\Re\mu\gg 1$, 
where $\Gamma$ stands for the Gamma function.
We include the proof of \eqref{eq:EstiInt} for readers' convenience.
\begin{proof}[Proof of \eqref{eq:EstiInt}]
	Denote $\mu_1:=\Re\mu\gg 1$.
	Suppose that $\mu_1\ge 2(b-1)/s$. Then 
	\begin{equation*}
	t^{b-1}\le e^{\mu_1 t/2 }, \quad \mbox{for all $t\ge s$},
	\end{equation*}
	and hence
	\begin{equation*}
	\abs{\int_{\epsilon}^{\infty}t^{b-1}e^{-\mu t}dt}
	\le \int_{\epsilon}^{\infty}t^{b-1}e^{-\mu_1t }dt
	\le \int_{\epsilon}^{\infty}e^{-\mu_1 t/2}dt
	=2e^{-\mu_1 s/2}/\mu_1.
	\end{equation*}	
	Notice that
	\begin{equation*}
	\int_{0}^{\infty}t^{b-1}e^{-\mu t}dt
	=\mathcal{L}\{t^{b-1}\}(\mu)=\Gamma(b)/\mu^{b},
	\end{equation*}
	where $\mathcal{L}$ represents the Laplace transform and $\Gamma$ is the gamma function.
	Therefore, we have
	\begin{equation*}
	\begin{split}
	\int_{0}^{\epsilon}t^{b-1}e^{-\mu t}dt
	&=\int_{0}^{\infty}t^{b-1}e^{-\mu t}dt	-\int_{\epsilon}^{\infty}t^{b-1}e^{-\mu t}dt
	\\&=\Gamma(b)/\mu^{b} +o\pare{e^{-\mu_1 s/2}},\quad \mbox{as $\mu_1=\Re\mu\to\infty$}.
	\end{split}
	\end{equation*}
\end{proof}

\begin{lem}\label{lem:ContraV}
	Under the same notations and conditions as in Lemma~\ref{lem:RapidDecay}, suppose that there are constants $p,\hat{p}>1$ and $\sigma_0>0$ such that the residual $r$  in the form \eqref{eq:CGO} of $w$ satisfies 
	\begin{equation}\label{eq:rDecay}
	\|r\|_{L^p(\mathcal{C}_{\epsilon})}=O(\tau^{-n/p-\sigma_0}) \quad\text{and}\quad
	\|\nabla r\|_{L^{\hat{p}}(\mathcal{C}_{\epsilon})}=O(\tau^{1-n/\hat{p}-\sigma_0})
	\end{equation} 
	for $\tau$ sufficiently large.
	Assume further that there exist constants $\alpha,\beta, \sigma \in\RR$, with $\sigma>0$, $\alpha\neq -1$, and functions $\condcor_\beta, V\in L^{\infty}(\mathcal{K})$, which are all independent of $\tau$, satisfying
	\begin{equation}\label{eq:ExpanG}
	(\gamma(x)-1)\gamma^{-1/2}(x)= \condcor_{\beta}(\hat{x})|x|^{\beta} +O(|x|^{\beta+\sigma}),
	\end{equation}
	and
	\begin{equation}\label{eq:ExpanV}
	\nabla v(x)=V(\hat{x})|x|^{\alpha}
	+O(|x|^{\alpha+\sigma}),
	\end{equation}
	for all $x\in \mathcal{C}_{\epsilon}$.
	Then one must have
	\begin{equation}\label{eq:IntVanishV2}
		\begin{split}
			&\abs{\int_{\mathcal{C}_{\epsilon}}
				\condcor_{\beta}(\hat{x})V(\hat{x})\cdot \eta|x|^{\alpha+\beta}e^{\eta\cdot x}dx}
			= \|\condcor_{\beta}\|_{L^{\infty}(\mathcal{K})}\|V\|_{L^{\infty}(\mathcal{K})}\,O\big(\frac{1}{\tau^{n+\beta+\alpha-1}}\big),
		\end{split}
	\end{equation}
	and
	\begin{equation}\label{eq:IntVanishV}
	\begin{split}
	&\abs{\int_{\mathcal{C}_{\epsilon}}
		\left[\pare{\condcor(x)-1} \nabla v(x)\cdot\nabla w(x)
		- \condcor_{\beta}(\hat{x})V(\hat{x})\cdot \eta|x|^{\alpha+\beta}e^{\eta\cdot x}\right]dx}
	\\&= \|\condcor_{\beta}\|_{L^{\infty}(\mathcal{K})}\|V\|_{L^{\infty}(\mathcal{K})}\,O\big(\frac{1}{\tau^{n+\beta+\alpha}}\big)+ O\big(\frac{1}{\tau^{n+\beta+\alpha-1+\sigma}}\big),
	\end{split}
	\end{equation}
	as $\tau\to\infty$, for any $d\in\mathcal{K}'_{\delta}$ with $\delta>0$.
\end{lem}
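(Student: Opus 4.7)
The plan is to substitute the CGO form \eqref{eq:CGO} together with the expansions \eqref{eq:ExpanG}--\eqref{eq:ExpanV} into the integrand of \eqref{eq:IntVanishV}, isolate the principal term $\condcor_\beta(\hat x)V(\hat x)\cdot\eta|x|^{\alpha+\beta}e^{\eta\cdot x}$, and bound each remaining contribution either by a direct polar-coordinate integration or by H\"older's inequality combined with the decay hypothesis \eqref{eq:rDecay}. The radial building block will be the elementary estimate $\||x|^{\kappa}e^{\eta\cdot x}\|_{L^q(\mathcal{C}_\epsilon)}\lesssim\tau^{-(\kappa+n/q)}$, valid whenever $q\kappa+n>0$; this follows from $|e^{\eta\cdot x}|\le e^{-\tau\delta|x|}$ combined with \eqref{eq:EstiInt} applied to the one-dimensional radial integral with parameter $\mu=q\tau\delta$.

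For \eqref{eq:IntVanishV2} I would pass to polar coordinates $x=t\hat x$ with $\hat x\in\mathcal{K}$ and $t\in(0,\epsilon)$, rewriting the left hand side as
\[
\int_{\mathcal{K}}\condcor_\beta(\hat x)V(\hat x)\cdot\eta\biggl(\int_0^\epsilon t^{n-1+\alpha+\beta}e^{-\mu(\hat x)t}\,dt\biggr)dS(\hat x),
\]
where $\mu(\hat x):=-\eta\cdot\hat x=\tau(d+\im d^\perp)\cdot\hat x$ has real part bounded below by $\tau\delta$ because $d\in\mathcal{K}'_\delta$ and \eqref{eq:dProperty} holds. Applying \eqref{eq:EstiInt} with $b=n+\alpha+\beta$ evaluates the inner integral as $\Gamma(b)/\mu(\hat x)^b$ plus exponentially small error; since $|\eta|\le\sqrt{2}\,\tau$, $|\mu(\hat x)|\ge\tau\delta$, and $\mathcal{K}$ has finite surface measure, the claimed bound $O(\tau^{1-n-\alpha-\beta})$ follows once the $L^\infty$ norms of $\condcor_\beta$ and $V$ are pulled outside the angular integral.

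For \eqref{eq:IntVanishV} I would expand $\nabla w$ via \eqref{eq:gradw}, so that the integrand of the left hand side becomes $[(\condcor-1)\condcor^{-1/2}]\nabla v\cdot[\nabla r+(1+r)(\eta-\vec{b})]e^{\eta\cdot x}$. Subtracting the principal term isolates five error pieces: the two expansion errors from replacing $(\condcor-1)\condcor^{-1/2}$ by $\condcor_\beta(\hat x)|x|^\beta$ and $\nabla v$ by $V(\hat x)|x|^\alpha$, both carrying an extra $|x|^\sigma$ factor; the two $r$-pieces involving $r\eta$ and $\nabla r$ respectively; and the piece with $\vec{b}$ in place of $\eta$. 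The two expansion-error pieces are handled by the polar computation of the previous paragraph with radial exponent $n-1+\alpha+\beta+\sigma$, producing $O(\tau^{1-n-\alpha-\beta-\sigma})$. The $\vec{b}$-piece loses the factor $\eta$ but retains $\vec{b}\in L^\infty$, yielding the better bound $O(\tau^{-n-\alpha-\beta})$. For the two $r$-pieces I would apply H\"older's inequality with conjugate exponents $(p,p')$ and $(\hat{p},\hat{p}')$ respectively, combining the decay rates in \eqref{eq:rDecay} with the radial $L^q$ estimate above (taking $q\in\{p',\hat p'\}$ and $\kappa=\alpha+\beta$); the powers of $\tau$ then collapse to $O(\tau^{1-n-\alpha-\beta-\sigma_0})$, which is absorbed into the stated error term.

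The main obstacle is this H\"older step: verifying that the $r$-dependent pieces beat the principal order by a strictly positive power of $\tau$ relies on the precise cancellation $1/p+1/p'=1$ against the decay rate $\tau^{-n/p-\sigma_0}$ of $\|r\|_{L^p}$ (and analogously for $\nabla r$), and this is exactly what motivates the hypothesis \eqref{eq:rDecay}. A secondary technical point worth recording is that the radial Gamma integrals at the origin require $n+\alpha+\beta>0$ and $n+q(\alpha+\beta)>0$ for $q\in\{p',\hat p'\}$; these are satisfied in the Helmholtz corner setting targeted by later sections, where $\alpha$ and $\beta$ are non-negative by virtue of the local expansions of $\nabla v$ and $(\condcor-1)\condcor^{-1/2}$ near the vertex.
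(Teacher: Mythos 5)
Your proposal is correct and follows essentially the same route as the paper's proof: the same decomposition into the principal term, the $\vec{b}$-term, the $r$- and $\nabla r$-terms handled by H\"older against the hypothesis \eqref{eq:rDecay}, and the expansion-error terms, all estimated radially via \eqref{eq:EstiInt} and the lower bound $d\cdot\hat{x}>\delta$ from \eqref{eq:dProperty}. The only cosmetic difference is that you package the radial estimate as an $L^q$ bound on $|x|^{\kappa}e^{\eta\cdot x}$ up front, whereas the paper derives it inline for each piece; your closing remark about the positivity conditions needed for the Gamma integrals is a point the paper leaves implicit.
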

\begin{proof}
	Recalling \eqref{eq:gradw} from before we have
	\begin{equation*}
	\begin{split}
	&\int_{\mathcal{C}_{\epsilon}}\pare{\condcor-1} \nabla v\cdot\nabla w\,dx=\int_{\mathcal{C}_{\epsilon}}\condcor^{-1/2}\pare{\condcor-1} \nabla v\cdot \pare{\eta-\vec{b}+\Pare{\nabla+\eta-\vec{b}\,}r}e^{\eta\cdot x}dx
	\end{split}
	\end{equation*}	
	Using \eqref{eq:ExpanG} and \eqref{eq:ExpanV} we observe that 
	\begin{equation*}
	\pare{\condcor-1}\condcor^{-1/2}\nabla v
	=\condcor_{\beta}(\hat{x})V(\hat{x})|x|^{\alpha+\beta}
	+O\pare{|x|^{\alpha+\beta+\sigma}}.
	\end{equation*} 
	Thus we are able to split the integral in \eqref{eq:IntVanishV} as
	\begin{equation}\label{eq:IntSplitV}
	\int_{\mathcal{C}_{\epsilon}}
		\pare{\condcor-1} \nabla v\cdot\nabla w\,dx
		-I_{10}
	=\sum_{j=1}^{3}I_{1j},
	\end{equation}
	where the integrals $I_{1j}$, $j=0,\ldots,3$, are defined by
	\begin{equation*}
	I_{10}:=\int_{\mathcal{C}_{\epsilon}}
	\condcor_{\beta}(\hat{x})V(\hat{x})\cdot \eta|x|^{\alpha+\beta}e^{\eta\cdot x}dx,
	\end{equation*}
	\begin{equation*}
	I_{11}:=-\int_{\mathcal{C}_{\epsilon}}
	\condcor_{\beta}(\hat{x})|x|^{\alpha+\beta}
	V(\hat{x})\cdot \vec{b}(x)e^{\eta\cdot x}dx,
	\end{equation*}
	\begin{equation*}
	I_{12}:=\int_{\mathcal{C}_{\epsilon}}\condcor_{\beta}(\hat{x})|x|^{\alpha+\beta}V(\hat{x})\cdot \pare{\nabla+\eta-\vec{b}(x)}r e^{\eta\cdot x}dx,
	\end{equation*}
	and
	\begin{equation*}
	I_{13}:=\int_{\mathcal{C}_{\epsilon}}
	O\pare{|x|^{\alpha+\beta+\sigma}}\cdot \pare{\eta-\vec{b}(x)+\Pare{\nabla+\eta-\vec{b}(x)}r}e^{\eta\cdot x}dx,
	\end{equation*}
	which can all be viewed as functions of $\tau$.

	Notice that
	\begin{equation*}
	|I_{10}|\le \tau \|\condcor_{\beta} V\|_{L^{\infty}(\mathcal{K})}
	\int_{\mathcal{C}_{\epsilon}}e^{-\tau d\cdot x}|x|^{\alpha+\beta} \,dx.
	\end{equation*}
	By applying \eqref{eq:EstiInt} and the property \eqref{eq:dProperty} for $d\in\mathcal{K}'_{\delta}$ we obtain that
	\begin{equation}\label{eq:est3}
	\begin{split}
	|I_{10}|\lesssim& \,\tau\,\|\condcor_{\beta}V\|_{L^{\infty}(\mathcal{K})}\int_{0}^{\epsilon}\int_{\mathcal{K}}t^{n+\alpha+\beta-1}e^{- t \tau d\cdot \hat{x} } \,d\sigma(\hat{x})dt
	\\&\lesssim \,\tau\,\|\condcor_{\beta}V\|_{L^{\infty}(\mathcal{K})}\int_{0}^{\epsilon}t^{n+\alpha+\beta-1}e^{- t \tau \delta } \,dt
	\lesssim  \frac{\|\condcor_{\beta}V\|_{L^{\infty}(\mathcal{K})}}{\tau^{n+\alpha+\beta-1}}.
	\end{split}
	\end{equation}
	Using the same arguments and recalling \eqref{eq:gammEllip} we also observe that
	\begin{equation*}
	\begin{split}
	|I_{11}|
	\le& \lambda_1^{-1}\|\nabla\condcor\|_{L^{\infty}(\mathcal{C}_{\epsilon})}\|\condcor_{\beta} V\|_{L^{\infty}(\mathcal{K})}
	\int_{\mathcal{C}_{\epsilon}}e^{-\tau d\cdot x}|x|^{\alpha+\beta} \,dx
	\lesssim  \frac{\|\condcor_{\beta}V\|_{L^{\infty}(\mathcal{K})}}{\tau^{n+\alpha+\beta}}.
	\end{split}
	\end{equation*}
	For the estimate concerning $I_{12}$, we first have that
	\begin{equation*}
	\begin{split}
	\abs{\int_{\mathcal{C}_{\epsilon}}re^{\eta\cdot x}\condcor_{\beta}(\hat{x})|x|^{\alpha+\beta}V(\hat{x})\cdot \pare{\eta-\vec{b}(x)} dx}
	&\lesssim \tau \|\condcor_{\beta}V\|_{L^{\infty}(\mathcal{K})} \int_{\mathcal{C}_{\epsilon}}e^{-\tau d\cdot x}|x|^{\alpha+\beta}  r \,dx
	\\&\hspace{-1in}\lesssim  \tau \|\condcor_{\beta}V\|_{L^{\infty}(\mathcal{K})} \|r\|_{L^p(\mathcal{C}_{\epsilon})}\pare{\int_{\mathcal{C}_{\epsilon}}e^{-p' \tau d\cdot x}|x|^{p'(\alpha+\beta)}\,dx}^{1/p'},
	\end{split}
	\end{equation*}
	where $p'$ denotes the H\"{o}lder conjugate of $p$.
	In the same way as for \eqref{eq:est3}, we can derive that
	\begin{equation*}
	\int_{\mathcal{C}_{\epsilon}}e^{-p' \tau d\cdot x}|x|^{p'(\alpha+\beta)}\,dx
	\le \sigma(\mathcal{K})\int_{0}^{\epsilon}t^{n+p'(\alpha+\beta)-1}e^{- t p'\tau \delta } \,dt
	\lesssim  \frac{1}{\tau^{n+p'(\alpha+\beta)}},
	\end{equation*}
	and hence that
	\begin{equation*}
	\abs{\int_{\mathcal{C}_{\epsilon}}re^{\eta\cdot x}\condcor_{\beta}(\hat{x})|x|^{\alpha+\beta}V(\hat{x})\cdot \pare{\eta-\vec{b}(x)} dx}
	\lesssim \frac{\|\condcor_{\beta}V\|_{L^{\infty}(\mathcal{K})}\|r\|_{L^p(\mathcal{C}_{\epsilon})}}{\tau^{n/p'+\alpha+\beta-1}}
	=\frac{\|\condcor_{\beta}V\|_{L^{\infty}(\mathcal{K})}\|r\|_{L^p(\mathcal{C}_{\epsilon})}}{\tau^{n+\alpha+\beta-1-n/p}}.
	\end{equation*}
	Similarly, we can obtain the following estimate for the rest part of the integral $I_{12}$,
	\begin{equation*}
	\begin{split}
	\abs{\int_{\mathcal{C}_{\epsilon}}e^{\eta\cdot x}\condcor_{\beta}(\hat{x})|x|^{\alpha+\beta}V(\hat{x})\cdot \nabla r\, dx}
	&\lesssim \|\condcor_{\beta}V\|_{L^{\infty}(\mathcal{K})} \|\nabla r\|_{L^{\hat{p}}(\mathcal{C}_{\epsilon})}\pare{\int_{\mathcal{C}_{\epsilon}}e^{-\hat{p}' \tau d\cdot x}|x|^{\hat{p}'(\alpha+\beta)}\,dx}^{1/\hat{p}'}
	\\&\lesssim \frac{ \|\condcor_{\beta}V\|_{L^{\infty}(\mathcal{K})}\|\nabla r\|_{L^{\hat{p}}(\mathcal{C}_{\epsilon})}}{\tau^{n+\alpha+\beta-n/\hat{p}}}.
	\end{split}
	\end{equation*}
	Therefore, the condition \eqref{eq:rDecay} implies
	\begin{equation*}
	|I_{12}|= \|\condcor_{\beta}V\|_{L^{\infty}(\mathcal{K})}\, O\big(\frac{1}{\tau^{n+\alpha+\beta-1+\sigma_0}}\big).
	\end{equation*}
	As for the integral $I_{13}$, analogous to the estimates for $I_{11}$ and $I_{12}$ one has 
	\begin{equation*}
	\begin{split}
	\abs{\int_{\mathcal{C}_{\epsilon}}
		e^{\eta\cdot x}O\pare{|x|^{\alpha+\beta+\sigma}}\cdot \pare{-\vec{b}+\Pare{\nabla+\eta-\vec{b}\, }r}dx}
	= O\big(\frac{1}{\tau^{n+\alpha+\beta+\sigma}}\big)
	+O\big(\frac{1}{\tau^{n+\alpha+\beta-1+\sigma+\sigma_0}}\big).
	\end{split}
	\end{equation*}
	Applying the argument in \eqref{eq:est3} for the rest of $I_{13}$ we obtain that
	\begin{equation*}
	\begin{split}
	\abs{\int_{\mathcal{C}_{\epsilon}}
		e^{\eta\cdot x}\eta\cdot O\pare{|x|^{\alpha+\beta+\sigma}} dx}
	&\lesssim  \tau \int_{\mathcal{C}_{\epsilon}}e^{-\tau d\cdot x}|x|^{\alpha+\beta+\sigma} dx
	\lesssim \frac{1}{\tau^{n+\alpha+\beta+\sigma-1}}.
	\end{split}
	\end{equation*}
	Hence we have 
	\begin{equation*}
	|I_{13}|=O\big(\frac{1}{\tau^{n+\alpha+\beta+\sigma-1}}\big).
	\end{equation*}
	
	The proof is complete.
\end{proof}
\begin{lem}\label{lem:Contrav}
	Adopting the same notations and assumptions as in Lemma~\ref{lem:ContraV},  assume further that there exist constants $\alpha_0, \beta_0 \in\RR$ and functions $\rho_0, \tilde{v}\in L^{\infty}(\mathcal{K})$, which are all independent of $\tau$, satisfying
	\begin{equation}\label{eq:ExpanR}
	(\rho(x)-1)\gamma^{-1/2}(x)=\rho_0(\hat{x})|x|^{\beta_0} +O(|x|^{\beta_0+\sigma}),
	\end{equation} 
	and
	\begin{equation}\label{eq:Expanv0}
	v(x)=\tilde{v}(\hat{x})|x|^{\alpha_0}+O(|x|^{\alpha_0+\sigma}),
	\end{equation}
	with some  $\delta>0$, for all $x\in \mathcal{C}_{\epsilon}$.
	Then one must have
	\begin{equation}\label{eq:IntVanishv2}
	\abs{\int_{\mathcal{C}_{\epsilon}} \rho_0(\hat x)\tilde{v}(\hat{x})|x|^{\alpha_0+\beta_0}e^{\eta\cdot x}
		dx} =\|\rho_0\|_{L^{\infty}(\mathcal{K})}\|\tilde{v}\|_{L^{\infty}(\mathcal{K})}\,O\big(\frac{1}{\tau^{n+\beta_0+\alpha_0}}\big),
	\end{equation}	
	and
	\begin{equation}\label{eq:IntVanishv}
	\begin{split}
	&\abs{\int_{\mathcal{C}_{\epsilon}} \left[(\rho-1) v w
		-\rho_0(\hat x)\tilde{v}(\hat{x})|x|^{\alpha_0+\beta_0}e^{\eta\cdot x}
		\right]dx} \\=&\|\rho_0\|_{L^{\infty}(\mathcal{K})}\|\tilde{v}\|_{L^{\infty}(\mathcal{K})}\,O\big(\frac{1}{\tau^{n+\beta_0+\alpha_0+\sigma_0}}\big)+O\big(\frac{1}{\tau^{n+\beta_0+\alpha_0+\sigma}}\big),
	\end{split}	
	\end{equation}
	as $\tau\to\infty$, for any $d\in\mathcal{K}'_{\delta}$ with $\delta>0$.
\end{lem}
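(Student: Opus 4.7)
The plan is to proceed in direct parallel with the proof of Lemma \ref{lem:ContraV}, taking advantage of the fact that here $w$ enters without derivatives, so no extra factor of $\tau$ is produced by differentiating the exponential. Substituting the form \eqref{eq:CGO} of $w$ together with the expansions \eqref{eq:ExpanR} and \eqref{eq:Expanv0}, I would write $(\rho-1)vw=\condcor^{-1/2}(\rho-1)\,v\,(1+r)\,e^{\eta\cdot x}$ and use the product expansion
\begin{equation*}
\condcor^{-1/2}(\rho-1)\,v=\rho_0(\hat x)\tilde v(\hat x)|x|^{\alpha_0+\beta_0}+O\pare{|x|^{\alpha_0+\beta_0+\sigma}}
\end{equation*}
to split the integral over $\mathcal{C}_\epsilon$ as $I_{20}+I_{21}+I_{22}$, where $I_{20}$ is precisely the integral appearing in \eqref{eq:IntVanishv2}, $I_{21}$ pairs the leading expansion with the factor $r$, and $I_{22}$ collects the $O(|x|^{\alpha_0+\beta_0+\sigma})(1+r)e^{\eta\cdot x}$ remainder.

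For \eqref{eq:IntVanishv2}, I would bound $|e^{\eta\cdot x}|\le e^{-\tau d\cdot x}\le e^{-\delta\tau|x|}$ using \eqref{eq:dProperty}, pass to polar coordinates, and invoke \eqref{eq:EstiInt} with $b=n+\alpha_0+\beta_0$ to obtain
\begin{equation*}
|I_{20}|\lesssim \|\rho_0\|_{L^\infty(\mathcal{K})}\|\tilde v\|_{L^\infty(\mathcal{K})}\int_0^{\epsilon}t^{n+\alpha_0+\beta_0-1}e^{-\delta\tau t}\,dt\lesssim \frac{\|\rho_0\|_{L^\infty(\mathcal{K})}\|\tilde v\|_{L^\infty(\mathcal{K})}}{\tau^{n+\alpha_0+\beta_0}}.
\end{equation*}

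For \eqref{eq:IntVanishv}, the term $I_{21}$ is treated by H\"older's inequality with conjugate exponents $p,p'$:
\begin{equation*}
|I_{21}|\lesssim \|\rho_0\tilde v\|_{L^\infty(\mathcal{K})}\|r\|_{L^p(\mathcal{C}_\epsilon)}\Pare{\int_{\mathcal{C}_\epsilon}|x|^{p'(\alpha_0+\beta_0)}e^{-p'\tau d\cdot x}dx}^{1/p'}.
\end{equation*}
Applying \eqref{eq:EstiInt} once more with $b=n+p'(\alpha_0+\beta_0)$ gives $\tau^{-(n/p'+\alpha_0+\beta_0)}=\tau^{-(n+\alpha_0+\beta_0-n/p)}$ for the inner factor, and combining with the decay \eqref{eq:rDecay} of $\|r\|_{L^p}$ yields $|I_{21}|=\|\rho_0\|_{L^\infty(\mathcal{K})}\|\tilde v\|_{L^\infty(\mathcal{K})}\,O(\tau^{-(n+\alpha_0+\beta_0+\sigma_0)})$. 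The term $I_{22}$ is handled analogously: its $1\cdot e^{\eta\cdot x}$ contribution is controlled directly by \eqref{eq:EstiInt} and produces $O(\tau^{-(n+\alpha_0+\beta_0+\sigma)})$, while its $r\cdot e^{\eta\cdot x}$ contribution gives the strictly smaller $O(\tau^{-(n+\alpha_0+\beta_0+\sigma+\sigma_0)})$. Summing these three estimates gives \eqref{eq:IntVanishv}.

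No genuinely new analytic tool is required beyond \eqref{eq:EstiInt}, H\"older's inequality, and the hypothesis \eqref{eq:rDecay}; the entire argument is a bookkeeping exercise in dimensional counting. The only conceptual point worth flagging is that, unlike in Lemma \ref{lem:ContraV}, no $\eta$ factor (and hence no $\tau$) is generated by differentiating $w$, which is precisely what accounts for the improved decay rate $\tau^{-(n+\alpha_0+\beta_0)}$ obtained here in \eqref{eq:IntVanishv2} versus the $\tau^{-(n+\alpha+\beta-1)}$ appearing in \eqref{eq:IntVanishV2}.
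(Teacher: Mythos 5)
Your proposal is correct and follows essentially the same route as the paper's own proof: the identical decomposition into $I_{20}$, $I_{21}$, $I_{22}$, the estimate of $I_{20}$ via \eqref{eq:EstiInt}, of $I_{21}$ via H\"older's inequality combined with \eqref{eq:rDecay}, and of $I_{22}$ by the analogous argument. Your closing remark about the absence of an $\eta$ factor (hence no extra power of $\tau$) correctly identifies why the decay here improves on that of \eqref{eq:IntVanishV2}.
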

\begin{proof}
	We first observe from \eqref{eq:ExpanR} and \eqref{eq:Expanv0} that
	\begin{equation*}
	(\rho-1)\gamma^{-1/2}v=\rho_0(\hat{x})\tilde{v}(\hat{x})|x|^{\alpha_0+\beta_0} +O(|x|^{\alpha_0+\beta_0+\sigma}).
	\end{equation*}
	Substituting the form \eqref{eq:CGO} of $w$ we can split the integral 
	\begin{equation}\label{eq:IntSplitv}
	\int_{\mathcal{C}_{\epsilon}} (\rho-1) v w\,dx-I_{20}
	=I_{21}+I_{22},
	\end{equation}
	where the integrals $I_{20}$, $I_{21}$ and $I_{22}$ are defined by
	\begin{equation*}
	I_{20}:=\int_{\mathcal{C}_{\epsilon}} \rho_0(\hat x)\tilde{v}(\hat{x})|x|^{\alpha_0+\beta_0}e^{\eta\cdot x}
	dx,
	\end{equation*}
	\begin{equation*}
	I_{21}:=\int_{\mathcal{C}_{\epsilon}} \rho_0(\hat x)\tilde{v}(\hat{x})|x|^{\alpha_0+\beta_0}r(x)e^{\eta\cdot x} dx,
	\end{equation*}
	and
	\begin{equation*}
	I_{22}:=\int_{\mathcal{C}_{\epsilon}}O\pare{|x|^{\alpha_0+\beta_0+\sigma}}(1+r)e^{\eta\cdot x}dx,
	\end{equation*}
	which can be regarded as functions of $\tau$.	
	Applying similar arguments as in the proof of Lemma~\ref{lem:ContraV} and using \eqref{eq:EstiInt} we have
	\begin{equation*}
	\begin{split}
	|I_{20}|\lesssim&  \|\rho_0\tilde{v}\|_{L^{\infty}(\mathcal{K})}
	\int_{\mathcal{C}_{\epsilon}}e^{-\tau d\cdot x}|x|^{\alpha_0+\beta_0} \,dx
	\\&  = \|\rho_0\tilde{v}\|_{L^{\infty}(\mathcal{K})}\int_{0}^{\epsilon}\int_{\mathcal{K}}t^{n+\alpha_0+\beta_0-1}e^{- t \tau d\cdot \hat{x} } \,d\sigma(\hat{x})dt
	\\&\lesssim \|\rho_0\tilde{v}\|_{L^{\infty}(\mathcal{K})}
	\int_{0}^{\epsilon}t^{n+\alpha_0+\beta_0-1}e^{- t \tau \delta } \,dt
	\lesssim  \frac{\|\rho_0\tilde{v}\|_{L^{\infty}(\mathcal{K})}}{\tau^{n+\alpha_0+\beta_0}}.
	\end{split}
	\end{equation*}
	Making use of \eqref{eq:rDecay} we can derive
	\begin{equation*}
	\begin{split}
	|I_{21}|&\lesssim  \|\rho_0\tilde{v}\|_{L^{\infty}(\mathcal{K})}
	\int_{\mathcal{C}_{\epsilon}}re^{-\tau d\cdot x}|x|^{\alpha_0+\beta_0} \,dx
	\\&\lesssim  \|\rho_0\tilde{v}\|_{L^{\infty}(\mathcal{K})}\|r\|_{L^p(\mathcal{C}_{\epsilon})}\pare{\int_{\mathcal{C}_{\epsilon}}e^{-p' \tau d\cdot x}|x|^{p'(\beta_0+\alpha_0)}\,dx}^{1/p'}
	\\&\lesssim   \|\rho_0\tilde{v}\|_{L^{\infty}(\mathcal{K})}\frac{\|r\|_{L^p(\mathcal{C}_{\epsilon})}}{\tau^{n/p'+\beta_0+\alpha_0}}= \|\rho_0\tilde{v}\|_{L^{\infty}(\mathcal{K})}\frac{\|r\|_{L^p(\mathcal{C}_{\epsilon})}}{\tau^{n+\beta_0+\alpha_0-n/p}}
	\lesssim   \frac{\|\rho_0\tilde{v}\|_{L^{\infty}(\mathcal{K})}}{\tau^{n+\beta_0+\alpha_0+\sigma_0}}.
	\end{split}
	\end{equation*}
	Now the following estimate for $I_{22}$ can be obtained analogously from those for $I_{20}$ and $I_{21}$,
	\begin{equation*}
	|I_{22}|\lesssim   \frac{1}{\tau^{n+\beta_0+\alpha_0+\sigma}}
	+\frac{1}{\tau^{n+\beta_0+\alpha_0+\sigma+\sigma_0}}.
	\end{equation*}
	The proof is complete.
\end{proof}

The following result provides an estimate of an integral over $\mathcal{C}_{\epsilon}$ involving the solution $v$ of the problem \eqref{eq:TEVPlocal}. 
\begin{prop}\label{prop:Contra}
	Under the same notations and assumptions as in Lemmas~\ref{lem:ContraV} and \ref{lem:Contrav}, one must have
	\begin{equation}\label{eq:IntVanish0}
	\begin{split}
	&\abs{\int_{\mathcal{C}_{\epsilon}}\condcor_{\beta}(\hat{x})V(\hat{x})\cdot \eta|x|^{\alpha+\beta}e^{\eta\cdot x} dx} \\=&k^2\|\rho_0\|_{L^{\infty}(\mathcal{K})}\|\tilde{v}\|_{L^{\infty}(\mathcal{K})}\,O\big(\frac{1}{\tau^{n+\beta_0+\alpha_0}}\big)+k^2O\big(\frac{1}{\tau^{n+\beta_0+\alpha_0+\sigma}}\big)
	\\&+\|\condcor_{\beta}\|_{L^{\infty}(\mathcal{K})}\|V\|_{L^{\infty}(\mathcal{K})}\,O\big(\frac{1}{\tau^{n+\beta+\alpha}}\big)+ O\big(\frac{1}{\tau^{n+\beta+\alpha-1+\sigma}}\big),
	\end{split}	
	\end{equation}
	\begin{equation}\label{eq:IntVanishpho}
	\begin{split}
	&k^2\abs{\int_{\mathcal{C}_{\epsilon}}e^{\eta\cdot x} \tilde{v}(\hat x)\rho_0(\hat x)|x|^{\alpha_0+\beta_0}dx} \\=&k^2\|\rho_0\|_{L^{\infty}(\mathcal{K})}\|\tilde{v}\|_{L^{\infty}(\mathcal{K})}\,O\big(\frac{1}{\tau^{n+\beta_0+\alpha_0+\sigma_0}}\big)+k^2O\big(\frac{1}{\tau^{n+\beta_0+\alpha_0+\sigma}}\big)
	\\&+\|\condcor_{\beta}\|_{L^{\infty}(\mathcal{K})}\|V\|_{L^{\infty}(\mathcal{K})}\,O\big(\frac{1}{\tau^{n+\beta+\alpha-1}}\big)
	+ O\big(\frac{1}{\tau^{n+\beta+\alpha-1+\sigma}}\big),
	\end{split}	
	\end{equation}
	and
	\begin{equation}\label{eq:IntVanish}
	\begin{split}
	&\abs{\int_{\mathcal{C}_{\epsilon}}e^{\eta\cdot x} \left[
	-k^2\tilde{v}(\hat x)\rho_0(\hat x)|x|^{\alpha_0+\beta_0}
	+\condcor_{\beta}(\hat{x})V(\hat{x})\cdot \eta|x|^{\alpha+\beta}\right]dx} \\=&k^2\|\rho_0\|_{L^{\infty}(\mathcal{K})}\|\tilde{v}\|_{L^{\infty}(\mathcal{K})}\,O\big(\frac{1}{\tau^{n+\beta_0+\alpha_0+\sigma_0}}\big)+k^2O\big(\frac{1}{\tau^{n+\beta_0+\alpha_0+\sigma}}\big)
	\\&+\|\condcor_{\beta}\|_{L^{\infty}(\mathcal{K})}\|V\|_{L^{\infty}(\mathcal{K})}\,O\big(\frac{1}{\tau^{n+\beta+\alpha}}\big)+ O\big(\frac{1}{\tau^{n+\beta+\alpha-1+\sigma}}\big),
	\end{split}	
	\end{equation}
	as $\tau\to\infty$, for any $d\in\mathcal{K}'_{\delta}$ with some $\delta>0$.
\end{prop}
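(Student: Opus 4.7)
The plan is to derive the three estimates by directly combining the identity of Lemma~\ref{lem:IntCor} (as packaged in Lemma~\ref{lem:RapidDecay}) with the expansion estimates of Lemmas~\ref{lem:ContraV} and~\ref{lem:Contrav}, and then using the triangle inequality together with the ``direct'' bounds \eqref{eq:IntVanishV2} and \eqref{eq:IntVanishv2} to peel off each explicit integral. For brevity I will denote
\begin{equation*}
\mathcal{I}_V:=\int_{\mathcal{C}_\epsilon}\condcor_\beta(\hat x)V(\hat x)\cdot\eta\,|x|^{\alpha+\beta}e^{\eta\cdot x}\,dx,
\qquad
\mathcal{I}_v:=\int_{\mathcal{C}_\epsilon}\rho_0(\hat x)\tilde v(\hat x)\,|x|^{\alpha_0+\beta_0}e^{\eta\cdot x}\,dx.
\end{equation*}

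First I would obtain \eqref{eq:IntVanish}. Since Lemma~\ref{lem:RapidDecay} gives
\begin{equation*}
\int_{\mathcal{C}_\epsilon}(\condcor-1)\nabla v\cdot\nabla w\,dx - k^2\int_{\mathcal{C}_\epsilon}(\rho-1)vw\,dx = O(\tau e^{-\delta\tau\epsilon}),
\end{equation*}
and since the exponential decay on the right is negligible compared to any polynomial rate in $\tau$, it suffices to insert the expansions of the two integrals furnished by Lemmas~\ref{lem:ContraV} and~\ref{lem:Contrav}. Namely, \eqref{eq:IntVanishV} replaces the first integral by $\mathcal{I}_V$ modulo errors $\|\condcor_\beta\|_\infty\|V\|_\infty O(\tau^{-(n+\alpha+\beta)})+O(\tau^{-(n+\alpha+\beta-1+\sigma)})$, and \eqref{eq:IntVanishv} replaces the second integral by $\mathcal{I}_v$ modulo errors $\|\rho_0\|_\infty\|\tilde v\|_\infty O(\tau^{-(n+\alpha_0+\beta_0+\sigma_0)})+O(\tau^{-(n+\alpha_0+\beta_0+\sigma)})$. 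Summing these errors via the triangle inequality yields exactly the right-hand side of \eqref{eq:IntVanish}.

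Second, to obtain \eqref{eq:IntVanish0} I would write $|\mathcal{I}_V|\le k^2|\mathcal{I}_v|+|\mathcal{I}_V-k^2\mathcal{I}_v|$. The first term on the right is controlled by the ``direct'' bound \eqref{eq:IntVanishv2} from Lemma~\ref{lem:Contrav}, which gives $k^2|\mathcal{I}_v|\le k^2\|\rho_0\|_\infty\|\tilde v\|_\infty O(\tau^{-(n+\beta_0+\alpha_0)})$; the second term is controlled by \eqref{eq:IntVanish} just proved. One checks that the extra $\tau^{-\sigma_0}$ factor appearing in the Lemma~\ref{lem:Contrav} error term is absorbed into the $O(\tau^{-(n+\beta_0+\alpha_0)})$ contribution, and the remaining error terms are precisely those listed on the right-hand side of \eqref{eq:IntVanish0}. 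An entirely symmetric argument, this time bounding $k^2|\mathcal{I}_v|\le |\mathcal{I}_V|+|\mathcal{I}_V-k^2\mathcal{I}_v|$ and invoking the direct bound \eqref{eq:IntVanishV2} from Lemma~\ref{lem:ContraV} together with \eqref{eq:IntVanish}, yields \eqref{eq:IntVanishpho}; here the $O(\tau^{-(n+\beta+\alpha)})$ error term from Lemma~\ref{lem:ContraV} is absorbed into the weaker $O(\tau^{-(n+\beta+\alpha-1)})$ contribution coming from \eqref{eq:IntVanishV2}.

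I do not foresee any genuine obstacle in carrying out this argument: the three lemmas have been set up to slot together exactly, and the only bookkeeping required is keeping track of which error terms absorb into which dominant terms under the hypothesis $\sigma>0$, $\sigma_0>0$. The conceptual content lies not in this proof but in the observation it encodes, that the transmission-type identity of Lemma~\ref{lem:IntCor}, viewed asymptotically in $\tau$, forces a near-cancellation between the leading-order test integrals $\mathcal{I}_V$ and $k^2\mathcal{I}_v$ that is sharper than what either estimate gives in isolation; this is what will later allow contradictions to be extracted at corner points.
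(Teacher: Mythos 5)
Your proposal is correct and follows essentially the same route as the paper: the paper proves \eqref{eq:IntVanish} by exactly the three-term decomposition you describe (adding and subtracting the leading integrals and invoking \eqref{eq:DecayRapi}, \eqref{eq:IntVanishV}, \eqref{eq:IntVanishv}), and then obtains \eqref{eq:IntVanish0} and \eqref{eq:IntVanishpho} from \eqref{eq:IntVanish} together with the direct bounds \eqref{eq:IntVanishV2} and \eqref{eq:IntVanishv2}, just as you do with the triangle inequality. Your bookkeeping of which error terms absorb into which (using $\sigma_0>0$ and the loss of one power of $\tau$ between \eqref{eq:IntVanishV2} and \eqref{eq:IntVanishV}) is accurate.
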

\begin{proof}
	This is a direct consequence of Lemmas~\ref{lem:RapidDecay}, \ref{lem:ContraV} and \ref{lem:Contrav}, more precisely, by rewriting
	\begin{equation*}
	\begin{split}
	&\int_{\mathcal{C}_{\epsilon}}e^{\eta\cdot x} \left[
	-k^2\tilde{v}(\hat x)\rho_0(\hat x)|x|^{\alpha_0+\beta_0}
	+\condcor_{\beta}(\hat{x})V(\hat{x})\cdot \eta|x|^{\alpha+\beta}\right]dx
	\\=& \,k^2\int_{\mathcal{C}_{\epsilon}} \left[(\rho-1) v w
	-\rho_0(\hat x)\tilde{v}(\hat{x})|x|^{\alpha_0+\beta_0}e^{\eta\cdot x}
	\right]dx
	\\&-\int_{\mathcal{C}_{\epsilon}}
	\left[\pare{\condcor-1} \nabla v\cdot\nabla w
	- \condcor_{\beta}(\hat{x})V(\hat{x})\cdot \eta|x|^{\alpha+\beta}e^{\eta\cdot x}\right]dx
	\\&+\int_{\mathcal{C}_{\epsilon}}\pare{\condcor-1} \nabla v\cdot\nabla w-k^2(\rho-1) v w \,dx,
	\end{split}
	\end{equation*}
	and using the estimates \eqref{eq:DecayRapi}, \eqref{eq:IntVanishV}, \eqref{eq:IntVanishv2}, \eqref{eq:IntVanishV2} and \eqref{eq:IntVanishv}.
\end{proof}
\begin{rem}
	Under sufficient regularity, the term $\condcor_{\beta}(\hat{x})|x|^{\beta}$ in \eqref{eq:ExpanG} can be regarded as the first non-zero term, which is a homogeneous polynomial in this case, of the Taylor expansion for $\condcor^{-1/2}(\condcor-1)$ around $x=0$.
	The same situation is true for  \eqref{eq:ExpanV}, \eqref{eq:ExpanR} and \eqref{eq:Expanv0}, concerning $V$,  $\rho$ and $v$, respectively.
\end{rem}
\begin{rem}
	Later on, we will present some situations or conditions which would yield a contradiction of \eqref{eq:IntVanish}. As a consequence, we will be able to characterize the non-scattering property as well as some behavior of transmission eigenfunctions under certain circumstances.
\end{rem}
\begin{cor}\label{Cor:ContraSim}
	Under the same notations and assumptions as in Proposition~\ref{prop:Contra}, assume further that $\beta=\beta_0=0$, $N:=\alpha\ge 0$  an integer and both $\gamma_{\beta}=\gamma_0$ and $\rho_0$ are constants. 
	If $\alpha_0\ge N=\alpha$, then one has
	\begin{equation}\label{eq:IntVanish3}
	\begin{split}
	&\condcor_0\abs{\int_{\mathcal{C}_{\epsilon}} V(\hat{x})\cdot \eta|x|^{N}e^{\eta\cdot x} \,dx } =o\big(\tau^{1-n-N}\big).
	\end{split}
	\end{equation}
	Otherwise if $\alpha_0=N-1$, then 
	\begin{equation}\label{eq:IntVanish4}
	\begin{split}
	&\abs{\condcor_0\int_{\mathcal{C}_{\epsilon}} V(\hat{x})\cdot \eta|x|^{N}e^{\eta\cdot x} \,dx-k^2\rho_0\int_{\mathcal{C}_{\epsilon}}e^{\eta\cdot x}\tilde{v}(\hat x)|x|^{N-1}
		dx}=o\big(\tau^{1-n-N}\big).
	\end{split}	
	\end{equation}
\end{cor}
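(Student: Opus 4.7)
The plan is to obtain both \eqref{eq:IntVanish3} and \eqref{eq:IntVanish4} as direct specializations of Proposition~\ref{prop:Contra}, with the only work being to substitute the hypothesized exponents and verify that every error term on the right-hand side decays strictly faster than the target rate $\tau^{1-n-N}$. No new CGO construction or integral identity is needed at this stage; the difficulty has already been absorbed into the proposition.

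For the case $\alpha_0\geq N$ I would apply \eqref{eq:IntVanish0}, whose left-hand side is exactly $\big|\int_{\mathcal{C}_\epsilon}\condcor_\beta(\hat x)V(\hat x)\cdot\eta\,|x|^{\alpha+\beta}e^{\eta\cdot x}\,dx\big|$. Substituting $\beta=0$, $\alpha=N$, and keeping in mind that $\gamma_\beta=\gamma_0$ and $\rho_0$ are constants, the four error terms become of orders $\tau^{-n-\alpha_0}$, $\tau^{-n-\alpha_0-\sigma}$, $\tau^{-n-N}$, and $\tau^{-(n+N-1+\sigma)}$. Since $\alpha_0\geq N$, the first two decay at least as fast as $\tau^{-n-N}$; and $\tau^{-n-N}/\tau^{1-n-N}=\tau^{-1}\to 0$, while the last is $\tau^{1-n-N-\sigma}=o(\tau^{1-n-N})$. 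So each term is $o(\tau^{1-n-N})$, which yields \eqref{eq:IntVanish3}.

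For the case $\alpha_0=N-1$ the two integrals in \eqref{eq:IntVanish4} have the \emph{same} leading order $\tau^{1-n-N}$ (the $k^2$-integral contributes $\tau^{-n-(N-1)}$ and the $\eta$-integral contributes $\tau\cdot\tau^{-n-N}$), so neither can be bounded individually. Instead I would invoke \eqref{eq:IntVanish}, whose left-hand side is precisely the difference appearing in \eqref{eq:IntVanish4} after substituting $\beta=\beta_0=0$, $\alpha=N$, $\alpha_0=N-1$. The four error terms then read $\tau^{1-n-N-\sigma_0}$, $\tau^{1-n-N-\sigma}$, $\tau^{-n-N}$, and $\tau^{1-n-N-\sigma}$. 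All are $o(\tau^{1-n-N})$ (using $\sigma,\sigma_0>0$ for the first, second, fourth, and $\tau^{-1}\to 0$ for the third), giving \eqref{eq:IntVanish4}.

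The only potential obstacle is cosmetic: ensuring that the $O$-bounds of Proposition~\ref{prop:Contra} actually upgrade to $o$ at the critical rate $\tau^{1-n-N}$. This is why one must carefully distinguish $\alpha_0\geq N$ from $\alpha_0=N-1$: in the first case the $\rho$-terms are of strictly smaller order and can be discarded, producing a single-integral estimate, while in the second they are of the same order as the $\gamma$-term and must be kept together as a cancellation-type difference. Apart from this exponent bookkeeping, no analytic step beyond what Proposition~\ref{prop:Contra} already supplies is required.
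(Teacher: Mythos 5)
Your proposal is correct and matches the paper's (implicit) argument: the corollary is stated without a separate proof precisely because it is the specialization of Proposition~\ref{prop:Contra} you describe, using \eqref{eq:IntVanish0} when $\alpha_0\ge N$ and \eqref{eq:IntVanish} when $\alpha_0=N-1$, with the exponent bookkeeping you carry out. Your remark that the two integrals share the leading order $\tau^{1-n-N}$ when $\alpha_0=N-1$, forcing the use of the difference estimate rather than two separate bounds, is exactly the point of distinguishing the two cases.
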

\begin{rem}
	The constants $\gamma_0$ and $\rho_0$ in Corollary~\ref{Cor:ContraSim} can be viewed as the contrast of the coefficients $\gamma(x)$ and $\rho(x)$ comparing to the constant $1$. In fact, under sufficient smoothness, one has
	\begin{equation*}
	\gamma_0=\pare{\gamma(0)-1}\gamma^{-1/2}(0)\quad\text{and}\quad 
	\rho_0=\pare{\rho(0)-1}\gamma^{-1/2}(0).
	\end{equation*}
\end{rem}

\medskip

\noindent
The above corollary highlights the complicated interplay of the behavior  near the corner of the contrasts in $\gamma$ and $\rho$, as well as of the fields $v$ and $\nabla v$,  in deciding  whether  (\ref{eq:IntVanishpho}) or (\ref{eq:IntVanish}) or both  are the dominating terms in the asymptotic expansions.
\section{Scattering by inhomogeneities with corners in 2D}\label{non-scat}

We revisit the problem \eqref{eq:TEVPlocal}, or \eqref{eq:ITPpde}, in this section. 
We prove in space dimension $n=2$ that, when certain conditions are satisfied, the asymptotics \eqref{eq:IntVanish} can not hold true unless the vector field $V$ is trivial.  As a consequence, we are able to derive some results concerning the ``never trivial'' scattering property of an inhomogeneous media in dimension two whose contrast in the main operator or/and the lower order term has a corner in its support.

\subsection{Preliminaries}
We first introduce some preliminary results. The first one is a standard result, see e.g. \cite{coltonkress}.
\begin{lem}
	If $v$ is a solution to the Helmholtz equation 
	\begin{equation}\label{eq:Helm}
	\Delta v +k^2 v=0.
	\end{equation}
	in an open domain in $\RR^n$, then $v$ is real analytic in that domain. 
\end{lem}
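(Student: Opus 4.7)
The plan is to prove analyticity via the Green's representation formula, exploiting that the fundamental solution of the Helmholtz operator is itself real analytic away from its singularity.

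First, since $\Delta + k^2$ is a constant-coefficient elliptic operator, standard interior elliptic regularity (bootstrapping through Sobolev spaces starting from a local $H^1$ solution, or Weyl-type lemmas) immediately gives that $v \in C^\infty(U)$, where $U$ denotes the open domain.

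Second, I would fix an arbitrary point $x_0 \in U$ and pick $r > 0$ small enough that $\overline{B_r(x_0)} \subset U$. Let $\Phi(x,y)$ denote the standard fundamental solution of the Helmholtz equation in $\RR^n$; in two dimensions this is $\frac{\im}{4}H_0^{(1)}(k|x-y|)$, with the analogous Hankel-function expression in higher dimension. By Green's third identity,
\begin{equation*}
v(x) = \int_{\partial B_r(x_0)} \bigl[ v(y)\, \partial_{\nu_y} \Phi(x,y) - \Phi(x,y)\, \partial_{\nu_y} v(y) \bigr]\, ds(y), \qquad x \in B_r(x_0).
\end{equation*}

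Third, I would use the crucial fact that $\Phi(x,y)$ is real analytic in $x$ for every $x \neq y$, which follows from the convergent series expansions of the Hankel functions away from zero. Since $y$ ranges over $\partial B_r(x_0)$ while $x$ stays in the strictly smaller ball, the integrand is bounded and jointly real analytic in $x$ uniformly in $y$. Differentiating under the integral sign and bounding $\partial_x^\alpha \Phi(x,y)$ on this compact set yields Cauchy-type estimates $|\partial^\alpha v(x_0)| \leq C\, R^{-|\alpha|} |\alpha|!$ for some $C, R > 0$ depending on $r$ and $k$. These estimates imply that the Taylor series of $v$ at $x_0$ has positive radius of convergence and converges to $v$ in a neighborhood of $x_0$. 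Since $x_0$ was arbitrary, $v$ is real analytic on $U$.

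The only mildly technical step is the Cauchy-type bound on $\partial_x^\alpha \Phi(x,y)$, but this reduces to the well-known analyticity of Hankel functions of a positive real argument, whose derivatives at a point $y_0 \neq 0$ grow factorially at the rate dictated by $\mathrm{dist}(y_0, 0)$; everything else is routine. As the proof of a classical fact stated without reference, the cleanest write-up would simply cite \cite{coltonkress} after sketching these points.
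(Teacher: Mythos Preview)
Your argument is correct: the Green's representation against the analytic fundamental solution, together with Cauchy-type derivative bounds, is the standard way to establish analyticity of Helmholtz solutions, and your sketch has no gaps.

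The paper, however, does not prove this lemma at all; it simply states it as a standard result and refers to \cite{coltonkress}. So your proposal is not so much a different route as an explicit expansion of what the paper leaves implicit. Your own final remark --- that the cleanest write-up is just to cite \cite{coltonkress} --- is exactly what the paper does.
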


\begin{lem}\label{lem:TaylorvV}
	Let $v$ be defined in a neighborhood of a point $x_0\in \RR^n$ and satisfy the Helmholtz equation \eqref{eq:Helm}.	
	Write the Taylor series of $v$ and $\nabla v$ around $x_0$ as
	\begin{equation*}
	v=\sum_{j=N_0}^{\infty} v_{j}\quad \text{and} \quad \nabla v=\sum_{j=N}^{\infty} V_{j}\,,
	\end{equation*}
	where $v_{j}$ and $V_{j}$ are homogeneous (vectorial) polynomials of $(x-x_0)$ with degree $j$ for each $j\in \mathbb{N}$, $v_{N_0}$ and $V_{N}$ are not identically zero, and $N_0,N\in\mathbb{N}$. 
	Then the following are true, in the neighborhood where both of the Taylor series converges, 
	\begin{enumerate}
		\item The vector field $V_j$ is curl free for each $j$.
		\item There holds $N\le N_0\le N+1$, except for the case that $N_0=0$ and $N=1$. In the latter case, one must have in addition, $v_1=\nabla\cdot V_2=\Delta v_3\equiv 0$.
		\item $V_{N}$ is divergence free if $N\neq 1$, and $\nabla\cdot V_{N}=-k^2v_0$ when $N=1$.
		\item The polynomials $v_{N_0}$, $v_{N_0+1}$, $V_{N}$ and $V_{N+1}$ are harmonic.
	\end{enumerate}
\end{lem}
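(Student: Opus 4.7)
The approach I would take rests on two observations: the analyticity of $v$ (from the preceding lemma) justifies convergent Taylor expansions in a neighborhood of $x_0$, and term-by-term differentiation combined with a shift of index gives $V_k=\nabla v_{k+1}$. Substituting $v=\sum_{j\ge 0} v_j$ into $\Delta v+k^2 v=0$ and separating by degree yields the basic recursion
\begin{equation*}
\Delta v_{j+2}=-k^2 v_j,\qquad j\ge 0,
\end{equation*}
with $\Delta v_0=\Delta v_1=0$ automatic. This recursion, together with $V_k=\nabla v_{k+1}$, will drive every part of the argument.

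Claim~(1) is immediate since each $V_j=\nabla v_{j+1}$ is a gradient. For Claim~(4), the recursion at indices $N_0$ and $N_0+1$ gives $\Delta v_{N_0}=-k^2 v_{N_0-2}=0$ and $\Delta v_{N_0+1}=-k^2 v_{N_0-1}=0$ by the minimality of $N_0$, with the edge values $N_0\in\{0,1\}$ handled by the automatic harmonicity of constants and linear polynomials. Since $\Delta V_j=\nabla\Delta v_{j+1}$, the same recursion applied at indices $N+1,N+2$ then yields harmonicity of $V_N$ and $V_{N+1}$.

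The substantive content lies in Claim~(2), which I would handle by splitting on $N_0$. If $N_0\ge 1$, then $v_{N_0}$ is a nonzero homogeneous polynomial of positive degree, so $\nabla v_{N_0}\not\equiv 0$; combined with $\nabla v_j\equiv 0$ for $j<N_0$, this forces $N=N_0-1$, i.e.\ $N_0=N+1$. If $N_0=0$, then $v_0=v(x_0)\ne 0$ and the recursion gives $\Delta v_2=-k^2 v_0\ne 0$, hence $v_2\not\equiv 0$ and $N\in\{0,1\}$. The subcase $N=0$ corresponds to $v_1\not\equiv 0$ and gives $N=N_0$; the subcase $N=1$ is the stated exception, in which $v_1\equiv 0$ by definition of $N$, and then the recursion immediately yields $\Delta v_3=-k^2 v_1\equiv 0$, so that $\nabla\cdot V_2=\Delta v_3\equiv 0$. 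Claim~(3) is then a one-line consequence: $\nabla\cdot V_N=\Delta v_{N+1}=-k^2 v_{N-1}$ vanishes whenever $N-1<N_0$, which covers every situation except $N_0=0,N=1$, and reduces to $-k^2 v_0$ precisely when $N=1$.

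The one genuine subtlety is the exceptional case $N_0=0, N=1$, which arises from the fact that constant terms of $v$ do not appear in $\nabla v$. Once one observes that the Helmholtz recursion forces $v_2\not\equiv 0$ as soon as $v_0\ne 0$, and uses the elementary fact that a nonzero homogeneous polynomial of positive degree has nonvanishing gradient, the remainder of the proof is straightforward bookkeeping with the recursion.
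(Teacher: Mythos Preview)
Your proof is correct and follows essentially the same approach as the paper: both rely on the identification $V_j=\nabla v_{j+1}$ and the Helmholtz recursion $\Delta v_{j+2}=-k^2 v_j$ to establish all four claims. The only organizational difference is in Claim~(2): the paper first derives the two-sided bound $N-1\le N_0\le N+1$ and then rules out $N_0=N-1$ unless $N_0=0$, whereas you split directly on whether $N_0\ge 1$ or $N_0=0$ and in fact obtain the sharper conclusion that $N_0\ge 1$ forces $N_0=N+1$ exactly.
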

\begin{proof}
	Notice that 
	\begin{equation*}
	\nabla v = \sum_{j=N_0}^{\infty} \nabla v_{j},
	\end{equation*}
	with $\nabla v_{j}$ homogeneous vectorial polynomials of degree $j-1$, which is curl free, for each $j\ge 1$. Hence each $V_j$ is curl free, and we also observe $N_0-1\le N$.
	On the other hand from the Helmholtz equation we have
	\begin{equation*}
	-k^2v=\nabla\cdot\nabla v=\sum_{j=N}^{\infty} \nabla\cdot V_{j}\,.
	\end{equation*}
	Compare it to the original Taylor series of $v$ we obtain that $N-1\le N_0$.
	Now let us look at the case when $N_0=N-1\ge  0$. If $N\neq 1$, then $\nabla v_{N_0}=\nabla v_{N-1}$ is either identically zero or a homogeneous vectorial polynomial of degree $N-2\ge 0$. However, we know that the first nonzero term from the Taylor series of $\nabla v$ should be $V_{N}$. Therefore, we must have $\nabla v_{N-1}\equiv 0$, which implies that $v_{N-1}=v_{N_0}$ is a constant, namely, $N=1$.
	Next, we verify that $\nabla\cdot V_{N}=0$ when $N\neq 1$. If $N=0$, this is trivial since $V_{0}$ is a constant vector. For $N\ge 2$, we have that $\nabla\cdot V_{N}=-k^2v_{N-1}=0$, because we have shown that $v_{j}\equiv 0$ for all $j\le N-1$. 
	The last statement is known, see, \cite{BPS14,PSV17}. It can be seen directly by taking Laplacian on each term of the Taylor series and using the fact that both $v$ and $\nabla v$ solve the Helmholtz equation.
\end{proof}

We are now in a position to introduce an estimate which can be related to \eqref{eq:IntVanish}. 
In the following, we shall restrict ourselves only in dimension $n=2$. However, similar estimates and results are expected for dimension three or higher. Under this consideration, we still keep the notation $n$, instead of $2$, and specify $n=2$ when needed.

We define our local corner first.
Denote $\psi_0\in(0,\pi)$ as the aperture of a (convex) corner. Given positive constants $\epsilon$ and $\delta$, let $\mathcal{K}=\{(\cos\psi,\sin\psi);\ 0<\psi<\psi_0\}$, ${\mathcal C}$, ${\mathcal C}_\epsilon$ and $\mathcal{K}'_{\delta}$ be defined accordingly as in the beginning of  Section \ref{sec:corner}.
In particular, we remind here that $\mathcal{K}'_{\delta}$ is an open set of $\Ss^{n-1}$ where elements $d$ satisfy 
\eqref{eq:dProperty}.
\begin{lem}\label{lem:DecayExact}
	Let $n=2$, and let the complex vector $\eta$ be of the form \eqref{eq:eta} with $\tau>0$ and $d\in \mathcal{K}'_{\delta}$. 
	Given $N\in \mathbb{N}$, let $\tilde{V}=\tilde{V}(x)$ be the gradient of a homogeneous polynomial of degree $N+1$ which is harmonic. 
	Suppose that $\tilde{V}$ is not identically zero.
	Then one must have 
	\begin{equation}\label{eq:decayInt}
	\int_{\mathcal{C}_{\epsilon}}e^{\eta\cdot x} \tilde{V}\cdot \eta \,dx=C_0\tau^{1-n-N}+o\pare{\tau e^{-\epsilon \tau /2}},
	\end{equation}
	with a constant $C_0$ independent of $\tau$.
	Moreover, if $C_0$ is zero when taking both the two opposite directions of $d^{\perp}$ for fixed $d$, then one must have
	\begin{equation}\label{eq:Angle}
	\psi_0=\frac{2l\pi}{n+2N}=\frac{l\pi}{1+N}\in(0,\pi),\qquad   \mbox{i.e., } N=\frac{\pi}{\psi_0}l-1\in\mathbb{N},
	\end{equation}
	for some $l\in\mathbb{N}$.
\end{lem}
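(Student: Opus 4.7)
The plan is to reduce the integral to the form of \eqref{eq:EstiInt} by passing to polar coordinates and then to analyze the resulting angular integral via complex-variable techniques special to $n=2$.

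\emph{Step 1: Polar factorization.} Writing $x = t\hat{x}$ with $\hat{x}=(\cos\psi,\sin\psi)$, $0<\psi<\psi_0$, $0<t<\epsilon$, and using that $\tilde V$ is homogeneous of degree $N$, I would rewrite
\[
\int_{\mathcal{C}_\epsilon} e^{\eta\cdot x}\,\tilde V\cdot \eta\,dx
=\int_0^{\psi_0}\tilde V(\hat x)\cdot \eta \left(\int_0^\epsilon t^{N+1}e^{t\,\eta\cdot\hat x}\,dt\right)d\psi.
\]
Since $\eta=-\tau(d+\im d^\perp)$ and $d\in \mathcal{K}'_\delta$, one has $\Re(-\eta\cdot\hat x)=\tau\,d\cdot\hat x\ge \tau\delta>0$ uniformly in $\hat x\in \mathcal{K}$. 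Applying \eqref{eq:EstiInt} with $b=N+2$ and $\mu=-\eta\cdot\hat x$ to the inner integral yields an exponentially small remainder and the main term $\Gamma(N+2)/(-\eta\cdot\hat x)^{N+2}$. Factoring $\tau$ out identifies
\[
\int_{\mathcal{C}_\epsilon}e^{\eta\cdot x}\tilde V\cdot\eta\,dx
=-\Gamma(N+2)\,\tau^{-(N+1)}\!\int_0^{\psi_0}\!\frac{\tilde V(\hat x)\cdot (d+\im d^\perp)}{\bigl((d+\im d^\perp)\cdot\hat x\bigr)^{N+2}}\,d\psi + o\bigl(\tau e^{-\epsilon\tau/2}\bigr),
\]
which is \eqref{eq:decayInt} with $n=2$ and $C_0$ equal to the displayed angular integral times $-\Gamma(N+2)$.

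\emph{Step 2: Complex variable reduction.} For the second statement, I would exploit that in $\RR^2$ every homogeneous harmonic polynomial of degree $N+1$ is the real part of $a z^{N+1}$ for some nonzero $a\in\CC$, where $z=x_1+\im x_2$. Writing $d=(\cos\phi,\sin\phi)$ and the unit perpendicular $d^\perp=(-\sin\phi,\cos\phi)$, a direct calculation gives the very clean identities
\[
(d+\im d^\perp)\cdot\hat x=e^{\im(\psi-\phi)},\qquad \tilde V(\hat x)\cdot(d+\im d^\perp)=e^{-\im\phi}\,\overline{P'(z)}\Big|_{z=e^{\im\psi}},
\]
where $P(z)=az^{N+1}$. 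Consequently the angular integrand collapses to a pure exponential $c_+\,e^{-\im(2N+2)\psi}$ with $c_+\neq 0$ (since $\tilde V\not\equiv 0$ forces $a\neq 0$). The analogous calculation with $d^\perp$ replaced by $-d^\perp$ yields an integrand $c_-\,e^{\im(2N+2)\psi}$ with $c_-\neq 0$.

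\emph{Step 3: Vanishing forces the angle condition.} Integrating the exponentials on $(0,\psi_0)$ gives
\[
\int_0^{\psi_0} e^{\pm\im(2N+2)\psi}\,d\psi=\frac{e^{\pm\im(2N+2)\psi_0}-1}{\pm\im(2N+2)},
\]
which vanishes if and only if $(2N+2)\psi_0\in 2\pi\mathbb{Z}$, i.e.\ $\psi_0=l\pi/(N+1)$ for some $l\in\mathbb{N}$. Since $\psi_0\in(0,\pi)$ this gives exactly \eqref{eq:Angle} (using $n=2$). If both constants $C_0$ vanish, one must either have both $c_\pm=0$, which forces $a=0$ and contradicts $\tilde V\not\equiv 0$, or the exponential integral must vanish, giving the angle condition.

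\emph{Expected obstacle.} The only nontrivial point is the algebraic identity of Step~2 that produces the single exponential $e^{\pm\im(2N+2)\psi}$. I expect the computation to work cleanly because the factor $(d\pm\im d^\perp)\cdot\hat x$ is, up to a phase depending only on $\phi$, simply $e^{\pm\im\psi}$; combined with the $z^N$-type behavior of $\overline{P'}$ (or $P'$) on the unit circle, all $\phi$- and $\psi$-dependences collapse to a single oscillatory term. This ``miraculous'' cancellation is precisely what makes the two-dimensional case tractable and is the analytic reason the statement singles out $n=2$.
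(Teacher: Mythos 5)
Your overall strategy -- polar coordinates plus the Laplace-transform estimate \eqref{eq:EstiInt}, followed by expressing the harmonic polynomial through complex exponentials so that the angular integrand collapses to a single mode $e^{\pm\im(2N+n)\psi}$ -- is exactly the paper's argument, and your Steps 1 and 3 are correct. The gap is in Step 2: it is \emph{not} true that every homogeneous harmonic polynomial of degree $N+1$ in $\RR^2$ is $\Re(az^{N+1})$ for some $a\in\CC$. That representation only covers \emph{real-valued} harmonic polynomials. In this lemma $\tilde V$ is the leading Taylor term of $\nabla u^{\In}$ for an incident field that is in general complex-valued (a plane wave $e^{\im k x\cdot d}$, a Herglotz function), so the relevant space is the complex span of $z^{N+1}$ and $\bar z^{N+1}$: one must write the potential as $P(z)=az^{N+1}+b\bar z^{N+1}$ with two \emph{independent} complex coefficients $a,b$ (the paper's $b_1,b_2$).

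This is not merely cosmetic, because it changes the logical structure of Step 3. Under your restricted ansatz you get $c_+\propto\bar a$ and $c_-\propto a$, so $\tilde V\not\equiv 0$ forces \emph{both} $c_\pm\neq 0$, and a single choice of $d^\perp$ would already yield the angle condition -- a strictly stronger statement than the lemma, and a false one. In the general case the two opposite choices of $d^\perp$ each annihilate one of the two modes: one pairing isolates the coefficient of $z^{N}$ in $\tilde V$ and the other isolates the coefficient of $\bar z^{N}$ (e.g.\ for $\tilde V=\nabla(z^{N+1})$ one of the two constants $C_0$ vanishes for \emph{every} aperture $\psi_0$). This is precisely why the hypothesis requires $C_0$ to vanish for both directions of $d^\perp$: only then do you get $a=b=0$, contradicting $\tilde V\not\equiv 0$, unless the common angular integral $\int_0^{\psi_0}e^{\pm\im(2N+2)\psi}\,d\psi$ vanishes, which gives \eqref{eq:Angle}. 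The fix is local -- redo Step 2 with $P(z)=az^{N+1}+b\bar z^{N+1}$ and track which coefficient survives for each sign -- and with that correction your argument coincides with the paper's proof.
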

\begin{rem}\label{rem:DecayExactN0}
	When $N=0$, namely when $\tilde{V}$ is a constant vector, then $C_0\neq 0$ for corners of any angle unless $\tilde{V}$ is the zero vector. 
	If $N=1$, then \eqref{eq:Angle} implies that $\psi_0=\pi/2$ is the only case for $C_0=0$ with $\tilde{V}$ not identically zero.
\end{rem}

Before proving the above lemma, it is insightful to remark that the above exception of $\psi_0$ (which translates to a particular form of $u^{\In}$) is not an exception for the potential case $(\Delta+k^2\rho)u=0$, i.e when $\gamma=1$ (see i.e \cite{BPS14}). For our case concerning the operator $\nabla \cdot \gamma\nabla +k^2\rho$, even if we  replace $\tilde{V}\cdot \eta$ in \eqref{eq:decayInt} with $\tilde{V}\cdot \vec{p}$, with $\vec{p}\in \CC^2$ and $\vec{p}\cdot \eta=0$, would not exempt us from getting  exceptional $\psi_0$ that yield $C_0=0$ and non zero $\tilde{V}$.  In fact, such a vector $\vec{p}$ would satisfy
\begin{equation*}
\vec{p}= c_0(d^{\perp}-\im d)
=-\im c_0(d+\im d^{\perp})
=-\im c_0\eta/\tau.
\end{equation*}
This basically means that even a ``direct'' CGO solution for $\nabla w$ of the form $\nabla w =\gamma^{1/2}(\vec{p}+\vec{r})e^{\eta\cdot x}$ with $\vec{p}\in \CC^2$ satisfying $\vec{p}\cdot \eta=0$ might not be of help in improving the results.
\begin{proof}[Proof of Lemma \ref{lem:DecayExact}]
	It is known that $(x_1\pm \im x_2)^{N}$ form a base of all homogeneous harmonic polynomial of degree $N$, where $x_1$ and $x_2$ denote the Cartesian components of $x\in\RR^2$. Therefore, the vector field $\tilde{V}$ can be written as
	\begin{equation*}
	\begin{split}
	\tilde{V}(x)
	&=b_1\pare{\begin{array}{c}
		1  \\ \im
		\end{array}}(x_1+ \im x_2)^N
	+b_2\pare{\begin{array}{c}
		\im 	\\1 
		\end{array}}(x_1- \im x_2)^N
	\\&=b_1 \pare{\begin{array}{c}
		1\\\im
		\end{array}}|x|^N e^{\im N\psi}
	+b_2\pare{\begin{array}{c}
		\im\\1
		\end{array}}|x|^N e^{-\im N\psi},
	\end{split}
	\end{equation*}
	where we have adopted the parametrization as $\hat{x}=(\cos\psi,\sin\psi)^{T}$.
	Denote $d=(\cos\varphi,\sin\varphi)^{T}$. Taking $\varphi\mp \pi/2$ as the angular coordinate of $d^{\perp}$, then $d^{\perp}=\pm(\sin\varphi,-\cos\varphi)^{T}$  
	and
	\begin{equation*}
	d+\im d^{\perp}
	=\pare{\begin{array}{c}
		\cos\varphi\pm\im\sin\varphi\\
		\sin\varphi\mp\im\cos\varphi
		\end{array}}
	=\pare{\begin{array}{c}
		1\\
		\mp \im
		\end{array}}e^{\pm\im\varphi}.
	\end{equation*}
	Under these notations we have
	\begin{equation}\label{eq:proof1}
	\eta\cdot\hat{x} 
	=-\tau e^{\pm\im\varphi}\pare{\cos\psi\mp\im\sin\psi}
	=-\tau e^{\pm\im(\varphi-\psi)}.
	\end{equation}
	and, depending on the opposite direction choices of $d^{\perp}$,
	\begin{equation}\label{eq:proof4}
	\begin{split}
	-|x|^{-N}\tilde{V}(x)\cdot \eta \tau^{-1} 
	=&2b_1  e^{\im (N\psi+\varphi)}
	\quad\text{or}\quad
	-|x|^{-N}\tilde{V}(x)\cdot \eta \tau^{-1}=2\im b_2  e^{-\im (N\psi+\varphi)}.
	\end{split}
	\end{equation}
	It is observed that
	\begin{equation*}
	\int_{\mathcal{C}_{\epsilon}} |x|^{N}e^{\eta\cdot x} e^{\pm\im N\psi} \,dx
	=
	\int_{0}^{\epsilon}\int_{0}^{\psi_0}t^{N+n-1}e^{-t\tau e^{\pm\im(\varphi-\psi)}}e^{\pm\im N\psi}d \psi dt.	  
	\end{equation*}
	Applying the estimate \eqref{eq:EstiInt} yields
	\begin{equation}\label{eq:proof5}
	\begin{split}
	&\int_{\mathcal{C}_{\epsilon}} |x|^{N}e^{\eta\cdot x} e^{\pm\im N\psi} \,dx
	-o\pare{e^{-\epsilon \tau /2}}
	\\=&\frac{\Gamma(N+n)}{\tau^{N+n}}e^{\mp\im\varphi(N+n)}
	\int_{0}^{\psi_0} e^{\pm\im(N+n)\psi}e^{\pm\im N\psi }d \psi 
	=C_{\pm}\frac{\Gamma(N+n)}{\tau^{N+n}}
	,
	\end{split}	  
	\end{equation}
	with the constant
	\begin{equation*}
	\begin{split}
	C_{\pm}&:=e^{\mp\im(N+n)\varphi}\int_{0}^{\psi_0} e^{\pm\im(2N+n)\psi}d \psi 
	=	 \frac{\pm\im}{2N+ n}	\pare{1-e^{\pm\im(2N+ n)\psi_0}}e^{\mp\im(N+n)\varphi}.
	\end{split}
	\end{equation*}
	Therefore, we have from 
	\eqref{eq:proof4} and \eqref{eq:proof5} that
	\begin{equation*}
	\begin{split}
	\int_{\mathcal{C}_{\epsilon}}e^{\eta\cdot x} \tilde{V}\cdot \eta \,dx
	&=-2b_1 e^{\pm\im \varphi}\tau \int_{\mathcal{C}_{\epsilon}}|x|^{N}e^{\pm\im N\psi} e^{\eta\cdot x}dx 
	\\&=-2b_1C_{\pm}\frac{\Gamma(N+n)}{\tau^{N+n-1}}\,e^{\pm\im\varphi}
	+o\pare{\tau e^{-\epsilon \tau /2}},
	\end{split}	  
	\end{equation*}
	where the constant $b_1$ should be in fact $\im b_2$ when the $\mp$ is taken as the $+$ sign.
	We have now verified \eqref{eq:decayInt} with the constant 
	\begin{equation*}
	\begin{split}
	C_0&
	=-2\im b_1\frac{\Gamma(N+n)}{2N+ n}e^{-\im(N+n-1)\varphi}
	\pare{1-e^{\im(2N+ n)\psi_0}},
	\end{split}
	\end{equation*}
	if we take $\varphi-\pi/2$ as the angular of $d^{\perp}$,
	or if we take $\varphi+\pi/2$ as the angular of $d^{\perp}$
	\begin{equation*}
	\begin{split}
	C_0&=-2 b_2\frac{\Gamma(N+n)}{2N+ n}e^{\im(N+n-1)\varphi}
	\pare{1-e^{-\im(2N+ n)\psi_0}}.
	\end{split}
	\end{equation*}
	If $C_0=0$ for both cases, then one must have either 
	\begin{equation}\label{eq:Nexept}
	(2N+ n)\psi_0=2l\pi, \quad\mbox{for some $l\in\mathbb{N}$},
	\end{equation}
	or $b_1=b_2=0$.
	However, the latter cannot be true since we have assumed the non-triviality of $\tilde{V}$.	
\end{proof}

The following result is known, see \cite{PSV17}. It was first established in \cite{BPS14} for rectangular corners and $\gamma=1$. 
\begin{lem}\label{lem:DecayExactv}
	Let $n=2$, and let $\eta$ be of the form \eqref{eq:eta} with $\tau>0$ and $d\in \mathcal{K}'_{\delta}$. Let $v_{N_0}$ be a homogeneous polynomial of degree $N_0\in\mathbb{N}$ which is harmonic. Then there is a constant $C_{1,N_0}$, which depends on $d$ but not on $\tau$, such that
	\begin{equation}\label{eq:decayIntv}
	\int_{\mathcal{C}_{\epsilon}}v_{N_0}(x)e^{\eta\cdot x} \,dx=C_{1,N_0}\tau^{-n-N_0}+o\pare{\tau e^{-\epsilon \tau /2}}.
	\end{equation}
	Moreover, the constant $C_{1,N_0}=C_{1,N_0}(d)$ cannot be zero for all directions $d$ in any open subset of $\Ss^{n-1}$.
\end{lem}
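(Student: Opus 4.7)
The plan is to follow the same template as the proof of Lemma~\ref{lem:DecayExact} --- polar decomposition of $v_{N_0}$ in the harmonic basis, switch to polar coordinates, then apply \eqref{eq:EstiInt} to the radial integral --- with the differences that the integrand here has no $\tilde V\cdot\eta\sim\tau$ factor (so the leading order is $\tau^{-n-N_0}$ rather than $\tau^{1-n-N}$) and that $v_{N_0}$ is scalar-valued.

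First I would expand $v_{N_0}$ in the standard basis $(x_1\pm\im x_2)^{N_0}$ of harmonic homogeneous polynomials of degree $N_0$ in the plane, writing
$$v_{N_0}(x)=|x|^{N_0}\bigl(b_1 e^{\im N_0\psi}+b_2 e^{-\im N_0\psi}\bigr)$$
in polar coordinates $\hat x=(\cos\psi,\sin\psi)^T$. Taking $d=(\cos\varphi,\sin\varphi)^T$ and using the two sign choices for $d^{\perp}$ as in the proof of Lemma~\ref{lem:DecayExact}, the identity $\eta\cdot\hat x=-\tau e^{\pm\im(\varphi-\psi)}$ still holds; since $d\in\mathcal{K}'_\delta$ forces $\Re(\tau e^{\pm\im(\varphi-\psi)})=\tau\cos(\varphi-\psi)>\tau\delta$ on the angular range $(0,\psi_0)$, the radial integral can be evaluated by \eqref{eq:EstiInt} with $\mu=\tau e^{\pm\im(\varphi-\psi)}$ and error $o(e^{-\tau\delta\epsilon/2})$. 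Extracting the $\tau^{-n-N_0}$ factor from the $\mu^{-(N_0+n)}$ term and then performing the remaining angular integration yields \eqref{eq:decayIntv} with
$$C_{1,N_0}(d)=\Gamma(N_0+n)\,e^{\mp\im(N_0+n)\varphi}\int_0^{\psi_0}\bigl(b_1 e^{\im N_0\psi}+b_2 e^{-\im N_0\psi}\bigr)\,e^{\pm\im(N_0+n)\psi}\,d\psi.$$

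For the ``moreover'' assertion, the explicit formula shows that the $d$-dependence of $C_{1,N_0}$ lives entirely in the phase $e^{\mp\im(N_0+n)\varphi}$, while the angular integral is a $\tau$- and $\varphi$-independent scalar. Consequently $C_{1,N_0}(d)$ either has constant (nonzero) modulus on $\Ss^1$ or vanishes identically, and vanishing on any open subset of $\Ss^1$ already forces identical vanishing. What remains is the main obstacle: to rule out that the angular integral itself --- an explicit linear combination of $(e^{\pm\im n\psi_0}-1)$ and $(e^{\pm\im(2N_0+n)\psi_0}-1)$ weighted by $b_1,b_2$ --- vanishes for a nontrivial $v_{N_0}$. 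For $N_0=0$ this is immediate from $\psi_0\in(0,\pi)$; for $N_0\ge 1$ I would invoke the strict inequality $|\sin((N_0+1)\psi_0)|<(N_0+1)|\sin\psi_0|$ on $(0,\pi)$ to show that the two complex weights above have unequal moduli, which, after accounting for any reality constraint on $(b_1,b_2)$ inherited from the problem, precludes cancellation. I expect this last elementary trigonometric inequality to be the only genuinely delicate step.
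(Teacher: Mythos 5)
First, a point of comparison: the paper does not prove this lemma at all --- it is quoted from \cite{PSV17} (and \cite{BPS14} for the rectangular case), and only the special case $N_0=0$ is proved in the text (Lemma~\ref{lem:DecayExact2}). So your proposal is measured against the method of Lemmas~\ref{lem:DecayExact} and~\ref{lem:DecayExact2} rather than against an in-paper proof. Your derivation of the expansion and of the explicit constant is correct and is exactly that computation: with $n=2$ one gets $C_{1,N_0}=\Gamma(N_0+n)\,e^{\mp\im(N_0+n)\varphi}\pare{b_1w_1^{\pm}+b_2w_2^{\pm}}$, where $w_1^{+}=\overline{w_2^{-}}=\pare{e^{\im(2N_0+n)\psi_0}-1}/\pare{\im(2N_0+n)}$ and $w_2^{+}=\overline{w_1^{-}}=\pare{e^{\im n\psi_0}-1}/(\im n)$; and your observation that $\abs{C_{1,N_0}(d)}$ is independent of $\varphi$, so that vanishing on an open set of directions forces identical vanishing, is also correct.

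The gap is in the final step. Unequal moduli $\abs{w_1^{\pm}}<\abs{w_2^{\pm}}$ (your sine inequality) do not preclude $b_1w_1^{\pm}+b_2w_2^{\pm}=0$ unless $\abs{b_1}=\abs{b_2}$, and no such ``reality constraint'' is available: $v_{N_0}$ is the leading Taylor term of a generally complex-valued incident field, and $b_2=\overline{b_1}$ only when $v_{N_0}$ is real. Worse, for a single fixed branch of $d^{\perp}$ the ``moreover'' claim is genuinely false: take $N_0=1$, $\psi_0=\pi/2$, $v_1=x_1+\im x_2$, and $d^{\perp}$ at angle $\varphi-\pi/2$; the angular integral becomes $\int_0^{\pi/2}e^{4\im\psi}\,d\psi=0$, so $C_{1,1}(d)\equiv 0$ for \emph{every} $d$ on that branch (equivalently, $\int_{Q}(x_1+\im x_2)e^{-e^{\im\varphi}(x_1-\im x_2)}\,dx=0$ over the infinite first quadrant). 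The repair is the device the paper uses in Lemma~\ref{lem:DecayExact}: take \emph{both} choices of $d^{\perp}$. The two vanishing conditions $b_1w_1^{+}+b_2w_2^{+}=0$ and $b_1\overline{w_2^{+}}+b_2\overline{w_1^{+}}=0$ form a $2\times 2$ linear system in $(b_1,b_2)$ with determinant $\abs{w_1^{+}}^2-\abs{w_2^{+}}^2$, which for $N_0\ge 1$ is nonzero precisely by your inequality $\abs{\sin((N_0+1)\psi_0)}<(N_0+1)\abs{\sin\psi_0}$ on $(0,\pi)$ (the case $N_0=0$ is immediate since then $A_{\pm}=(b_1+b_2)w_2^{\pm}$ with $w_2^{\pm}\neq 0$); hence $b_1=b_2=0$. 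The conclusion must accordingly be read as: for each $d$, at least one of the two admissible vectors $\eta$ built from $d$ yields $C_{1,N_0}\neq 0$ --- which is all that the proof of Theorem~\ref{thm:cornerscattPot} actually uses.
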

The next result is a particular case of Lemma~\ref{lem:DecayExactv}, when $N_0=0$. We give  a proof for the sake of obtaining the explicit value of the constant $C_{1,N_0}$ in \eqref{eq:decayIntv}, which will be used later.
\begin{lem}\label{lem:DecayExact2}
	Under the same notations as in Lemma~\ref{lem:DecayExactv}, one has
	\begin{equation}\label{eq:decayInt2}
	\int_{\mathcal{C}_{\epsilon}}e^{\eta\cdot x} \,dx=C_1\tau^{-n}+o\pare{\tau e^{-\epsilon \tau /2}},
	\end{equation}
	with a constant $C_1\neq 0$ which is independent of $\tau$.
\end{lem}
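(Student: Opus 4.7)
The plan is to reduce the integral to the same polar-coordinates computation used in the proof of Lemma~\ref{lem:DecayExact}, only now with the vector factor $\tilde V\cdot\eta$ removed. With $n=2$, parametrize $x=t\hat x$, $\hat x=(\cos\psi,\sin\psi)^T$, and $d=(\cos\varphi,\sin\varphi)^T$. The key identity \eqref{eq:proof1} already derived in the proof of Lemma~\ref{lem:DecayExact} gives $\eta\cdot\hat x=-\tau e^{\pm\im(\varphi-\psi)}$, so that
\[
\int_{\mathcal{C}_{\epsilon}}e^{\eta\cdot x}\,dx
=\int_{0}^{\epsilon}\int_{0}^{\psi_0} t\,e^{-\tau t\, e^{\pm\im(\varphi-\psi)}}\,d\psi\,dt,
\]
in which the inner (radial) integrand is of the precise form to which \eqref{eq:EstiInt} applies.

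Next, I would invoke \eqref{eq:EstiInt} with $b=n=2$ and $\mu=\tau e^{\pm\im(\varphi-\psi)}$. The assumption $d\in\mathcal{K}'_{\delta}$ means $\cos(\varphi-\psi)\ge \delta>0$ uniformly in $\psi\in(0,\psi_0)$, hence $\Re\mu\ge\tau\delta\to\infty$, legitimizing the expansion. One gets
\[
\int_{0}^{\epsilon} t\,e^{-\tau t\, e^{\pm\im(\varphi-\psi)}}\,dt
=\frac{1}{\tau^{2}}\,e^{\mp 2\im(\varphi-\psi)}+o\pare{e^{-\epsilon\tau\delta/2}},
\]
uniformly in $\psi$. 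Integrating in $\psi$ and pulling out the exponentially small remainder (whose size is unchanged, up to the factor $\psi_0$, after integration, and is certainly absorbed in $o(\tau e^{-\epsilon\tau/2})$) yields
\[
\int_{\mathcal{C}_{\epsilon}}e^{\eta\cdot x}\,dx
=\frac{e^{\mp 2\im\varphi}}{\tau^{2}}\int_{0}^{\psi_0}e^{\pm 2\im\psi}\,d\psi
+o\pare{\tau e^{-\epsilon\tau/2}}
=C_{1}\tau^{-n}+o\pare{\tau e^{-\epsilon\tau/2}},
\]
which is exactly \eqref{eq:decayInt2}.

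Finally, I would check $C_{1}\neq 0$ by evaluating the angular integral explicitly, giving
\[
C_{1}=e^{\mp 2\im\varphi}\cdot\frac{e^{\pm 2\im\psi_0}-1}{\pm 2\im}
=\pm e^{\mp 2\im\varphi}\,e^{\pm\im\psi_0}\sin\psi_0,
\]
so $|C_{1}|=\sin\psi_0$, which is strictly positive because $\psi_0\in(0,\pi)$. There is no real obstacle in this proof: it is just the $N=0$, scalar-integrand degeneration of the calculation in Lemma~\ref{lem:DecayExact}, with the pleasant feature that the angular factor $e^{\pm 2\im\psi_0}-1$ cannot vanish for convex corner apertures, so no exceptional angles arise.
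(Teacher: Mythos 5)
Your proof is correct and follows essentially the same route as the paper: polar coordinates, the identity $\eta\cdot\hat{x}=-\tau e^{\pm\im(\varphi-\psi)}$, the Laplace-transform estimate \eqref{eq:EstiInt} with $b=n$, and then the explicit angular integral, yielding the paper's constant $C_1=\pm\im\,\Gamma(n)n^{-1}e^{\mp\im n\varphi}\pare{1-e^{\pm\im n\psi_0}}$ specialized to $n=2$. The only (inconsequential) difference is that you additionally simplify to $|C_1|=\sin\psi_0$ to make the nonvanishing for $\psi_0\in(0,\pi)$ completely explicit, which the paper leaves implicit.
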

\begin{proof}
	Applying \eqref{eq:proof1} and \eqref{eq:EstiInt} we have
	\begin{equation*}
	\begin{split}
	\int_{\mathcal{C}_{\epsilon}}e^{\eta\cdot x} \,dx
	&=\int_{\mathcal{C}_{\epsilon}}e^{-\tau|x| e^{\pm\im(\varphi-\psi)}}\,dx
	=\int_{0}^s\int_{0}^{\psi_0}t^{n-1}e^{-t \tau  e^{\pm\im(\varphi-\psi)} }d\psi dt
	\\&=\frac{\Gamma(n)}{\tau^{n}}
	\int_{0}^{\psi_0} e^{\mp\im n(\varphi-\psi)}\, d\psi
	+o\pare{ \tau e^{-\epsilon \tau /2}}.
	\end{split}	  
	\end{equation*}
	Therefore, we have derived \eqref{eq:decayInt2} with the constant
	\begin{equation*}
	C_1=\Gamma(n) e^{\mp\im n\varphi}
	\int_{0}^{\psi_0} e^{\pm\im n\psi}d \psi 
	=\pm\im \Gamma(n)/ne^{\mp\im n\varphi}\pare{1-e^{\pm\im n\psi_0}},
	\end{equation*}
	where the plus or minus signs depend on the choice of direction or angular, $\varphi\mp\pi/2$, of the unit vector $d^{\perp}$. 
\end{proof}
\begin{lem}\label{lem:DecayExact3}
	Let the dimension $n=2$, and Let $\tilde{V}=\tilde{V}(x)$ be a fixed homogeneous polynomial for $x$ of degree $N=1$ which is curl free and satisfies $\nabla\cdot \tilde{V}=-k^2v_0\ne 0$, where $v_0$ is a constant.
	Let the complex vector $\eta=\eta(\tau,d)$ be of the form \eqref{eq:eta} with $\tau>0$ and $d\in \mathcal{K}'_{\delta}$, and let $\gamma_{0}$ and $\rho_0$ be two constants with $\gamma_{0}\neq 0$. 	
	Then one has 
	\begin{equation}\label{eq:decayInt4}
	\int_{\mathcal{C}_{\epsilon}}e^{\eta\cdot x} \tilde{V}\cdot \eta \,dx=\widetilde{C}_0\tau^{-n}+o\pare{\tau e^{-\epsilon \tau /2}},
	\end{equation}
	and
	\begin{equation}\label{eq:decayInt3}
	\condcor_0\int_{\mathcal{C}_{\epsilon}} e^{\eta\cdot x}\tilde{V}\cdot \eta  \,dx 
	-k^2v_0\rho_0\int_{\mathcal{C}_{\epsilon}} e^{\eta\cdot x} \,dx=\widetilde{C}_1\tau^{-n}+o\pare{\tau e^{-\epsilon \tau /2}}, 
	\end{equation}
	with constants $\widetilde{C}_0$ and $\widetilde{C}_1$ independent of $\tau$ but possibly dependent on $d$.
	Moreover, {$\widetilde{C}_1=0$ for two opposite directions of $d^{\perp}$ if and only if $\psi_0=\pi/2$ and $\rho_0=\gamma_{0}$, or $\psi_0\neq\pi/2$ and $\tilde{V}$ takes the following form
	\begin{equation}\label{eq:TaylorVfirst}
	\tilde{V}(x)
	=\frac{k^2 v_0}{2}~\nabla \pare{\pare{1-\dfrac{\rho_0}{\gamma_0}}x_1x_2\tan\psi_0-\frac{1}{2}\frac{\rho_0}{\gamma_0}x_1^2-\frac{1}{2}\pare{2-\dfrac{\rho_0}{\gamma_0}}x_2^2}.
	\end{equation}}
\end{lem}
\begin{proof}
	We apply the parametrization $\hat{x}=(\cos\psi,\sin\psi)^{T}$ as before and write
	\begin{equation*}
	\tilde{V}(x)=\pare{\begin{array}{c}
		b_{11}  \\ b_{21} 
		\end{array}}(x_1+ \im x_2)
	+\pare{\begin{array}{c}
		b_{12} 	\\ b_{22}
		\end{array}}(x_1- \im x_2)
	=\pare{\begin{array}{c}
		b_{11}  \\ b_{21} 
		\end{array}}e^{\im\psi}
	+\pare{\begin{array}{c}
		b_{12} 	\\ b_{22}
		\end{array}}e^{-\im\psi}.
	\end{equation*}
	It is obtained from the curl and divergence condition that
	\begin{equation*}
	b_{21}-\im b_{11}=\im k^2 v_0/2\quad\text{and}\quad
	b_{12}-\im b_{22}=- k^2v_0/2.
	\end{equation*}
	We adopt the notations in the proof of Lemma~\ref{lem:DecayExact} for $d$, $d^{\perp}$ and $\eta$.
	Then
		\begin{equation*}
		\eta\cdot \tilde{V}({x})
		=-\tau |x| e^{\pm\im\varphi}\pare{b_{\pm}e^{\pm\im\psi}-k^2 v_0\, e^{\mp \im \psi}/2},
		\end{equation*}
		where $b_+=b_{11}-\im b_{21}$ and $b_-=b_{12}+\im b_{22}$.
		By straightforward computation we have
		\begin{equation*}
		\begin{split}
		\int_{\mathcal{C}_{\epsilon}} |x|\,e^{-\tau |x| e^{\pm \im (\varphi-\psi)}} e^{\pm\im \psi} \,dx
		=&\int_{0}^{\psi_0}\int_{0}^{\epsilon}r^2 \,e^{-\tau r e^{\pm\im (\varphi-\psi)}} e^{\pm\im \psi} \,drd\psi
		\\=&\,C_{\pm,1}\frac{\Gamma(1+n)}{\tau^{1+n}}
		+o\pare{e^{-\epsilon \tau /2}}
		,
		\end{split}
		\end{equation*}	
		and similarly
		\begin{equation*}
		\begin{split}
		\int_{\mathcal{C}_{\epsilon}} |x|\,e^{-\tau |x| e^{\pm \im (\varphi-\psi)}} e^{\mp\im \psi} \,dx
		=&\,C_{\pm,2}\frac{\Gamma(1+n)}{\tau^{1+n}}
		+o\pare{e^{-\epsilon \tau /2}}
		,
		\end{split}
		\end{equation*}	
		with the constants
		\begin{equation*}
		C_{\pm,1}=\pm \im e^{\mp \im (n+1)\varphi}\,\frac{1-e^{\pm\im (n+2)\psi_0}}{n+2}\qquad\text{and}\qquad
		C_{\pm,2}=\pm \im e^{\mp \im (n+1)\varphi}\,\frac{1-e^{\pm\im n\psi_0}}{n}.
		\end{equation*}
	Therefore, we have derived \eqref{eq:decayInt4} with the constant
	{\begin{equation*}
		\begin{split}
		\widetilde{C}_0
		&= \pm \im \Gamma(1+n)e^{\mp\im n\varphi}
		\pare{k^2v_0\frac{1-e^{\pm\im n\psi_0}}{2n}-b_{\pm}\frac{1-e^{\pm\im (n+2)\psi_0}}{n+2}}
		\\&=\pm \im e^{\mp\im 2\varphi}\pare{k^2v_0\pare{1-e^{\pm\im 2\psi_0}}-b_{\pm}\pare{1-e^{\pm\im 4\psi_0}}}/2.
		\end{split}
		\end{equation*}}
 Notice that $e^{\pm\im 2\psi_0}\neq 1$ for $\psi_0\in(0,\pi)$, and that $e^{\pm\im 4\psi_0}= 1$ if $\psi_0=\pi/2$. Then $\tilde{C}_0$ can never be zero when $\psi_0=\pi/2$.
		Suppose that $\psi_0\neq\pi/2$ and $\widetilde{C}_0=0$ for both $\pm$ signs. Then 
		\begin{equation*}
		b_{\pm}	= \frac{k^2v_0}{1+e^{\pm\im 2\psi_0}},
		\end{equation*} 
		in which case
		\begin{equation*}
		\tilde{V}(x)=\frac{k^2v_0}{2}\pare{\begin{matrix}
			x_2\tan\psi_0\\x_1\tan\psi_0-2x_2
			\end{matrix}}
		=\frac{k^2 v_0}{2}~\nabla \pare{x_1x_2\tan\psi_0-x_2^2}.
		\end{equation*}
Further, combining \eqref{eq:decayInt4} with \eqref{eq:decayInt2} 
	we arrive at \eqref{eq:decayInt3} with the constant
	\begin{equation*}
	\widetilde{C}_1=\gamma_0 \widetilde{C}_0 -k^2 v_0\rho_0 C_1,
	\end{equation*}
namely,
		\begin{equation*}
		\mp\, 2\im  e^{\pm \im 2\varphi}\widetilde{C}_1
		= 
		k^2v_0\pare{\gamma_0-\rho_0}\pare{1-e^{\pm\im 2\psi_0}}-\gamma_0 \,b_{\pm}\pare{1-e^{\pm\im 4\psi_0}}.
		\end{equation*}	
	Similar as before, we observe that $\widetilde{C}_1=0$ implies $\gamma_0=\rho_0$ if $\psi_0=\pi/2$. 
	Otherwise if $\psi_0\neq \pi/2$ then $\widetilde{C}_1=0$ for both $\pm$ signs yields
	\begin{equation*}
	\gamma_0\neq\rho_0\qquad\text{and}\qquad b_{\pm}	= \frac{k^2v_0}{1+e^{\pm\im 2\psi_0}}\pare{1-\frac{\rho_0}{\gamma_0}},
	\end{equation*}
	and as a consequence,
	\begin{equation*}
	\begin{split}
	\tilde{V}(x)
	&=\frac{k^2v_0}{2}\pare{\begin{matrix}
		-\dfrac{\rho_0}{\gamma_0}x_1+\pare{1-\dfrac{\rho_0}{\gamma_0}}x_2\tan\psi_0
		\\\pare{1-\dfrac{\rho_0}{\gamma_0}}x_1\tan\psi_0-\pare{2-\dfrac{\rho_0}{\gamma_0}}x_2
		\end{matrix}}
	\\&=\frac{k^2 v_0}{2}~\nabla \pare{\pare{1-\dfrac{\rho_0}{\gamma_0}}x_1x_2\tan\psi_0-\frac{1}{2}\frac{\rho_0}{\gamma_0}x_1^2-\frac{1}{2}\pare{2-\dfrac{\rho_0}{\gamma_0}}x_2^2}.
	\end{split}
	\end{equation*}
\end{proof}

\subsection{Do corners in 2D always scatter?}\label{Sec:CornerScat}
Now we return our attention to the scattering problem governed by \eqref{eq:MainGov1}. We first introduce the mathematical definition of the corners in the  support of the inhomogeneity we are concerned with.  Roughly speaking, we are able to deal with  constitutive material properties $a$ and $c$, whose support of the contrast  to the background, i.e. $\supp (c-1)$ or  $\supp (\condt-1)$, contains a convex corner which could be small.  As a particular case when $\condt\equiv 1$, our results recover those proven in \cite{BPS14}, \cite{ElH15} and \cite{ElH18}.  We require some regularity of $a$ and $c$ locally around the corner. We do not need to impose any additional  assumptions on $a$ and $c$ elsewhere (see Figure \ref{conf}).

\begin{defn}\label{def:corner}
	A function $f$ is said to have a corner at its support if the following is satisfied: Let a simple connected domain $\Omega\in \RR^2$ be such that $\supp f \subseteq \overline{\Omega}$. There is a point $x_0\in \partial\Omega$, a ball $B_{\epsilon}(x_0)$ of radius $\epsilon>0$ centered at $x_0$ and a cone $\mathcal{C}(x_0):=\{x\in {\mathbb R}^2:  \widehat{x-x_0} \in {\mathcal K}\}$ with ${\mathcal K}=\left\{(\cos\psi,\sin\psi);\ 0<\psi<\psi_0\right\}$, such that 
	\begin{equation*}
	\overline{\Omega}\cap B_{\epsilon}(x_0)=\supp f \cap  B_{\epsilon}(x_0) =\mathcal{C}(x_0)\cap  B_{\epsilon}(x_0):=\mathcal{C}_\epsilon(x_0).
	\end{equation*}
	In this case, we call $x_0=[x_0; \mathcal{C}_{\epsilon}(x_0)]$ a {\it corner  of (the support of) $f$ of radius $\epsilon$}.
\end{defn}

\begin{defn}\label{def:corReg}
	Given constitutive material properties  $a\in L^{\infty}(\RR^2)$, $c\in L^{\infty}(\RR^2)$, suppose that $x_0=[x_0; \mathcal{C}_\epsilon(x_0)]$ is a corner of either $a-1$ or $c-1$ with radius $\epsilon$.
	The corner $x_0$ is called {\it regular}, with respect to $a$ and $c$, if there {exist $\gamma\in L^{\infty}(\RR^2)$ and  $\rho\in L^{\infty}(\RR^2)$ satisfying the following:
	\begin{enumerate}
		\item There is a constant $\varepsilon_0>0$ such that $\gamma\in H^{3,1+\varepsilon_0}$ and $\rho\in H^{1,1+\varepsilon_0}$.
		\item $\gamma|_{\mathcal{C}_{\epsilon}}=\condt|_{\mathcal{C}_{\epsilon}}\ $ in $\ H^{3,1+\varepsilon_0}(\mathcal{C}_{\epsilon})\cap L^{\infty}(\mathcal{C}_{\epsilon})\ \ $ and $\ \ \rho|_{\mathcal{C}_{\epsilon}}=c|_{\mathcal{C}_{\epsilon}}\ $ in $\ H^{1,1+\varepsilon_0}(\mathcal{C}_{\epsilon})\cap L^{\infty}(\mathcal{C}_{\epsilon})$.
		\item  There are constants $\gamma_0,\rho_0$ and  some $\sigma>0$ such that
		\begin{equation}\label{eq:gamma}
		\pare{\gamma(x)-1}\gamma^{-1/2}(x)= \gamma_0 +O(|x-x_0|^{\sigma}),
		\end{equation}
		and
		\begin{equation}\label{eq:rho0}
		(\rho(x)-1)\gamma^{-1/2}(x)= \rho_0+O(|x-x_0|^{\sigma}),
		\end{equation}
		for almost all $x\in \mathcal{C}_{\epsilon}(x_0)$. 
		
	\end{enumerate}}
\noindent Moreover, by an abuse of terminology, we say that there is a conductivity jump (for $a(x)$) at $x_0$ if $\gamma_0\neq 0$, or a potential jump (for $c(x)$) if $\rho_0\neq 0$.
\end{defn}
{
\begin{rem}
	We note here that the first listed condition in Definition~\ref{def:corReg} suffices for $\gamma$ and $\rho$ to satisfy the assumptions in Proposition~\ref{prop:CGO} and Condition~\ref{cond1} with $n=2$ and $s=0,1$. 
	In particular, we can take $\tilde{p}=1+\varepsilon_0/2$ and $p\ge3(1+2\varepsilon_0)/(1-4\varepsilon_0)>3$.
	The property \eqref{eq:condqf} now holds by taking $1/p=1/\tilde{p}-1/(1+\varepsilon_0)$ 
	and using the H\"{o}lder's inequality $\|gh\|_{L^{\tilde{p}}}\le C\|g\|_{L^p}\|h\|_{L^{1+\varepsilon_0}}$.
\end{rem}
}
\medskip 
\noindent
Theorem \ref{thm:cornerscatt} and Theorem \ref{thm:cornerscattPot} in the following state our main results concerning the lack of non-scattering phenomena.
{To this end, we give a class of incident fields for which we cannot conclude yet whether or not they will be scattered by inhomogeneities with corners.
\begin{defn}\label{defn:Admissible}
		Given a corner of aperture $\psi_0$ and an incident field $u^{\In}$, denote $N\in\mathbb{N}$ as the order of the first nonzero term from the Taylor expansion of $\nabla u^{\In}$ at the corner.
		We say that the pair $(u^{\In},\psi_0)$ belongs to the class $\mathscr{E}$ if there holds $N=(l/\psi_0)\,\pi-1$ with some positive integer $l$.
\end{defn} }
\begin{thm}[Conductivity corner scattering]\label{thm:cornerscatt}
	Given $\condt, c\in L^{\infty}(\RR^2)$ satisfying \eqref{eq:gammEllip} and \eqref{eq:gammBound}, let $u=u^{\In}+u^{\Sc}$ be the total field of the scattering problem \eqref{eq:MainGov1}--\eqref{eq:Radiat}.	
	Suppose that there is a corner $x_0=[x_0; \mathcal{C}_{\epsilon}(x_0)]$ at the support of $a-1$ which is regular with respect to $a$ and $c$ in the sense of Definition~\ref{def:corReg}. 
	Assume further that there is a conductivity jump for $a$ at $x_0$, and that at $x_0$,  either $u^{\In}$ vanishes or both $u^{\In}$ and $\nabla u^{\In}$ are nonzero.		
	Then the scattered field $u^{\Sc}$ cannot be identically zero in the exterior of any bounded ball in $\RR^2$ except, \textbf{perhaps}, if {$(u^{\In},\psi_0)$ belongs to the class $\mathscr{E}$ with $\psi_0$ the aperture of $\mathcal{C}_{\epsilon}(x_0)$}.
%
%
\end{thm}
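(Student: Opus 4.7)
The strategy is a contradiction argument. Suppose $u^{\Sc}$ vanishes outside some ball. By Rellich's lemma the far field $u^{\infty}$ vanishes identically, and unique continuation in the homogeneous Helmholtz region propagates this to $u^{\Sc}\equiv 0$ on $\RR^2\setminus\overline{\Omega}$ for any simply-connected bounded Lipschitz $\Omega\supset \supp(a-1)\cup\supp(c-1)$ whose boundary is arranged to cut through the corner. The pair $u$, $v:=u^{\In}$ therefore satisfies the transmission system \eqref{eq:ITPpde}--\eqref{eq:ITPbc}. After translating $x_0$ to the origin, the local problem \eqref{eq:TEVPlocal} holds on $\mathcal{C}_\epsilon$ with the regularized coefficients $\gamma,\rho$ of Definition~\ref{def:corReg}, which coincide with $a,c$ on $\mathcal{C}_\epsilon$.

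Next I would apply Proposition~\ref{prop:CGO} with $n=2$, $s=1$, $p$ slightly larger than $3$ and $\tilde p$ slightly larger than $1$; Definition~\ref{def:corReg}(i) guarantees Condition~\ref{cond1}. The resulting CGO solution $w=\gamma^{-1/2}(1+r)e^{\eta\cdot x}$ of \eqref{eq:PDEomega} has sufficiently rapidly decaying $r$ to satisfy \eqref{eq:rDecay} (using $W^{1,p}\hookrightarrow L^p$ on the bounded set $\mathcal{C}_\epsilon$ and taking $\hat p=p$). Testing $w$ in Lemma~\ref{lem:IntCor} and applying Lemma~\ref{lem:RapidDecay} to absorb the boundary integral as $O(\tau e^{-\delta\tau\epsilon})$ for $d\in\mathcal{K}'_\delta$, the problem reduces to the asymptotic analysis of the volume integral on the left side of \eqref{eq:IntCor}.

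Now expand $v$ by analyticity at the origin. Lemma~\ref{lem:TaylorvV} yields $v=\sum_{j\geq N_0}v_j$ and $\nabla v=\sum_{j\geq N}V_j$ with $v_{N_0},V_N\not\equiv 0$; moreover either (A) $N_0\geq N$ and $V_N$ is curl-free and divergence-free, hence the gradient of a harmonic homogeneous polynomial of degree $N+1$, or (B) $N=1$, $N_0=0$, $v_0\neq 0$ and $\nabla\cdot V_1=-k^2v_0$. The conductivity jump gives $\gamma_0\neq 0$, and the expansions \eqref{eq:ExpanG}--\eqref{eq:Expanv0} of Corollary~\ref{Cor:ContraSim} hold with $\beta=\beta_0=0$, $\alpha=N$, $\alpha_0=N_0$. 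In Case (A), \eqref{eq:IntVanish3} combined with the exact asymptotic \eqref{eq:decayInt} of Lemma~\ref{lem:DecayExact} forces $C_0=C_0(d,d^{\perp})$ to vanish for every $d\in\mathcal{K}'_\delta$ and both orientations of $d^{\perp}$; by the second conclusion of Lemma~\ref{lem:DecayExact} this requires $N=(l/\psi_0)\pi-1$ for some positive integer $l$, i.e., $(u^{\In},\psi_0)\in\mathscr{E}$. In Case (B), \eqref{eq:IntVanish4} together with Lemma~\ref{lem:DecayExact3} forces the constant $\widetilde{C}_1$ of \eqref{eq:decayInt3} to vanish for all $d\in\mathcal{K}'_\delta$, which by the last assertion of that lemma yields $\psi_0=\pi/2$ and $\rho_0=0$; this again corresponds to $N=1=(1/\psi_0)\pi-1$ and hence $(u^{\In},\psi_0)\in\mathscr{E}$. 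Outside the exceptional class we reach a contradiction, and the theorem follows.

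The main obstacle is Case (B): because $N=1$ and $N_0=0$, the conductivity-driven and potential-driven contributions both sit at the same asymptotic order $\tau^{-2}$ and could in principle cancel even when $\gamma_0\neq 0$. Lemma~\ref{lem:DecayExact3} is precisely what rules out such a cancellation for all $d$ simultaneously, except at the single borderline geometry $\psi_0=\pi/2$ together with $\rho_0=0$, and it is crucial that this borderline is already captured by the class $\mathscr{E}$. The remaining work is bookkeeping: tracking the leading orders $N,N_0$ accurately enough to select the correct variant \eqref{eq:IntVanish3} or \eqref{eq:IntVanish4} of Corollary~\ref{Cor:ContraSim}.
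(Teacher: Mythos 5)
Your argument is correct and follows the paper's proof essentially step for step: the contradiction setup via the transmission problem, the CGO solution from Proposition~\ref{prop:CGO} tested in Lemma~\ref{lem:IntCor} with the boundary terms absorbed by Lemma~\ref{lem:RapidDecay}, the case split $N_0\ge N$ versus $N_0=0$, $N=1$ from Lemma~\ref{lem:TaylorvV}, and the confrontation of Corollary~\ref{Cor:ContraSim} with Lemmas~\ref{lem:DecayExact} and~\ref{lem:DecayExact3}. The only cosmetic deviation is that you obtain both bounds in \eqref{eq:rDecay} from the single $s=1$ CGO estimate via $H^{1,p}\hookrightarrow L^{p}$ on the bounded cone, which happens to be legitimate because $\sigma=3-2/\tilde{p}>1$ there, whereas the paper invokes Proposition~\ref{prop:CGO} separately for $s=0$ and $s=1$.
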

\begin{proof}
	We prove this result by contradiction. Assume, up to a rigid change of coordinates, that $x_0$ locates at the origin.
	Suppose that $u^{\Sc}$ is identically zero outside some bounded ball. Then by unique continuation, $u^{\Sc}$ is zero in $\RR^{2}\setminus\overline{\Omega}$ for any Lipschitz domain $\Omega$  as in Definition~\ref{def:corner} for $a-1$.
	As a consequence, the interior transmission eigenvalue problem \eqref{eq:ITPpde} and \eqref{eq:ITPbc} are satisfied for $u$ the total field and $v=u^{\In}$ the incident field of the scattering problem.
	In particular, the local problem \eqref{eq:TEVPlocal} is satisfied with functions $\gamma$ and $\rho$ as in Definition~\ref{def:corReg} for $\condt$ and $c$, respectively. 
	
	Given $d\in\mathcal{K}'_{\delta}$ with a fixed constant $\delta>0$, for any positive $\tau$ sufficiently large, we can find from  Proposition~\ref{prop:CGO} a solution $w$ to \eqref{eq:PDE} that is of the form \eqref{eq:CGO} with the residual $r$ satisfying 
	\begin{equation}
	\|r\|_{H^{s,p_{s}}}=o(\tau ^{s-2/p_s}),\quad s=0,1,
	\end{equation}
	where $p_0$ and $p_1$ are those constants as specified in Definition~\ref{def:corReg}.
	Let the vector field $\tilde{V}=\tilde{V}(x)=V(\hat{x})|x|^N$ be the first nonzero (the $N$-th) term 
	from the Taylor expansion of $\nabla u^{\In}$ at the corner. 
	Then we can always write $v=u^{\In}$ in the form \eqref{eq:Expanv0} around the corner. Moreover, our assumptions on vanishing/nonvanishing of  $u^{\In}$ and $\nabla u^{\In}$ at the corner imply $N_0\ge N$, according to Lemma~\ref{lem:TaylorvV}. Letting $\gamma_0$ be the constant defined in Definition~\ref{def:corReg} for $\gamma$ or $a$, we denote the integral
	\begin{equation}\label{eq:IntI}
	I:=\condcor_0\int_{\mathcal{C}_{\epsilon}} \tilde{V}(x)\cdot \eta e^{\eta\cdot x} \,dx 
	.
	\end{equation}	
	Then we know from \eqref{eq:IntVanish3} in Corollary~\ref{Cor:ContraSim} that 
	$I=o(\tau^{1-n-N})$, where $n=2$ is the space dimension. However, Lemma~\ref{lem:DecayExact} implies that $I=\gamma_0 C_0\tau^{1-n-N}+o(\tau e^{-\epsilon \tau/2})$. These two asymptotics can not both be true unless $C_0=0$, namely, when \eqref{eq:Nexept} holds. 
\end{proof}

\begin{rem}\label{rem:Thm5.1-2}
	In Theorem \ref{thm:cornerscatt} we are confined with the case when $u^{\In}(x_0)=0$ or when $u^{\In}(x_0)\neq0$ and $\nabla u^{\In}(x_0)\neq0$. In fact, the complementary situation, i.e., when $u^{\In}(x_0)\neq 0$ and  $\nabla u^{\In}(x_0)=0$, can be dealt with by similar arguments. 
	In this case, we have $N=1$ and $N_0=1$. As a counterpart of \eqref{eq:IntI}, we will have 
	\begin{equation*}
	\abs{\condcor_0\int_{\mathcal{C}_{\epsilon}} \tilde{V}(x)\cdot \eta e^{\eta\cdot x} \,dx -k^2\rho_0 v_0 \int_{\mathcal{C}_{\epsilon}} e^{\eta\cdot x}dx}
		=o(\tau^{-n}).
	\end{equation*}
	However, Lemma~\ref{lem:DecayExact3} implies that this cannot be true and hence $u^{\In}$ is always scattered, except (perhaps) for some specific cases. In particular, if $\psi_0=\pi/2$, then the only possible nonscattering case is when $c(x_0)=a(x_0)$; and if $\psi_0\neq\pi/2$, the only possible nonscattering case is when $c(x_0)\neq a(x_0)$ and $u^{\In}$ takes a specific form depending  on $\psi_0$ and $\rho_0/\gamma_{0}$ which can be derived from \eqref{eq:TaylorVfirst}. 
\end{rem}

\noindent
	Next, we give some consequent results of Theorem~\ref{thm:cornerscatt}, which shows that for the case when $\psi_0=\pi/2$ a wide class of incident waves of interest in applications always scatter.
\begin{cor}
	Assume that the constitutive material properties $a$ and $c$  satisfy the assumptions in Theorem~\ref{thm:cornerscatt}.  Then, for the right corner  $\psi_0=\pi/2$, any incident field $u^{\In}$ that satisfies  either one of the following conditions:
	\begin{enumerate}
		\item $u^{\In}(x_0)\neq 0$ and $\nabla u^{\In}(x_0)\neq 0$,
		\item $u^{\In}(x_0)\neq 0$, $\nabla u^{\In}(x_0)= 0$ and in addition, $a(x_0)\neq c(x_0)$,
	\end{enumerate}
	must scatter.	
\end{cor}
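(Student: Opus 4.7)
The plan is to reduce each sufficient condition to a situation already covered by Theorem~\ref{thm:cornerscatt}, verifying non-membership in the exceptional class $\mathscr{E}$ whenever possible, and otherwise replaying the inner step of the theorem's proof that uses Lemma~\ref{lem:DecayExact3}. Let $N$ and $N_0$ denote the leading orders of the Taylor expansions of $\nabla u^{\In}$ and $u^{\In}$ at $x_0$, as in Lemma~\ref{lem:TaylorvV}. For condition (1), $\nabla u^{\In}(x_0)\neq 0$ gives $N=0$; membership in $\mathscr{E}$ would require $0=(l/\psi_0)\pi-1$ for some positive integer $l$, i.e.\ $\psi_0=l\pi$, which is excluded by $\psi_0\in(0,\pi)$ and $l\ge 1$. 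Hence $(u^{\In},\psi_0)\notin\mathscr{E}$ and Theorem~\ref{thm:cornerscatt} applies immediately.

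For condition (2), the assumption $u^{\In}(x_0)\neq 0$ forces $N_0=0$. By Lemma~\ref{lem:TaylorvV} the only possibilities are $N=0$ (reducing to condition (1), just handled) or the exceptional pairing $N_0=0$, $N=1$. In the latter case, $(u^{\In},\psi_0)\in\mathscr{E}$ would require $1=(l/\psi_0)\pi-1$ for some positive integer $l$, i.e.\ $\psi_0=l\pi/2$; together with $\psi_0\in(0,\pi)$ this forces $\psi_0=\pi/2$. Consequently, if $\psi_0\neq\pi/2$ the pair again falls outside $\mathscr{E}$ and Theorem~\ref{thm:cornerscatt} produces scattering.

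The only remaining configuration, which I expect to be the main obstacle, is $N_0=0$, $N=1$, $\psi_0=\pi/2$ together with $\rho_0\neq 0$. Since the pair may now lie in $\mathscr{E}$, I cannot invoke Theorem~\ref{thm:cornerscatt} as a black box. Instead I would replay the $N_0=N-1$ branch of its proof: assuming non-scattering for contradiction, combine Proposition~\ref{prop:CGO} with the identity in Lemma~\ref{lem:IntCor}, and apply the estimate \eqref{eq:IntVanish4} from Corollary~\ref{Cor:ContraSim} to conclude
\[
\Bigl|\,\condcor_0\int_{\mathcal{C}_{\epsilon}}\tilde V(x)\cdot\eta\, e^{\eta\cdot x}\,dx-k^2\rho_0 v_0\int_{\mathcal{C}_{\epsilon}}e^{\eta\cdot x}\,dx\Bigr|=o(\tau^{-2}).
\]
On the other hand, Lemma~\ref{lem:DecayExact3} identifies the leading asymptotics of exactly this combination as $\widetilde{C}_1\tau^{-2}+o(\tau e^{-\epsilon\tau/2})$, and guarantees that $\widetilde{C}_1\neq 0$ for some admissible direction $d\in\mathcal{K}'_\delta$ whenever one of $\psi_0\neq\pi/2$ or $\rho_0\neq 0$ holds. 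With $\rho_0\neq 0$, choosing such a $d$ contradicts the $o(\tau^{-2})$ bound and finishes the proof. No new analytic input is needed beyond what is already developed; the real subtlety is combinatorial, namely aligning the Taylor indices $N$, $N_0$ with the aperture condition defining $\mathscr{E}$.
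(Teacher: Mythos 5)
Your proof is correct and follows essentially the same route as the paper's: reduce condition (1) to $N=0$ (where the exceptional aperture condition is vacuous, which is exactly the content of Remark~\ref{rem:DecayExactN0}), and for condition (2) use Lemma~\ref{lem:TaylorvV} to force $N\in\{0,1\}$ and then, in the $N_0=0$, $N=1$ case, fall back on the $N_0=N-1$ branch of the proof of Theorem~\ref{thm:cornerscatt}, where Lemma~\ref{lem:DecayExact3} shows that non-scattering requires both $\psi_0=\pi/2$ and $\rho_0=0$. The only difference is organizational: you first test membership in $\mathscr{E}$ so as to use the theorem as a black box wherever possible, while the paper invokes the theorem's proof directly for the $N=1$ sub-case; the underlying lemmas and case analysis are identical.
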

\begin{proof}
	We adopt the notations in the proof of Theorem~\ref{thm:cornerscatt}.
	The first condition is equivalent to $N_0=N=0$. Recalling Definition~\ref{defn:Admissible} for the case of $N=0$ and $\psi_0=\pi/2$, then Theorem~\ref{thm:cornerscatt} already implies that $u^{\In}$ is always scattered. When the second condition is true, we have $N_0=0$ and $N=1$. Then from the discussion in Remark~\ref{rem:Thm5.1-2} we have that $u^{\In}$ is always scattered unless, perhaps, when $c(x_0)=a(x_0)$.
\end{proof}

\begin{thm}[Potential corner scattering]\label{thm:cornerscattPot}
	Given $\condt, c\in L^{\infty}(\RR^2)$ satisfying \eqref{eq:gammEllip} and \eqref{eq:gammBound}, let $u=u^{\In}+u^{\Sc}$ be the total field of the scattering problem \eqref{eq:MainGov1}-\eqref{eq:Radiat}.	
	Suppose that there is a corner $x_0=[x_0; \mathcal{C}_{\epsilon}(x_0)]$ at the support of $c-1$ which is regular with respect to $a$ and $c$ in the sense of Definition~\ref{def:corReg}. 
	Let $\gamma$ be the function as in Definition~\ref{def:corReg} corresponding to $\condt$. Assume further that there is a potential jump for $c$ at $x_0$.		
	Then the scattered field $u^{\Sc}$ cannot be trivially zero in the exterior of any bounded ball in $\RR^2$ if any of the following conditions is satisfied:
	\begin{enumerate}
		\item For all $x\in \mathcal{C}_{\epsilon}(x_0)$ and some constant $\sigma>0$
		\begin{equation}\label{eq:gamma2}
		\pare{\gamma(x)-1}\gamma^{-1/2}(x)=O(|x-x_0|^{2+\sigma}).
				\end{equation}
		\item For all $x\in \mathcal{C}_{\epsilon}(x_0)$ and some constant $\sigma>0$
		\begin{equation}\label{eq:gamma1}
		\pare{\gamma(x)-1}\gamma^{-1/2}(x)=O(|x-x_0|^{1+\sigma}),
		\end{equation}
		and  $N_0=N$, where $N_0$ and $N$ are the degrees of the first nonzero term from the Taylor expansion of $u^{\In}$ and $\nabla u^{\In}$, respectively, at the corner. 
		\item For all $x\in \mathcal{C}_{\epsilon}(x_0)$ and some constant $\sigma>0$
		\begin{equation}\label{eq:gamma0}
		\pare{\gamma(x)-1}\gamma^{-1/2}(x)=O(|x-x_0|^{\sigma}),
				\end{equation}
(i.e. $\gamma_0=0$, where $\gamma_0$ is defined in Definition~\ref{def:corReg}), and $u^{\In}(x_0)\neq 0$ and $\nabla u^{\In}(x_0)=0$. 
\end{enumerate}
\end{thm}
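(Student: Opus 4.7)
The plan is to argue by contradiction, in the same spirit as the proof of Theorem \ref{thm:cornerscatt}, but now driving the contradiction through the potential contribution rather than the conductivity contribution. Assume, after translating $x_0$ to the origin, that $u^{\Sc}\equiv 0$ outside some ball. By unique continuation and \eqref{eq:gammBound}, the transmission eigenvalue problem \eqref{eq:ITPpde}--\eqref{eq:ITPbc} holds on any admissible $\Omega$ and hence \eqref{eq:TEVPlocal} holds on $\mathcal{C}_\epsilon$ with the regularizations $\gamma,\rho$ supplied by Definition~\ref{def:corReg}. For each $d\in\mathcal{K}'_\delta$ and $\tau$ large, Proposition~\ref{prop:CGO} furnishes a CGO solution $w$ of \eqref{eq:PDEomega} of the form \eqref{eq:CGO} whose residual $r$ meets the decay \eqref{eq:rDecay}. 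Plugging $w$ into the identity \eqref{eq:IntCor} and expanding $v=u^{\In}$ via Lemma~\ref{lem:TaylorvV}, write $v(x)=\tilde{v}(\hat{x})|x|^{N_0}+O(|x|^{N_0+\sigma})$ and $\nabla v(x)=V(\hat{x})|x|^N+O(|x|^{N+\sigma})$ with $N\leq N_0\leq N+1$ except in the case $N_0=0$, $N=1$.

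With $\alpha_0=N_0$, $\beta_0=0$, $\alpha=N$ and $\beta$ chosen according to the case, apply Proposition~\ref{prop:Contra}. Since $\rho_0\neq 0$ by hypothesis, the potential contribution $-k^2\rho_0\int_{\mathcal{C}_\epsilon}\tilde{v}(\hat x)|x|^{N_0}e^{\eta\cdot x}\,dx$ produces, by Lemma~\ref{lem:DecayExactv}, a leading term of order $\tau^{-n-N_0}$ with coefficient $C_{1,N_0}(d)$ which is not identically zero on any open set of directions. The goal in each of the three cases is to show that every remaining term on the right-hand side of \eqref{eq:IntVanish}, as well as the boundary contribution in Lemma~\ref{lem:RapidDecay}, is of strictly smaller order in $\tau$; choosing $d\in\mathcal{K}'_\delta$ with $C_{1,N_0}(d)\neq 0$ then yields the contradiction.

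For Case~1, one takes $\beta=2$ and $\gamma_\beta\equiv 0$ in Lemma~\ref{lem:ContraV}, absorbing the full $O(|x|^{2+\sigma})$ contrast into the remainder. The conductivity contribution is then bounded by $O(\tau^{1-n-N-2-\sigma})=O(\tau^{-n-N-1-\sigma})$, which is dominated by $\tau^{-n-N_0}$ since $N_0\leq N+1<N+1+\sigma$. Case~2 proceeds analogously with $\beta=1$ and uses $N_0=N$ to compare $\tau^{-n-N}$ against the conductivity bound $O(\tau^{-n-N-\sigma})$. For Case~3, the hypotheses $\gamma_0=0$, $u^{\In}(x_0)\neq 0$ and $\nabla u^{\In}(x_0)=0$ force, by Lemma~\ref{lem:TaylorvV}(2), the exceptional configuration $N_0=0$, $N=1$; here $\tilde{v}\equiv v_0\neq 0$ and Lemma~\ref{lem:DecayExact2} gives the explicit leading asymptotic $\int_{\mathcal{C}_\epsilon}e^{\eta\cdot x}\,dx=C_1\tau^{-n}+o(\tau e^{-\epsilon\tau/2})$ with $C_1(d)\neq 0$ generically, while the conductivity integral is only $O(\tau^{1-n-1-\sigma})=O(\tau^{-n-\sigma})$.

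The main technical obstacle is the bookkeeping in Case~3: one must carefully handle the exceptional branch of Lemma~\ref{lem:TaylorvV}(2), where $V_1$ fails to be divergence-free and satisfies $\nabla\cdot V_1=-k^2 v_0\neq 0$. Nevertheless, because $\gamma_0=0$ in this case, the conductivity-driven asymptotic from Lemma~\ref{lem:DecayExact3} vanishes identically to leading order, and only the pure potential term, captured by Lemma~\ref{lem:DecayExact2}, survives at order $\tau^{-n}$ to produce the contradiction. A secondary subtlety is ensuring that the choice of direction $d\in\mathcal{K}'_\delta$ can simultaneously be made to keep the leading constant $C_{1,N_0}(d)$ (respectively $C_1(d)$ in Case~3) nonzero, which is precisely the content of the non-vanishing statement in Lemma~\ref{lem:DecayExactv}.
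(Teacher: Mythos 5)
Your proposal is correct and follows essentially the same route as the paper: argue by contradiction, expand $v=u^{\In}$ and $\nabla u^{\In}$ via Lemma~\ref{lem:TaylorvV}, isolate the potential term $k^2\rho_0\int_{\mathcal{C}_\epsilon}v_{N_0}e^{\eta\cdot x}\,dx$ whose leading order $\tau^{-n-N_0}$ is nontrivial by Lemma~\ref{lem:DecayExactv}, and use Proposition~\ref{prop:Contra} with $\beta=2,1,0$ (and $\gamma_\beta\equiv 0$) in the three cases to show everything else is $o(\tau^{-n-N_0})$. The only cosmetic deviation is your appeal to Lemma~\ref{lem:DecayExact3} in Case~3, which is superfluous since with $\gamma_0=0$ the conductivity contribution is already absorbed into the $O(\tau^{-n-\sigma})$ remainder, exactly as the paper does.
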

\begin{rem}
	We note here that the conditions \eqref{eq:gamma2}, \eqref{eq:gamma1} or \eqref{eq:gamma0}, essentially describe the order  of vanishing  at the corner of  $\gamma-1$, or in other words  of the contrast $a-1$ at the corner. {As a consequence, Theorem~\ref{thm:cornerscattPot}, in particular in the case of \eqref{eq:gamma2}, generalizes} the previous results proven in \cite{BPS14}, \cite{ElH15} and \cite{ElH18} for the scattering problem where $a\equiv 1$. 
\end{rem}
\begin{proof}[Proof of Theorem~\ref{thm:cornerscattPot}]
	We first follow the proof of Theorem~\ref{thm:cornerscatt} and the notations therein up to \eqref{eq:IntI}. 
	Then we let the $v_{N_0}(x)=\tilde{v}(x)|x|^{N_0}$ be the first nonzero (the $N_0$-th) term from the Taylor expansion of $u^{\In}$, and let $\rho_0\neq 0$ be the constant defined in Definition~\ref{def:corReg} for $\rho$ or $c$. Lemma~\ref{lem:DecayExactv} implies that one can alway find a direction $d\in\mathcal{K}'_{\delta}$ and a constant $C_{1,N_0}\neq 0$, which satisfy 
	\begin{equation}\label{eq:vanish0}
	\tilde{I}:=k^2\rho_0\int_{\mathcal{C}_{\epsilon}} v_{N_0}(x)e^{\eta\cdot x} dx
	=C_{1,N_0}k^2\rho_0\tau^{-n-N_0}+o\pare{\tau e^{-\epsilon \tau /2}}.
	\end{equation}
	On the other hand, applying Proposition~\ref{prop:Contra}, in particular the estimate \eqref{eq:IntVanishpho} with $\beta_2=0$, we obtain 
	\begin{equation}\label{eq:vanish}
	\begin{split}
	\tilde{I}=
	o\pare{\tau^{-n-N_0}}
	+\|\condcor_{\beta}\|_{L^{\infty}(\mathcal{K})}\,O\big(\tau^{-n-(N-1+\beta_1)}\big)
	+o\big(\tau^{-n-(N-1+\beta_1)}\big),
	\end{split}	
	\end{equation}
	where the function $\condcor_{\beta}$ and the constant $\beta_1$ are chosen for $\gamma$ to satisfy \eqref{eq:ExpanG}.
	Recall from Lemma~\ref{lem:TaylorvV} that $N_0\ge N$ or $N_0=0$ and $N=1$.

\noindent
	In the first case when \eqref{eq:gamma2} holds true, we can take $\beta_1=2$ and $\gamma_{\beta}\equiv 0$ in \eqref{eq:ExpanG}. Since $N\le N_0+1$, we obtain from \eqref{eq:vanish} that $\tilde{I}=o\pare{\tau^{-n-N_0}}$, which contradicts \eqref{eq:vanish0}.
	When the second condition is valid, namely, if $N_0=N$ and there holds \eqref{eq:gamma1}, then setting $\beta_1=1$ and $\gamma_{\beta}\equiv 0$ in \eqref{eq:ExpanG} we arrive at the same contradiction $\tilde{I}=o\pare{\tau^{-n-N_0}}$ against \eqref{eq:vanish0}.
	Lastly in the third case, we have $N_0=0$ and $N=1$. Taking $\beta_1=0$ and $\gamma_{\beta}=\gamma_0= 0$ in \eqref{eq:ExpanG} lead to the same contradiction as before. The proof is completed.
\end{proof}

{We end  this section with a remark on the ``exclusive'' corners and incident waves in Theorems~\ref{thm:cornerscatt} and \ref{thm:cornerscattPot}. As seen from the statement of these results, in the most general settings, there are particular conductivity or potential corners and related special incident fields for which we cannot conclude  that the corresponding scattered field is non-zero. At this time we don't know whether these exceptions are artifact of our technique or a  more fundamental issue arising from the presence of the contrast in conductivity near the corner, as we were not able to construct a counter example of a non-scattering corner along with the corresponding incident field.}

\section{Applications to inverse scattering for inhomogeneous media}\label{applic}
In this section we present some applications of the above corner scattering results to  inverse scattering theory for two dimensional inhomogeneous media. More precisely we consider the scattering problem (\ref{eq:MainGov1}) with $n=2$, where the constitutive material properties $a$ and $c$  defined in the beginning of Section \ref{form}.
\subsection{A global uniqueness theorem}\label{unique}
{We consider the inverse problem of determining the convex hull of (the support of) an inhomogeneity from the scattered data. We prove that the polygonal convex hull of certain inhomogeneities can be uniquely determined by a single far-field measurement.} 
Our uniqueness result 
extends the ones proven in \cite{ElH18} and \cite{HSV16}. 
We start by defining the admissible set of the inhomogeneities. 
\begin{defn}[Admissible inhomogeneities] {Given constitutive material properties  $a\in W^{1,\infty}(\RR^2)$,  $c\in L^\infty(\RR^2)$, $a(x)\geq a_0>0$, and $D$ the convex hull of $\supp(c-1)\cup \supp(a- 1)$, the inhomogeneity $(a,c,D)$ is called  {\em admissible} if it satisfies the following properties:
\begin{enumerate}
\item The convex hull $D$ is a polygon. 
\item Each corner $x_0$ of the polygon $D$ is a corner for $c-1$ as in  Definition \ref{def:corner}  (which may or may not be a corner for $a-1$); in particular, there exist a cone $\mathcal{C}(x_0)$ of aperture $\psi_0$ and a constant $\epsilon>0$ such that 
$$\mathcal{C}_\epsilon=\mathcal{C}_\epsilon(x_0):=\mathcal{C}(x_0)\cap  B_{\epsilon}(x_0)=\supp (c-1) \cap  B_{\epsilon}(x_0).$$
\item At each corner $x_0$ of $D$, there exist constant $\varepsilon_0:=\varepsilon_{0,x_0}>0$ and functions $\gamma=\gamma_{x_0}\in H^{3,1+\varepsilon_0}(\RR^n)\cap L^{\infty}(\RR^n)$ and $\rho=\rho_{x_0}\in H^{1,1+\varepsilon_0}(\RR^n)\cap L^{\infty}(\RR^n)$ satisfying $$a|_{{\mathcal C}_{\epsilon}}=\chi_{{\mathcal C}_{\epsilon}} \gamma \quad\text{and}\quad  c|_{{\mathcal C}_{\epsilon}}=\chi_{{\mathcal C}_{\epsilon}} \rho,$$  
where $\chi_{{\mathcal C}_{\epsilon}}$ denotes the characteristic function of the set ${\mathcal C}_\epsilon$. 
Moreover, there are constants $\sigma=\sigma_{x_0}>0$ and $\rho_0:=\rho_{0,x_0}\neq 0$ such that for almost all $x\in \mathcal{C}_{\epsilon}(x_0)$,
\begin{equation}\label{eq:adm_inh}
(\rho(x)-1)=\rho_0+O(|x-x_0|^{\sigma}) \quad \mbox{and}\quad(\gamma(x)-1)=O(|x-x_0|^{2+\sigma}).
\end{equation}
\end{enumerate}
We denote by ${\mathcal A}$ the set of admissible inhomogeneities $(a,c,D)$  (see Figure \ref{adm} for some examples).}
\end{defn}
\begin{figure}[h]
\centerline{\begin{tabular}{ccc}
\includegraphics[width=0.20\textwidth]{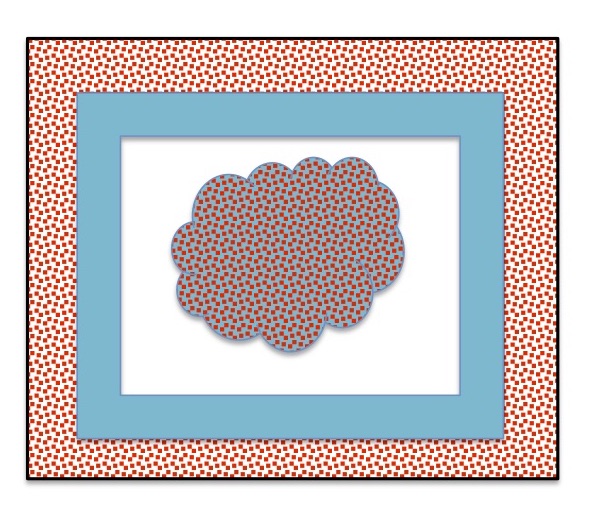} \;\; &\;\;
\includegraphics[width=0.23\textwidth]{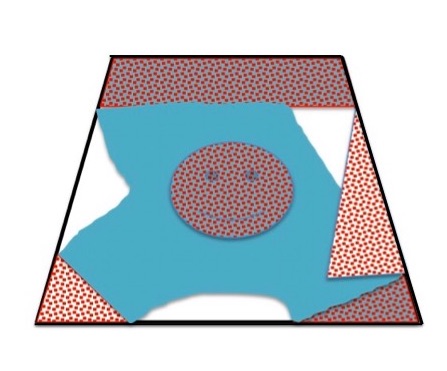}\;\; & \;\; \includegraphics[width=0.18\textwidth]{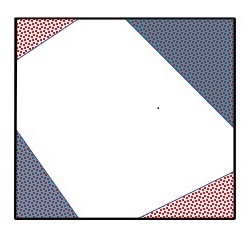}
\end{tabular}}
\caption{Examples of admissible  inhomogeneities. Dotted filling  indicates $\supp(c-1)$, uniform coloring  indicates $\supp(a-1)$ darker dotted filling indicates the  support of $\supp(c-1)\cap \supp(a- 1)$. One incident field suffices to determine the polygonal convex hull depicted by the tick line.}
\label{adm}
\end{figure}

\noindent
We can prove the following uniqueness theorem. We recall that an incident field  $u^{\In}$ is an entire solution the Helmholtz equation
$$\Delta u^{\In}+k^2u^{\In}=0\quad\mbox{in $\RR^2$}.$$
\begin{thm}\label{th:uni}
Given an admissible inhomogeneity $(a,c,D)\in {\mathcal A}$. Then the far field pattern $u^\infty$ corresponding to one single incident wave $u^{\In}$ uniquely determines the convex hull $D$ of $\supp(c-1)\cup \supp(a- 1)$.
\end{thm}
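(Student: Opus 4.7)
The plan is a proof by contradiction. Assume two admissible inhomogeneities $(a_1,c_1,D_1),(a_2,c_2,D_2)\in\mathcal{A}$ generate the same far field $u^{\infty}$ for the same nontrivial incident wave $u^{\In}$, but $D_1\neq D_2$. By Rellich's lemma together with the radiation condition \eqref{eq:Radiat}, the two scattered fields coincide in the unbounded component of $\RR^2\setminus(\overline{D_1}\cup\overline{D_2})$, and hence, by unique continuation, throughout $\RR^2\setminus(\overline{D_1}\cup\overline{D_2})$; in particular $u_1=u_2$ there. Since $D_1$ and $D_2$ are convex polygons with $D_1\neq D_2$, a short convexity argument---if every vertex of $D_1$ lay in $D_2$ and every vertex of $D_2$ in $D_1$ then $D_1=D_2$---produces, after relabeling, a vertex $x_0$ of $D_1$ with $x_0\notin\overline{D_2}$.

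Next I would localize around $x_0$: pick $\epsilon>0$ so small that $\overline{B_\epsilon(x_0)}\cap\overline{D_2}=\emptyset$ and $D_1\cap B_\epsilon(x_0)=\mathcal{C}_\epsilon(x_0)$, the local cone guaranteed by Definition~\ref{def:corner}. In $B_\epsilon(x_0)$ the field $u_2$ solves $\Delta u_2+k^2u_2=0$ since $a_2=c_2=1$ there, while in $\mathcal{C}_\epsilon(x_0)$ the field $u_1$ solves $\nabla\cdot a_1\nabla u_1+k^2c_1u_1=0$; moreover $u_1=u_2$ in $B_\epsilon(x_0)\setminus\overline{\mathcal{C}_\epsilon(x_0)}$. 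The $H^1_{loc}$ regularity of $u_1$ together with the weak form of \eqref{eq:MainGov1} then give the transmission conditions $u_1=u_2$ and $a_1\partial_\nu u_1=\partial_\nu u_2$ on $\partial\mathcal{C}_\epsilon(x_0)\setminus\mathcal{K}_\epsilon$. Hence the local problem \eqref{eq:TEVPlocal} holds with $u=u_1$ and $v=u_2$, so Lemma~\ref{lem:IntCor} applies at $x_0$.

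With the local transmission problem in place, I would replay the argument of Theorem~\ref{thm:cornerscattPot}, case~1, in this setting. Construct CGO solutions $w=w_\tau$ via Proposition~\ref{prop:CGO}, let $v_{N_0}$ denote the first non-vanishing term of the Taylor expansion of $u_2$ at $x_0$, and use Lemma~\ref{lem:DecayExactv} to select a direction $d\in\mathcal{K}'_\delta$ for which the constant $C_{1,N_0}$ is nonzero. Combining Proposition~\ref{prop:Contra} with the second-order vanishing $(\gamma-1)\gamma^{-1/2}=O(|x-x_0|^{2+\sigma})$ coming from \eqref{eq:adm_inh} produces $k^2\rho_0\int_{\mathcal{C}_\epsilon(x_0)}v_{N_0}(x)e^{\eta\cdot x}\,dx=o(\tau^{-n-N_0})$, which contradicts the asymptotic $C_{1,N_0}k^2\rho_0\tau^{-n-N_0}+o(\tau e^{-\epsilon\tau/2})$ of the same integral, given that $\rho_0\neq 0$ (the potential-jump part of admissibility).

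One ancillary verification is that $u_2$ does not vanish identically near $x_0$: otherwise real analyticity of Helmholtz solutions would force $u_2\equiv 0$, whence $u^{\In}=-u_2^{\Sc}$ would satisfy both the source-free Helmholtz equation on $\RR^2$ and the Sommerfeld radiation condition, forcing $u^{\In}\equiv 0$ and contradicting the nontriviality of the incident wave. The principal technical step of the whole argument is the delicate matching of $\tau$-powers between the CGO residual estimates of Proposition~\ref{prop:CGO} and the leading-order asymptotic, but this has already been carried out in the proof of Theorem~\ref{thm:cornerscattPot}; once the localized transmission problem above is set up the contradiction follows with essentially the same bookkeeping, so the genuinely new work here is geometric rather than analytic.
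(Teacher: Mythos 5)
Your proposal is correct and follows essentially the same route as the paper: Rellich's lemma plus unique continuation to identify the total fields outside both hulls, extraction of a vertex of one polygon lying outside the other, reduction to the local transmission problem \eqref{eq:TEVPlocal} at that corner, and then the case-1 argument of Theorem~\ref{thm:cornerscattPot} to force $u_2\equiv 0$ near the corner and hence (via the entire-radiating-solution argument) $u^{\In}\equiv 0$, a contradiction. Your write-up is, if anything, slightly more explicit than the paper's on the convexity step and on the degenerate case where $u_2$ vanishes identically near $x_0$, but these are presentational rather than substantive differences.
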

\begin{proof} Let $(a_j,c_j,D_j)\in {\mathcal A}$, $j=1,2$ be two admissible inhomogeneities, and let $u_j=u_j^{\Sc}+u^{\In}$ and $u^\infty_j\neq 0$ be the  corresponding total field and far field pattern, respectively, due to the incident field $u^{\In}$. Assume that $u_1^\infty(\hat x)=u_2^\infty(\hat x)$ for all $\hat x$ in the unit circle. Then from Rellich's Lemma the scattered fields $u_1^{\Sc}=u_2^{\Sc}$ coincide, and consequently so do the total fields $u_1=u_2$, up to the boundary of $\RR^2\setminus \overline{D_1\cup D_2}$, where we recall $D_1$ and $D_2$ are the (polygon) convex hull  of $\supp(c_1-1)\cup \supp(a_1- 1)$ and $\supp(c_2-1)\cup \supp(a_2-1)$, respectively.  If $D_1\neq D_2$, then there is a corner  $x_0:=[x_0;{\mathcal C}_{\epsilon}]$ for some small $\epsilon>0$ (to fix the idea) of $D_1$ that lies in the exterior of $D_2$  (see Figure \ref{uni}).  Hence, we have that $\nabla\cdot \condt_1 \nabla u_1 + k^2 c_1u_1=0$ in  ${\mathcal C}_{\epsilon}$,  $\Delta u_2+ k^2 u_2=0$ in ${\mathcal B}_{\epsilon}$, and $u_1=u_2$ and $\condt_1\partial_{\nu} u_1=\partial_{\nu}u_2$ at the vertices of ${\mathcal C}_{\epsilon}$.   Since, by assumption of the admissible inhomogeneities, the corner     $x_0:=[x_0;{\mathcal C}_{\epsilon}]$ satisfies the assumption $1$ of Theorem~\ref{thm:cornerscattPot} and hence by exactly the same argument as in the proof of Theorem~\ref{thm:cornerscattPot} we conclude that $u_2\equiv 0$ in $B_\epsilon$ whence  $u_2\equiv 0$ in $\RR^2$, by unique continuation  \cite{horm}. The latter means that  the (radiating) scattered field $u_2^{\Sc}=-u^{\In}$  satisfies the Helmholtz equation in $\RR^2$, therefore $u_2^{\Sc}\equiv 0$ and $u_2^\infty=0$. We arrive at a contradiction, which proves that $D_1=D_2$.
\end{proof}
\begin{figure}[h]
\centerline{
\includegraphics[width=0.25\textwidth]{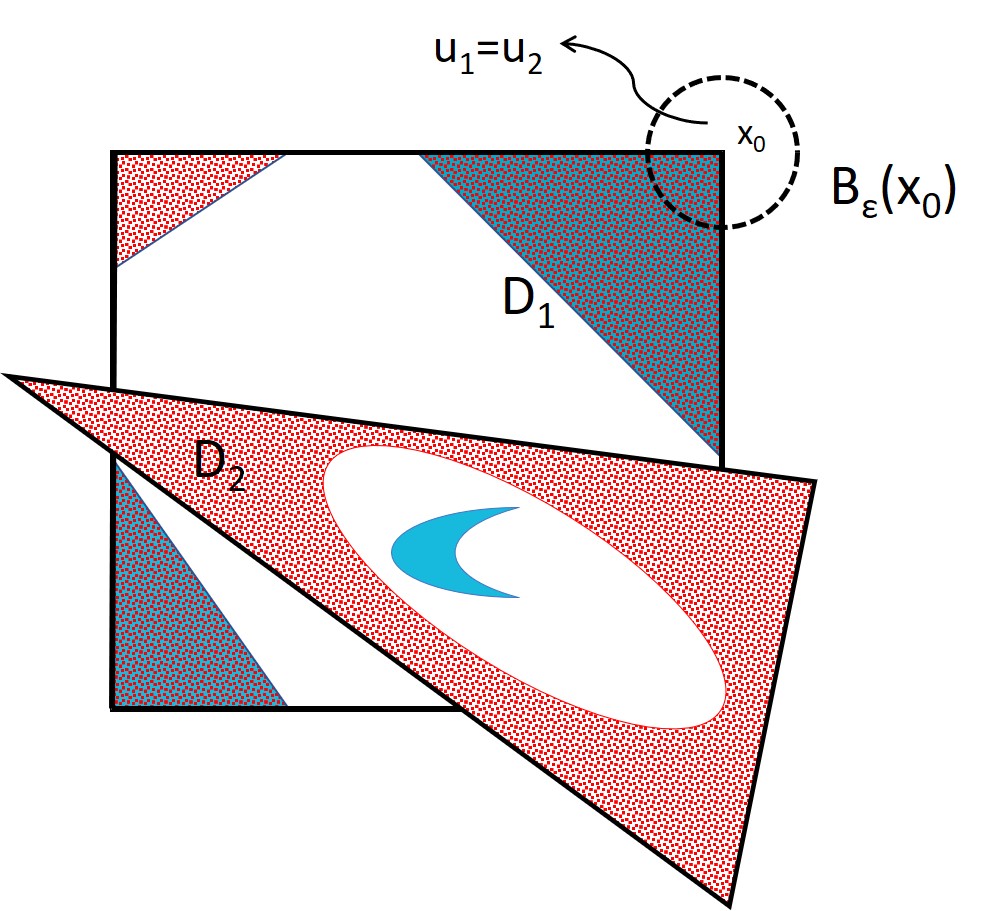}}
\caption{Intersection of two different  admissible inhomogeneities. Dotted filling  indicates the support of the contrast $c\supp(c-1)$, darker dotted filling indicates the  support of $\supp(c-1)\cap \supp(a- 1)$.}
\label{uni}
\end{figure}
Although, for simplicity of the statement, we give the uniqueness result only for inhomogeneities from the admissible class $\mathcal{A}$ as specified in Definition~\ref{def:corReg},  similar shape determination results can be shown for more general class of inhomogeneities. 
	What type of additional inhomogeneities can be included, is easily seen from the proof of Theorem~\ref{th:uni}.
	In particular, the admissible inhomogeneities in Definition~\ref{def:corReg} require the conductivity contrast to vanish to second order at the corners of the convex hull. However, we can enlarge the  admissible class   $\mathcal{A}$ by including also inhomogeneities  whose polygonal convex hull  has corners with conductivity  jump, i.e.  $\gamma$ satisfies \eqref{eq:gamma} with $\gamma_0\neq 0$  assuming in addition  that such corner has  aperture which is an irrational factor of $\pi$. Formally speaking, we can even consider any bounded inhomogeneities. In this case, if the far-field data corresponding to one single incident wave is the same for two  inhomogeneities, then we can conclude that the difference between the two corresponding convex hulls (not necessarily polygons) cannot contain any ``admissible pair of corner and total field'' as specified in Section~\ref{Sec:CornerScat}. Here, by admissible pairs we mean corners and related waves which will always be non-trivially scattered by the corner ($[x_0;{\mathcal C}_{\epsilon}]$ and $u_2$ in the proof of Theorem~\ref{th:uni} for example).

\noindent
As a particular case of  Theorem \ref{th:uni} we have the following uniqueness theorem for the support of a polygonal inhomogeneity.
\begin{cor}
Given an admissible inhomogeneity $(a,c,D)\in {\mathcal A}$, and assume further that $\supp(c-1)\cup \supp(a-1)$  is a convex polygon (i.e.  $\supp(c-1)\cup \supp(a-1)=D$). Then the far field pattern corresponding to a single incident wave uniquely determines the support of the inhomogeneity $\supp(c-1)\cup \supp(a-1)$.
\end{cor}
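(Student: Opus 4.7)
The plan is to obtain this as an immediate consequence of Theorem~\ref{th:uni}, since the extra hypothesis is precisely that the support of the contrast coincides with its own convex hull. First I would observe that, because $\supp(c-1)\cup\supp(a-1)$ is assumed to be a convex polygon, the convex hull $D$ of this set is the set itself; that is,
\begin{equation*}
D=\supp(c-1)\cup\supp(a-1).
\end{equation*}
Next I would invoke Theorem~\ref{th:uni}, which asserts that for any admissible inhomogeneity $(a,c,D)\in\mathcal{A}$, the far field pattern $u^\infty$ corresponding to a single incident wave uniquely determines the polygonal convex hull $D$. Substituting the identification from the previous step would then yield that $u^\infty$ uniquely determines $\supp(c-1)\cup\supp(a-1)$, which is exactly the claim.

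There is no substantive obstacle in this argument: the admissibility hypothesis $(a,c,D)\in\mathcal{A}$ already encodes all the regularity and corner conditions needed to apply Theorem~\ref{th:uni} verbatim, and in particular ensures that at each vertex of $D$ the potential has a nontrivial jump while the conductivity contrast vanishes to second order, so the corner scattering argument underlying Theorem~\ref{th:uni} goes through unchanged. In effect, the corollary is a one-line deduction recording the sharper shape-determination statement available when the underlying inhomogeneity is itself a convex polygon rather than being merely contained in one. If one wished to avoid a trivial-looking proof, the only thing to write out in detail would be the set-theoretic identity $D=\supp(c-1)\cup\supp(a-1)$ under the stated convexity, followed by a citation of Theorem~\ref{th:uni}.
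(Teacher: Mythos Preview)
Your proposal is correct and matches the paper's treatment exactly: the paper states this corollary without proof, introducing it simply as ``a particular case of Theorem~\ref{th:uni},'' which is precisely the one-line deduction you outline.
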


\subsection{Approximation by Herglotz functions}
Most of the reconstruction techniques using the linear sampling methods and transmission eigenvalues depends on denseness properties of  the so-called Herglotz functions, which are entire solutions to the Helmholtz equation defined by 
$$v_g(x):=\int_{{\mathbb S}^{n-1}}g(d) e^{ikx\cdot d}ds_d,\qquad g\in L^2({\mathbb S}^{n-1}),$$
where ${\mathbb S}^{n-1}:=\left\{x\in \RR^n:\, |x|=1\right\}$, and $g$ is referred to as kernel of the Herglotz function $v_g$. 
It is well-known (see e.g. \cite{CakoniColtonHaddar2016}) that the set
$$\{v_g: \, g\in L^2({\mathbb S}^{n-1})\}$$
is dense in 
$$\{v\in {H}^1(\Omega): \, \Delta v+k^2v=0\quad \mbox{in} \; \Omega\}$$
with respect to the ${H}^1(\Omega)$-norm, where  $\Omega\in {\mathbb R}^n$ is a bounded region with connected complement. 

\noindent
Given the inhomogeneity  $(a,c,\Omega)$ defined at the beginning of Section \ref{form}, let $k>0$ be a transmission eigenvalue, i.e. the following problem 
\begin{eqnarray*}
\nabla\cdot \condt \nabla u + k^2 c u=0,&  \Delta v + k^2 v=0,&\quad\mbox{in $\Omega$},\\
u=v,&  \condt\partial_{\nu} u=\partial_{\nu}v,&\quad\mbox{on $\partial\Omega$},
\end{eqnarray*}
has nonzero solution $u,v\in H^1(\Omega)$. Our corner scattering analysis in the two dimensional case yields the following result, which concerns the approximation of the eigenfunction $v$ by Herglotz functions.  To this end,  at a transmission eigenvalue $k>0$, let the sequence of Herglotz functions $\left\{v_{g_\epsilon}\right\}$ approximate the eigenfunction $v$, i.e. 
\begin{equation}\label{eq:approx}
\lim\limits_{\epsilon \to 0}\|v_{g_\epsilon}-v\|_{H^1(\Omega)}=0.
\end{equation}
\begin{lem}\label{approx}
Assume that $\Omega\subset {\mathbb R}^2$ has a corner $x_0=[x_0; \mathcal{C}_\epsilon(x_0)]$  {for  $a-1$ with the assumptions of Theorem \ref{thm:cornerscatt} for corner aperture $\psi_0\notin \{p\pi;\ p\in\mathbb{Q}\cap (0,1)\}$, or for $c-1$ with the assumptions of  Theorem \ref{thm:cornerscattPot} with condition $1$. Let $v$ be an eigenfunction, and let the sequence of Herglotz functions $\left\{v_{g_\epsilon}\right\}$ satisfy \eqref{eq:approx}. Then $\lim\sup\|g_\epsilon\|_{L^2({\mathbb S}^1)}=\infty$.}
\end{lem}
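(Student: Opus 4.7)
The plan is to argue by contradiction, leveraging the non-scattering theorems of Section~\ref{non-scat}. Suppose, towards a contradiction, that $\limsup_{\epsilon\to 0}\|g_\epsilon\|_{L^2(\Ss^1)}<\infty$. Then, along some subsequence, $g_{\epsilon_j}\rightharpoonup g$ weakly in $L^2(\Ss^1)$ for some $g\in L^2(\Ss^1)$. For every fixed $x\in\RR^2$ the map $d\mapsto e^{\im kx\cdot d}$ lies in $L^2(\Ss^1)$, so the weak convergence yields the pointwise limit $v_{g_{\epsilon_j}}(x)\to v_g(x)$ for every $x\in\RR^2$. Combined with the assumed strong convergence $v_{g_\epsilon}\to v$ in $H^1(\Omega)$, which along a further subsequence gives pointwise a.e.\ convergence in $\Omega$, this identifies $v_g|_\Omega = v$ as elements of $H^1(\Omega)$. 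Consequently the eigenfunction $v$ is the restriction to $\Omega$ of the entire Herglotz solution $v_g$ on $\RR^2$, and $v_g\not\equiv 0$ since $v$ is nontrivial.

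Next, regard $u^{\In}:=v_g$ as an incident field for the scattering problem \eqref{eq:MainGov1}--\eqref{eq:Radiat}, and set
\[
\tilde u :=
\begin{cases}
u, & \text{in }\Omega,\\
v_g, & \text{in }\RR^2\setminus\overline{\Omega}.
\end{cases}
\]
The transmission conditions \eqref{eq:ITPbc}, together with $v_g|_\Omega = v$, ensure that $\tilde u\in H^1_{loc}(\RR^2)$ has matching Cauchy data across $\partial\Omega$, while \eqref{eq:ITPpde} yields $\nabla\cdot\condt\nabla\tilde u + k^2 c\,\tilde u = 0$ in all of $\RR^2$. Hence $u^{\Sc}:=\tilde u - u^{\In}$ is (trivially) radiating and vanishes identically outside $\Omega$, i.e.\ the incident field $u^{\In}$ is not scattered by the inhomogeneity $(\condt,c,\Omega)$.

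Finally, this non-scattering phenomenon is precisely what the corner hypothesis rules out. Under the assumption $\psi_0\notin\{p\pi;\ p\in\mathbb{Q}\cap(0,1)\}$, the exclusive class $\mathscr{E}$ in Theorem~\ref{thm:cornerscatt} is empty at this corner (because $N=(l/\psi_0)\pi-1\in\mathbb{N}$ would force $\psi_0/\pi$ to be rational), and Theorem~\ref{thm:cornerscattPot} under its condition~1 admits no exclusive incident fields at all. In either scenario $u^{\In}=v_g$ is forced to produce a nontrivial scattered field, contradicting $u^{\Sc}\equiv 0$ outside $\Omega$; therefore $v_g\equiv 0$, contradicting $v\not\equiv 0$. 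The only delicate point in the argument is the identification $v=v_g$ in $\Omega$ from the weak $L^2$-convergence of the kernels $g_{\epsilon_j}$; this is dispatched by pairing $g_{\epsilon_j}$ with the $L^2(\Ss^1)$-functions $d\mapsto e^{\im kx\cdot d}$, after which the rest of the proof is a clean plug-in into the corner-scattering machinery already developed.
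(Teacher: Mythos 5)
Your proof is correct and follows essentially the same route as the paper's: assume the kernels are bounded, extract a weakly convergent subsequence, identify $v=v_g|_\Omega$, and conclude that the Herglotz wave $v_g$ would be a non-scattered incident field, contradicting Theorems \ref{thm:cornerscatt} and \ref{thm:cornerscattPot}. You merely supply more detail than the paper does (the pointwise identification of the limit, the explicit construction of the unscattered total field, and the verification that the exclusive class $\mathscr{E}$ is empty for irrational apertures), all of which is sound.
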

\begin{proof}
Assume to the contrary that $\left\{\|g_\epsilon\|_{L^2({\mathbb S}^1)}\right\}$ is bounded. Then up to a subsequence $g_\epsilon\rightharpoonup g\in L^2({\mathbb S}^1)$ weakly as $\epsilon \to 0$.   Obviously $v_{g_\epsilon}\to v_g$ in $C^1(\overline{\Omega})$ and thus $v:=v_g|_{\Omega}$ which means that $v_g$ does not scatter. This contradicts the assumptions, and the lemma is proven.
\end{proof}
We remark that  for the case of $a\equiv 1$ in \cite{BlL17} the authors have shown that if  the transmission eigenfunction $v$ is approximated by a sequence of Herglotz functions with certain growing conditions, then $v$ must vanish at the corner. Indeed our analysis for ``potential corner" shows that $v$ has to vanish at any order at the corner and by analyticity be identically zero.  The exceptional cases due to the presence of the contrast $a-1$ stated in Theorem \ref{thm:cornerscatt} and Theorem \ref{thm:cornerscattPot} describe necessary vanishing properties at the corner  of the transmission eigenfunction $v$ if it can be approximated  by a sequence of Herglotz functions with uniformly bounded kernel, which is equivalent to $k$ being a non-scattering wavenumber. However, these are not sufficient conditions 
for the latter to occur.

\section{Conclusions}
We conclude the paper with a few remarks. Firstly, our construction of CGO solutions and their use to study local behavior of solutions of concerning PDEs near the vertex of a generalized corner in any dimension higher than one lays out the needed analytical framework to study corner scattering. Although, here for sake of presentation, the latter is carried out only in two dimensional case,  we strongly believe that the analogue is true for conical corners in dimension three. Moreover, similar techniques are expected to be developed to analyze edge scattering in three dimensions. If proven, such results can then be used to obtain similar uniqueness theorem as in Section \ref{unique} for polyhedral convex hull of the support of inhomogeneity in ${\mathbb R}^3$.  

Secondly, we are  perplexed by the exceptional corners in the case of contrast in conductivity. We don't know yet whether this is a shortcoming of our approach or is a more essential continuation question related to this case.  Unfortunately, for geometries with corners even in ${\mathbb R}^2$ it is hard to get simple explicit calculations for the transmission eigenvalue problem in order to see if for any of  such exceptional corners the eigenfunction corresponding to the equation of the background can be extended outside the corner, i.e. to conclude that  corner does scatter. In order to have a different angle of investigation to this issue, in a forthcoming study, we consider singularity analysis on the pair of the solution to the interior transmission problem near a generalized corner,  following the lines of  \cite{ElH18}. We are hoping to perform this singularity analysis for anisotropic conductivity coefficient also, for which the construction of CGO solutions is more complicated.

\section*{Acknowledgments}
The research of F. Cakoni is partially supported  by the AFOSR Grant  FA9550-20-1-0024 and  NSF Grant DMS-1813492.

\end{document}